\newtheorem{theorem}{Theorem}[section]
\newtheorem{corollary}[theorem]{Corollary}
\newtheorem{proposition}[theorem]{Proposition}
\theoremstyle{definition}
\newtheorem{definition}[theorem]{Definition}
\newtheorem{example}[theorem]{Example}
\newtheorem{remark}[theorem]{Remark}
\numberwithin{equation}{section}
\newcommand{\R}{\mathcal{R}}
\newcommand{\Ra}[2]{\R_{#1,#2}}
\newcommand{\C}{\mathcal{C}}
\newcommand{\Ca}[2]{\C_{#1,#2}}
\newcommand{\M}{\mathcal{M}}
\newcommand{\Ma}[2]{\M_{#1,#2}}
\newcommand{\ma}[1]{\mathcal{O}_{#1}}
\newcommand{\Tma}{\tilde{\mathcal{O}}}
\newcommand{\K}{\mathcal{K}}
\newcommand{\Ka}[2]{\K_{#1,#2}}
\newcommand{\Pc}{\mathcal{P}}
\newcommand{\Sc}[1]{\mathcal{S}_{#1}}
\newcommand{\T}{\mathcal{T}}
\newcommand{\Ta}[2]{\T_{#1,#2}}
\newcommand{\OO}{\overline{0}}
\DeclareMathOperator{\diag}{{\it diag}}
\DeclareMathOperator{\h}{{\it horiz}}
\DeclareMathOperator{\di}{{div}}
\begin{document}

\title[Knight's Tours on Non-Orientable Surfaces]
{Nullhomotopic and Generating Knight's Tours on Non-Orientable Surfaces}

\author{Bradley Forrest}
\address{101 Vera King Farris Drive; Galloway, NJ 08205}
\email{bradley.forrest@stockton.edu}

\author{Zachary Lague}
\email{zml2020@nyu.edu}

\subjclass[2020]{Primary 05C45, Secondary 57M15, 57M10, 57M05}

\keywords{Knight's Tour, Non-Orientable Surfaces, Fundamental Groups} 

\begin{abstract}
We investigate closed knight's tours on M\"{o}bius strip and Klein bottle chess boards.  In particular, we characterize the board dimensions that admit tours that are nullhomotopic and the board dimensions that admit tours that realize generators of the fundamental groups of each of the surfaces.
\end{abstract}

\maketitle

\section{\bf Introduction}
The knight's tour problem is an ancient and classic graph theoretic puzzle inspired by the knight in chess.  Chess is traditionally played on a square board made up of 64 smaller squares laid out in an $8 \times 8$ pattern. The knight moves in an ``L'' shape, moving 2 squares in one of the four grid directions and 1 square in a perpendicular direction.  The knight's tour puzzle challenges the solver to move the knight to each of the 64 squares without repeating any square.  Historically, a full solution is required to be a \textit{closed} tour, a tour that ends with the knight one move away from its starting square.  

The traditional knight's tour problem is well-studied, as are many of its generalizations \cite{yang-zhu-jiang-huang}, \cite{chia-ong}, \cite{demaio}, \cite{demaio-mathew}, \cite{erde-golenia-golenia}. Most relevantly, several authors have characterized the dimensions of various surfaces that admit knight's tours.  In particular, Schwenk characterized the dimensions of rectangular boards that admit a knight's tour \cite{schwenk}. Watkins and Hoenigman extended this result and characterized the dimensions of tori that admit knight's tours \cite{watkins-hoenigman}, and Watkins later established the analogous results for cylinders, M\"{o}bius strips, and Klein bottles \cite{watkins}.  Narrowing the collection of allowed tours, Forrest and Teehan characterized the dimensions of cylinders and tori that admit nullhomotopic tours and the dimensions that admit tours that are homotopic to a generator of the fundamental group of the surface \cite{forrest-teehan}.  In this work, we extend on the results of Forrest and Teehan and characterize the analogous dimensions of M\"{o}bius strips and Klein bottles.  

More concretely, we study knight's tours on $m \times n$ M\"{o}bius strips $M$ and $m \times n$ Klein bottles $K$.  These are both quotients of the rectangle with width $m$ and height $n$, which we will consider to be tiled by $mn$ squares with unit side length. Specifically, $M$ is obtained by twisting the top edge of the rectangle and identifying it with the bottom edge.  This is the M\"{o}bius strip of width $m$ whose boundary circle has length $2n$.  Similarly, $K$ is the quotient of $M$ obtained by identifying the left and right edges of the rectangle without twisting. For each of these surfaces, we construct a graph with a vertex for each unit square in the tiling of the surface and an edge between two squares if the knight can move from one square to the other.  For most board dimensions, this is a graph; however, we include an edge for each distinct move that the knight could make between the two squares, so for small values of $m$ and $n$, this construction can produce a pseudograph or multigraph.  These graphs map naturally into $M$ and $K$.  

By choosing a basepoint vertex $v$ in these graphs, directed knight's tours beginning at $v$ determine elements of $\pi_1(M,v)$ and $\pi_1(K,v)$.  In this work, we characterize the values of $m$ and $n$ that admit knight's tours of the following types:
\begin{itemize}
\item Knight's tours that realize the identity in $\pi_1(M,v)$ or $\pi_1(K,v)$.  We call these {\it nullhomotopic tours}.
\item Knight's tours that realize a generator in $\pi_1(M,v)$. We call these {\it generating tours}.
\item Knight's tours that realize an element of $\pi_1(K,v)$ that is the image of a generator of fundamental group of the cylinder under the homomorphism induced by the standard quotient map. We call these {\it cylindrical tours}.  
\item Knight's tours that realize an element of $\pi_1(K,v)$ that is the image of a generator of fundamental group of the M\"{o}bius strip under the homomorphism induced by the standard quotient map. We call these {\it M\"{o}bius tours}.
\end{itemize}

These characterizations are given in Theorems \ref{thm:MSNH}, \ref{thm:MSGen}, \ref{thm:KBNull}, \ref{thm:KBCyl}, and \ref{thm:KBMob} respectively.

This paper is organized as follows:  In Section 2, we define the surfaces and graphs that we use throughout this work and review foundational graph theoretic results.  Section 3 introduces covering space theory and applies it to establish an important technical result that enables widening knight's tours on M\"{o}bius strips.  Sections 4 and 5 classify the dimensions of M\"{o}bius strip boards that admit nullhomotopic and generating tours respectively. Section 6 extends these results to Klein bottle boards by classifying the dimensions of Klein bottles that admit nullhomotopic tours, the dimensions that support cylindrical tours, and the dimensions that support M\"{o}bius tours.  Section 7 discusses potential future

\section{\bf Background on Graphs and Surfaces}

In this section, we will define the surfaces, graphs, and maps that are relevant to our work.  This includes reviewing results of Schwenk, Watkins, and Ralston.  We follow the discussion presented in \cite[Section 2]{forrest-teehan} closely, though with some details suppressed for brevity; see that work for a more thorough discussion.

\subsection{Surfaces}

We will use four different compact surfaces in this work: rectangles, cylinders, M\"{o}bius strips, and Klein bottles, each with dimensions $m \times n$. These surfaces are all quotients of the rectangle which we coordinatize by the set of ordered pairs of real numbers $(x,y)$ where $x \in [0,m]$ and $y \in [0,n]$.  The cylinder $C$ is the quotient obtained by identifying $(0,y)$ and $(m,y)$ for each $y \in [0,n]$, while the M\"{o}bius strip $M$ is given by identifying $(x,0)$ and $(m-x,n)$ for each $x \in [0,m]$.  We obtain the Klein bottle $K$ by performing both of these identifications.  Additionally, we will use two non-compact surfaces; let $P$ be the Cartesian plane and $S$ be the subset of the Cartesian plane between the lines $x=0$ and $x=m$. Note that $S$ is the universal cover of $M$ and $P$ is the universal cover of $K$.  The covering maps $p_M \colon S \to M$ and $p_K \colon P \to K$ are both given by $(x,y) \mapsto ((-1)^{y \di n}x \mod m,y \mod n)$.  

Each of these six surfaces can be tiled by unit squares. We take the compact surfaces to be tiled by $mn$ unit squares and the non-compact surfaces to be tiled by infinitely many unit squares. The sides of each square are parallel to the $x$ or $y$ axes and the center of each square has the form $(a+.5,b+.5)$ for some integers $a$ and $b$.

\subsection{Graphs}

We will use graphs that model the moves a knight could make on each of the six surfaces described in the previous subsection.  Each graph corresponding to a compact surface has a vertex for each ordered pair of integers $(a,b)$ with $0 \leq a < m$ and $0 \leq b < n$ and an edge for each distinct move a knight could make from one vertex to another.  To be more precise, on a rectangle there is a knight move from $(a,b)$ to $(a+c,b+d)$ so long as $\{|c|,|d|\} = \{1,2\}$.  If $c_0$ and $d_0$ are integers so that $\{|c_0|,|d_0|\} = \{1,2\}$, then we regard the knight moves from $(a,b)$ to $(a+c_0,b+d_0)$ and from $(a+c_0,b+d_0)$ to $(a+c_0-c_0,b+d_0-d_0) = (a,b)$ as a single edge. The graph modeling the possible moves of a knight on a rectangle, $\Ra{m}{n}$, is the graph whose edges are precisely these equivalence classes of knight moves.  To obtain the other graphs, we add edges to $\Ra{m}{n}$ that represent each quotient of the rectangle. For $\Ca{m}{n}$, we add edges from $(a,b)$ when $a+c \ne (a+c \mod m)$; specifically, these are edges from $(a,b)$ to $((a+c) \mod m, b+d)$. Note that $\Ca{m}{n}$ is $\Ca{n}{m}$ as discussed in \cite{forrest-teehan}.  For $\Ma{m}{n}$, we add edges for knight moves from $(a,b)$ when $b+d \ne (b+d \mod n)$, and these edges are from $(a,b)$ to $(m-a-c,(b+d) \mod n)$. Lastly, for $\Ka{m}{n}$, we include every edge in at least one of $\Ca{m}{n}$ and $\Ma{m}{n}$ and add edges for knight moves from $(a,b)$ when $a+c \ne (a + c \mod m)$ and $b+d \ne (b+d \mod n)$.  These edges are from $(a,b)$ to $((m - a - c) \mod m, (b + d) \mod n)$.

These definitions cause technical issues for boards with small values of $m$ and $n$, as these graphs can sometimes be multigraphs or pseudographs.  For example, $\Ka{1}{1}$ is a pseudograph with one vertex and four edges.  Similarly, $\Ca{1}{2}$ is a multigraph, as it has two vertices between which there are two edges.  

We will also make use of graphs modeling knight moves on the infinite strip and plane.  Specifically, $\Sc{m}$ is the graph that has a vertex for each ordered pair of integers $(a,b)$ with $0 \leq a < m$, and $\Pc$ has a vertex for each ordered pair of integers.  The edges in these graphs are defined analogously to the edges in $\Ra{m}{n}$. There are combinatorial maps $\phi_M \colon \Sc{m} \to \Ma{m}{n}$ and $\phi_K \colon \Pc \to \Ka{m}{n}$ given by mapping the vertex $(a,b)$ to $((-1)^{b \di n}a \mod m, b \mod n)$. These maps take the edge from $(a,b)$ to $(a+c,b+d)$ to the edge from $((-1)^{b \di n}a \mod m, b \mod n)$ to $(((-1)^{b \di n}(a+c)) \mod m, (b+d) \mod n)$. In addition to preserving the graph structure, $\phi_M$ and $\phi_K$ are covering maps. 

\subsection{Mapping Graphs to Surfaces}

There are maps from the graphs $\Ma{m}{n}$, $\Ka{m}{n}$, $\Sc{m}$, and $\Pc$ to $M$, $K$, $S$, and $P$ respectively. These maps send each vertex $(a,b)$ to the point $(a + .5, b + .5)$ and each edge continuously to the geodesic line segment connecting the vertices.  We denote these maps $i_M \colon \Ma{m}{n} \to M$, $i_K \colon \Ka{m}{n} \to K$, $i_S \colon \Sc{m} \to S$, and $i_P \colon \Pc \to P$.  For $i_M$ and $i_K$, some care is needed in mapping the edges, as $\Ma{m}{n}$ and $\Ka{m}{n}$ may be multigraphs or pseudographs, or equivalently, the geodesic line segment between two vertices in $M$ and $K$ may not be unique; this technical issue does not exist for $i_S$ and $i_P$, as $\Sc{m}$ and $\Pc$ are simple graphs.  Let $e$ be an edge in $\Ma{m}{n}$.  We define $i_M(e) = p_M \circ i_S(e')$, where $e'$ is any lift of $e$ in $\Sc{m}$.  Likewise, if $e$ is an edge in $\Ka{m}{n}$, we define $i_K(e) = p_K \circ i_P(e')$, where $e'$ is any lift of $e$ in $\Pc$.  That is, we define $i_M$ and $i_K$ so that the diagram below commutes. 
\begin{figure}[h]
\[
    \xymatrix{
      \Sc{m} \ar[r]^{\textstyle i_S}\ar[d]_{\textstyle\phi_M} & S\ar[d]^{\textstyle p_M}  & \qquad & \Pc \ar[r]^{\textstyle i_P}\ar[d]_{\textstyle\phi_K} & P\ar[d]^{\textstyle p_K}  &\\
    \Ma{m}{n} \ar[r]_{\textstyle i_M} & M & \qquad & \Ka{m}{n} \ar[r]_{\textstyle i_K} & K 
    }
\]
\end{figure}

Since $\Ra{m}{n}$, $\Ca{m}{n}$, $\Ma{m}{n}$, and $\Ka{m}{n}$ have all been defined to model the moves a knight can make on a surface, we refer to Hamiltonian cycles on these graphs as knight's tours and Hamiltonian paths that are not cycles as open knight's tours.

\subsection{Graph Theoretic Results}

In this subsection, we review some classical results on knight's tours that are pertinent to our work. One such result is Schwenk's characterization of the dimensions of rectangular boards that admit closed knight's tours:

\begin{theorem}[Schwenk \cite{schwenk}]
The graph $\Ra{m}{n}$ with $m \leq n$ and $n > 1$ admits a knight's tour if and only if none of the following are true:
\begin{itemize}
\item $m$ and $n$ are simultaneously odd;
\item $m=1$, $2$, or $4$; or
\item $m = 3$ while $n=4$, $6$, or $8$. 
\end{itemize}
\label{thm:schwenk} 
\end{theorem}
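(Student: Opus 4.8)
The plan is to split the biconditional into a \emph{necessity} direction, showing that each of the three listed conditions obstructs a tour, and a \emph{sufficiency} direction, showing that a tour exists whenever none of them holds; the sufficiency direction I would handle by an inductive construction, while the obstructions reduce to coloring and parity arguments together with a short finite analysis.

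For necessity, I would first record that $\Ra{m}{n}$ is bipartite: coloring the squares in the usual checkerboard pattern, every knight move joins squares of opposite color. Since a Hamiltonian cycle in a bipartite graph forces the two color classes to be equal in size, the case where $m$ and $n$ are both odd makes $mn$ odd, so the classes differ in size and no tour exists. For $m=1$ there are no knight moves at all, and for $m=2$ every legal move changes the $x$-coordinate by $\pm 1$ and the $y$-coordinate by $\pm 2$; hence the parity of the row index is preserved and $\Ra{2}{n}$ is disconnected for every $n \ge 2$, ruling out a Hamiltonian cycle. The case $m=4$ is the genuinely clever obstruction: alongside the checkerboard coloring I would introduce a second two-coloring assigning one color to the two outer columns and the other color to the two inner columns. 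A direct check shows every move out of an outer-column square lands on an inner-column square, so in any Hamiltonian cycle the $2n$ outer squares form an independent set of maximal size in a cycle of length $4n$, forcing the cycle to alternate outer/inner. Since it also alternates checkerboard color, the two alternations lock together, only two of the four combined color classes are ever visited, and this contradicts the requirement that all $4n$ vertices appear. Finally, the three exceptional boards $\Ra{3}{4}$, $\Ra{3}{6}$, and $\Ra{3}{8}$ I would dispatch by a forcing argument: on a width-$3$ board each of the four corner squares has degree $2$, so both of its incident edges lie in any tour, and propagating these forced edges inward for $n = 4,6,8$ closes a short cycle before all squares are reached (a finite case check serves equally well).

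For sufficiency I would argue by induction on $n$ with $m$ fixed, the engine being a \emph{splicing lemma}: if $\Ra{m}{n}$ admits a tour containing a prescribed pair of parallel edges near one vertical boundary, then it merges with a tour of a short $m \times k$ block carrying matching edges on its opposite boundary to produce a tour of $\Ra{m}{n+k}$, obtained by deleting the two boundary edge-pairs and reconnecting the two cycles across the seam by bridging knight moves. I would first exhibit, for each admissible width $m$, a small collection of base tours realizing the smallest valid $n$ for that $m$ and containing the required spliceable edges, and then apply the lemma repeatedly to reach every larger admissible $n$.

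The hard part will be the sufficiency construction rather than the obstructions. The delicate point is formulating the splicing lemma so that the spliceable boundary edges are \emph{reproduced} after each gluing, allowing the induction to continue, and then assembling enough base cases, since the width $m$ ranges over all admissible values and each narrow width demands its own explicitly constructed starting tours. By contrast the necessity arguments are routine once the two colorings for $m=4$ are in hand, with only the finite $3 \times n$ exceptions requiring careful bookkeeping.
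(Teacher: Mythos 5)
Theorem \ref{thm:schwenk} is imported from Schwenk \cite{schwenk} and is not proved in this paper, so the only available comparison is with Schwenk's published argument, which your outline reproduces faithfully: the checkerboard parity obstruction for odd $\times$ odd boards, the two interlocking $2$-colorings for $m=4$, a finite analysis of the three exceptional $3\times n$ boards, and an inductive splicing of explicitly constructed base tours for sufficiency. The plan is sound and is essentially the standard route; as you yourself note, the remaining labor lies entirely in exhibiting the base tours and verifying that the spliceable boundary edges are reproduced after each gluing, which is exactly where Schwenk's paper spends its effort.
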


Since $\Ma{m}{n}$ and $\Ka{m}{n}$ canonically contain $\Ra{m}{n}$, any closed knight's tour on $\Ra{m}{n}$ is a nullhomotopic tour on $\Ma{m}{n}$ and $\Ka{m}{n}$; therefore, Theorem \ref{thm:schwenk} is a sufficient condition establishing the existence of nullhomotopic tours on $\Ma{m}{n}$ and $\Ka{m}{n}$.

Watkins and Hoenigman completed the analogous characterizations for tori in \cite{watkins-hoenigman}, while Watkins extended this to cylinders, M\"obius strips, and Klein bottles in \cite{watkins}.  In our work, Watkins' results regarding M\"{o}bius strips are of particular utility:

\begin{theorem}[Watkins \cite{watkins}; pg. 82]
The multigraph $\Ma{m}{n}$ admits a knight's tour if and only if none of the following are true:
\begin{itemize}
\item $m = 1$ and $n > 1$;
\item $m = 2$ and $n$ is even;
\item $m = 3$ and $n = 1$ or $4$;
\item $m = 4$ and $n$ is odd; or
\item $m = 5$ and $n = 1$.
\end{itemize}
\label{thm:watkinsms}
\end{theorem}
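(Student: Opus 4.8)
The plan is to prove the two directions separately: the \emph{necessity} of excluding each listed family (no tour exists when a condition holds) and the \emph{sufficiency} of the remaining dimensions (a tour exists otherwise). A useful input throughout is Theorem \ref{thm:schwenk}: since $\Ma{m}{n}$ contains $\Ra{m}{n}$ as a subgraph, every rectangular tour is automatically a tour on the M\"{o}bius strip. Consequently the only genuinely new work in the sufficiency direction concerns those $(m,n)$ for which the rectangle admits no tour but the extra, twisted seam edges of $\Ma{m}{n}$ can be put to use.

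For necessity I would treat the finite and narrow families by direct structural analysis and the two infinite families by a coloring invariant. The case $m=1$, $n>1$ is the most transparent: the M\"{o}bius identification is only vertical, so with a single column nearly every knight move leaves the board horizontally with no wraparound to catch it, and the few surviving twisted edges are far too sparse to form a single cycle through all $n$ vertices. The sporadic boards $m=3$ with $n\in\{1,4\}$ and $m=5$ with $n=1$ I would rule out by a short forced-edge argument: low-degree vertices force their incident edges into any Hamiltonian cycle, and tracing the consequences yields either a short forced subcycle or a disconnection. For $m=2$ with $n$ even and $m=4$ with $n$ odd I would use a parity/coloring invariant: one assigns each square a color as a function of its coordinates, checks that interior knight moves change the color in a fixed way, and verifies how the twisted identification $(x,0)\sim(m-x,n)$ acts on the color classes. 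The point is that the reflection $x\mapsto m-x$ together with the vertical shift by $n$ preserves the obstructing color pattern exactly under the stated parity, so a closed tour would force a counting identity that cannot hold. For $m=4$ this is the M\"{o}bius analogue of Schwenk's classical $4\times n$ coloring obstruction.

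For sufficiency I would split the admissible boards into two regimes. When $\Ra{m}{n}$ already admits a tour, Theorem \ref{thm:schwenk} finishes the case immediately. The remaining admissible boards are precisely the narrow widths and the boards with $m$ and $n$ both odd, where the rectangle necessarily fails and the twisted seam edges must do real work. For these I would use the standard architecture of base cases plus an extension lemma: exhibit explicit closed tours on a finite list of seed boards -- one of each admissible parity at each small width, together with a uniform family for the both-odd case -- each actively using seam edges, and then show that a tour on $\Ma{m}{n}$ extends to one on $\Ma{m}{n+2}$ by splicing a height-two slab of rows into the interior of the strip, away from the seam. Since the slab is inserted in the orientation-preserving interior, its local move patterns glue uniformly as on a cylinder: one opens a suitable edge of the existing tour, routes the knight through the two new rows, and recloses. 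The $+2$ step respects the coloring parity, so iterating from each seed reaches every allowed $n$ of the correct parity; a parallel widening step is used to reach the arbitrarily large odd widths of the both-odd family.

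The main obstacle is coordinating these constructions with the twist and with the small-$n$, narrow-$m$ boundary, which is exactly where the sporadic exclusions live. I anticipate several delicate points. First, the height extension must preserve, step to step, the existence of an openable edge, and the seeds must be chosen so the induction never lands on an excluded board -- for example at $m=3$ the extension from $n=2$ would reach the forbidden $n=4$, so the even chain must be seeded at $n=6$, the board $n=2$ handled separately, and $n=4$ left as a genuine exception. Second, the both-odd family and any widening in $m$ interact directly with the orientation-reversing identification $(x,0)\sim(m-x,n)$: unlike the interior height-slab, a construction that crosses or enlarges the seam behaves differently according to how the reflection $x\mapsto m-x$ permutes columns, so the gluing patterns must be designed to remain valid knight moves across the twisted boundary. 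Third, at the thin widths $m\in\{2,4,5\}$ the graph is barely connected, so the explicit seeds and the spliced moves require direct case checking, and one must confirm that precisely the listed boards -- and no others -- resist every construction.
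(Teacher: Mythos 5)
This statement is not proved in the paper at all: it is quoted verbatim from Watkins \cite{watkins} (p.~82) and used as a black box, so there is no proof of the authors' to compare yours against. Judged on its own terms, your outline reproduces the standard architecture one would expect (necessity by colorings and forced-edge analysis, sufficiency by Schwenk plus explicit seed tours and a row-splicing induction), but it contains at least one step that fails as stated. For the family $m=4$, $n$ odd you claim that Schwenk's outer/inner two-coloring obstruction transfers ``exactly'' because the identification $(x,0)\sim(m-x,n)$ preserves the color pattern. It does not: the seam edges of $\Ma{4}{n}$ include moves of the form $(0,b)$ to $(4-1, (b\pm2)\bmod n)=(3,(b\pm 2)\bmod n)$, which join an outer column to an outer column. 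So the key property underlying Schwenk's argument --- that every neighbor of an outer square is an inner square, forcing the tour to alternate outer/inner and hence identify the outer class with one checkerboard class --- is destroyed by exactly the edges that make the M\"obius board different from the rectangle. (Worse, for $n$ odd those seam edges also preserve the checkerboard color, so both colorings are violated simultaneously.) A correct proof must count how many seam edges a hypothetical tour uses and derive a parity contradiction from that count; asserting that the reflection ``preserves the obstructing color pattern'' skips the entire content of the case. A similar issue arises for $m=2$, $n$ even, where the checkerboard coloring is a proper $2$-coloring with balanced classes and therefore obstructs nothing; the actual obstruction there is a different invariant (the two $b$-parity classes of vertices are joined by essentially one seam edge, so a Hamiltonian cycle cannot visit both).

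On the sufficiency side your plan is plausible and matches the method this paper uses for its own theorems (explicit lifted base tours plus extension lemmas such as Proposition \ref{prop:extending} and the row-insertion in the $4\times n$ case), but as written it proves nothing: every base tour is unexhibited, the two-row splicing lemma is unproved, and the both-odd widening --- which you correctly identify as interacting with the orientation-reversing seam --- is precisely the part that requires the replacement-path machinery of Section 3 or something equivalent. For a classification theorem of this kind the explicit constructions are the proof; without them, and with the $m=4$ and $m=2$ obstructions as currently argued, the proposal has genuine gaps in both directions.
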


Theorem \ref{thm:watkinsms} gives a necessary condition for the existence of nullhomotopic and generating tours on M\"{o}bius strips.  We will not make use of the analogous result for Klein bottles because Watkins proved that $\Ka{m}{n}$ admits a knight's tour for all $m$ and $n$ \cite[pg. 81]{watkins}.

Another useful result regarding the existence of generating tours on $\Ma{m}{n}$ is due to Ralston:

\begin{theorem}[Ralston \cite{ralston}]
Let $m$ and $n$ be odd with both $m$ and $n$ greater than or equal 5 and at least one them strictly greater than 5. If $(a,b)$ and $(c,d)$ are a distinct pair of vertices where $a+b$ and $c+d$ are even, then there is an open knight's tour in $\Ra{m}{n}$ starting at $(a,b)$ and ending at $(c,d)$.
\label{thm:ralston}
\end{theorem}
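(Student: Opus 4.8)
The plan is to prove the sufficiency of the hypotheses (which are also necessary) by strong induction on $m+n$, peeling a strip of even width off the board and splicing together Hamiltonian paths on the two pieces.

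First I would record why the parity hypothesis is forced, since this reveals what the theorem really asserts. When $m$ and $n$ are both odd, $\Ra{m}{n}$ is bipartite with color classes $E=\{(a,b):a+b\text{ even}\}$ and $O=\{(a,b):a+b\text{ odd}\}$, and since $mn$ is odd we have $|E|=|O|+1$. A Hamiltonian path visits all $mn$ vertices with colors strictly alternating, so when $mn$ is odd both of its endpoints must lie in the larger class $E$. Hence the requirement that $a+b$ and $c+d$ be even is necessary, and the theorem asserts that it is the only obstruction: every pair of distinct vertices of $E$ bounds a Hamiltonian path.

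For the induction, the base cases are the admissible boards that cannot be reduced, namely $\Ra{5}{7}$, $\Ra{5}{9}$, and $\Ra{7}{7}$ together with their transposes; these I would dispatch by explicit construction, using the dihedral symmetries of the rectangle (which preserve both the knight's graph and the coloring) to cut down the number of endpoint pairs $\{(a,b),(c,d)\}$ that must be realized. For the inductive step, assume without loss of generality $m\ge 9$ and cut $\Ra{m}{n}$ into $A=\Ra{m-4}{n}$ and a strip $B=\Ra{4}{n}$. Here $A$ is again an unbalanced board of the inductive type (note $(m-4)n$ is odd and $m-4\ge 5$), while $B$ has $4n$ cells and is balanced. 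The strategy is to traverse $B$ as a single arc that is entered and exited by knight moves crossing the cut, delegating the remainder to $A$ by induction; one then case-splits on how many of the two prescribed endpoints lie in $A$ versus $B$ and checks that a knight's move across the cut flips color in exactly the way needed to reconcile the even global endpoints with the balanced and unbalanced parities of the two pieces.

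Making this precise requires two connector lemmas. The first is a laceability statement for the narrow strip $B=\Ra{4}{n}$ --- that suitably colored cells in its cut-adjacent column are joined by a Hamiltonian path of $B$ --- which I would prove by a separate induction on $n$. The second is a strengthening of the main inductive hypothesis, guaranteeing that the Hamiltonian path built in $A$ can be forced to use a prescribed edge in the column next to the cut, so that this edge may be replaced by the detour through $B$. I expect the main obstacle to be exactly this bookkeeping: verifying in each case that the sub-path endpoints can be chosen to match the prescribed global endpoints, carry the colors required in an unbalanced ($A$) versus balanced ($B$) piece, and sit a knight's move apart across the cut. The ``at least one of $m,n$ strictly greater than $5$'' exclusion is what makes the base-case analysis unavoidable, since removing a $4$-strip from $\Ra{5}{9}$ or $\Ra{9}{5}$ lands on the forbidden $\Ra{5}{5}$; these boards therefore cannot be reduced and must be established directly.
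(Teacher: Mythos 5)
The paper does not actually prove this statement: Theorem \ref{thm:ralston} is imported from Ralston \cite{ralston} and used as a black box (in the $5\times n$ and odd $m\times n$ cases of Theorem \ref{thm:MSGen}), so there is no in-paper argument to compare yours against, and your proposal has to be judged on its own terms as a reconstruction of Ralston's result. The necessity half is correct and complete: $\Ra{m}{n}$ is bipartite under the standard coloring, the even class exceeds the odd class by one when $mn$ is odd, and an alternating path on an odd number of vertices must begin and end in the larger class. The overall shape --- explicit small boards plus splicing a Hamiltonian path of a width-$4$ strip into an edge of a Hamiltonian path of the complementary board --- is the standard divide-and-conquer strategy in this literature, your identification of the irreducible base cases $\Ra{5}{7}$, $\Ra{7}{7}$, $\Ra{5}{9}$ (because $\Ra{5}{5}$ is excluded) is right, and the color bookkeeping across the cut does balance, since $B=\Ra{4}{n}$ has an even number of cells and a knight move flips color.

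There is, however, a structural gap beyond the admittedly heavy unproven components. Your splicing mechanism only covers the case in which both prescribed endpoints lie in the retained piece $A=\Ra{m-4}{n}$, so that $B$ can be traversed as a single arc detouring off one edge of the path in $A$. But this case cannot always be arranged: for $\Ra{9}{7}$ only the column direction of cutting keeps the large factor admissible (cutting rows yields $\Ra{9}{3}$), and with endpoints such as $(0,0)$ and $(8,6)$ each of the two possible placements of the $4$-strip swallows one prescribed endpoint. When an endpoint lies in $B$, the global path must enter and leave $B$ an odd number of times, so $B$ must be covered by \emph{two} disjoint arcs with prescribed ends, one of which is a free endpoint of the tour; that is a genuinely different and stronger connector lemma than the single-arc laceability statement you formulate, and you gesture at the case split without supplying a lemma that supports it. Relatedly, since your second connector lemma strengthens the inductive hypothesis (forcing the path in $A$ through a prescribed edge near the cut), that stronger statement is what must actually be proved by induction and verified in the base cases, which substantially enlarges the already large amount of explicit casework those boards require.
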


An open knight's tour on $\Ra{m}{n}$ starting and ending at a pair of well-chosen vertices produces a generating tour on $\Ma{m}{n}$, so in some cases Theorem \ref{thm:ralston} suffices to demonstrate that generating tours exist on $\Ma{m}{n}$.

\section{{\bf Applying Covering Space Theory to Knight's Tours}}

Covering space theory provides a simple way to determine the topology of knight's tours in $\Ma{m}{n}$ and $\Ka{m}{n}$.  In particular, the end points of the lifts of the tours to $\Sc{m}$ or $\Pc$ determine the topology of the tour.  In this section, we make this relationship precise and examine some consequences that we will find useful.

\subsection{Covering Space Theory}

This subsection reviews the necessary results from covering space theory.  We have suppressed some of the details, as we closely follow the discussion in \cite[Sections 3]{forrest-teehan}; please see that work for a more thorough exposition.

Knight's tours can be regarded as loops in the graphs and topological spaces defined in the previous section, and for a loop $f \colon (I,0) \to (X,x)$, where $I$ is the closed unit interval and $X$ is a topological space, we let $[f]$ denote its homotopy class.  We will take $(0,0) = \overline{0}$ as our basepoint in each graph and $v = (.5,.5)$ as our basepoint in each surface.  Each knight's tour $f \colon (I,0) \to (\Ma{m}{n}, \overline{0})$ or $(\Ka{m}{n}, \overline{0})$ gives an element of $\pi_1(M,v)$ and $\pi_1(K,v)$ respectively; these are $[i_M \circ f]$ and $[i_K \circ f]$.  Note that {\it nullhomotopic tours} in $\Ma{m}{n}$ are precisely tours corresponding to the identity elements of $\pi_1(M,v)$; analogously, nullhomotopic tours in $\Ka{m}{n}$ correspond to the identity in $\pi(K,v)$. The tours in $\Ma{m}{n}$ that give the generator of $\pi_1(M,v)$ are the {\it generating tours}.  Let $\h, \diag \colon I \to P$ be the straight line paths from $(.5,.5)$ to $(m+.5,.5)$ and from $(.5,.5)$ to $(m-.5,n+.5)$ respectively. Then a knight's tour in $\Ka{m}{n}$ whose image under $i_M$ is homotopic to $p_M \circ \h$ is a {\it cylindrical tour}; if instead the image is homotopic to $p_M \circ \diag$, the tour is a {\it M\"{o}bius tour}. 

The following classical theorem from covering space theory is our main tool to determine whether a knight's tour is any of the types that we study in this work:

\begin{theorem}
Let $\tilde{X}$ be the universal cover of $X$ with covering map $p \colon \tilde{X} \to X$, and let $x \in X$ with $\tilde{x} \in p^{-1}(x)$.  Let $f, g \colon (I, 0) \to (X, x)$ be two loops with lifts $\tilde{f}, \tilde{g} \colon (I, 0) \to (\tilde{X}, \tilde{x})$.  Then $f$ and $g$ are path homotopic if and only if $\tilde{f}(1) = \tilde{g}(1)$.
\label{thm:cover}
\end{theorem}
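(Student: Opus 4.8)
The plan is to prove the standard homotopy-lifting characterization of path homotopy in terms of endpoints of lifts, which is a classical consequence of the unique path lifting property and the homotopy lifting property for covering spaces. The two directions are quite asymmetric in difficulty: the forward direction requires a genuine lifting argument, while the reverse direction follows from the universal cover being simply connected. I would set up notation first, recalling that for a covering map $p \colon \tilde{X} \to X$ and a chosen basepoint lift $\tilde{x} \in p^{-1}(x)$, every path in $X$ starting at $x$ has a \emph{unique} lift starting at $\tilde{x}$.

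\textbf{Forward direction.} Suppose $f$ and $g$ are path homotopic, so there is a homotopy $H \colon I \times I \to X$ with $H(s,0) = f(s)$, $H(s,1) = g(s)$, and $H(0,t) = H(1,t) = x$ for all $t$ (the endpoints are held fixed). The key step is to invoke the homotopy lifting property to produce a lift $\tilde{H} \colon I \times I \to \tilde{X}$ with $\tilde{H}(0,0) = \tilde{x}$. I would then argue that $\tilde{H}$ restricts on the bottom edge to the unique lift of $f$ starting at $\tilde{x}$, hence $\tilde{H}(s,0) = \tilde{f}(s)$, and similarly analyze the left edge $t \mapsto \tilde{H}(0,t)$: since $H(0,t) \equiv x$ is constant, its lift starting at $\tilde{x}$ must be constant at $\tilde{x}$ by unique path lifting, so $\tilde{H}(0,t) = \tilde{x}$ for all $t$. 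Consequently the top edge $s \mapsto \tilde{H}(s,1)$ is a lift of $g$ starting at $\tilde{x}$, so by uniqueness it equals $\tilde{g}$. The right edge $t \mapsto \tilde{H}(1,t)$ is a lift of the constant path at $x$, hence constant; evaluating at $t=0$ and $t=1$ gives $\tilde{f}(1) = \tilde{H}(1,0) = \tilde{H}(1,1) = \tilde{g}(1)$, which is the desired conclusion.

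\textbf{Reverse direction.} Suppose instead that $\tilde{f}(1) = \tilde{g}(1)$. Since $\tilde{f}$ and $\tilde{g}$ share the same initial point $\tilde{x}$ and the same terminal point, the concatenation $\tilde{f} \cdot \overline{\tilde{g}}$ (where $\overline{\tilde{g}}$ is $\tilde{g}$ traversed in reverse) is a loop in $\tilde{X}$ based at $\tilde{x}$. Because $\tilde{X}$ is the universal cover, it is simply connected, so this loop is nullhomotopic, which shows $\tilde{f}$ and $\tilde{g}$ are path homotopic in $\tilde{X}$. Composing such a path homotopy with $p$ yields a path homotopy between $f = p \circ \tilde{f}$ and $g = p \circ \tilde{g}$ in $X$, completing the argument.

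The main obstacle is carrying out the forward direction cleanly: one must correctly invoke the homotopy lifting property to obtain $\tilde{H}$ and then deploy uniqueness of path lifts repeatedly to pin down all four edges of the lifted square. In particular, the step asserting that the right edge $t \mapsto \tilde{H}(1,t)$ is constant (so that the two corners agree) is the crux, and it relies on the fact that a lift of a constant path is constant. Everything else is bookkeeping, and since this is a standard theorem the exposition can remain brief, citing unique path lifting and the homotopy lifting property as the two structural inputs.
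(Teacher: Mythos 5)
Your proof is correct: the forward direction via the homotopy lifting property and repeated use of unique path lifting, and the reverse direction via simple connectivity of the universal cover, is the standard argument. The paper itself states this as a classical theorem and gives no proof, so there is nothing to compare against; your write-up supplies exactly the argument the paper implicitly relies on.
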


Now let $f \colon (I, 0) \to (\Ma{m}{n},\OO)$ be a loop and note that $f$ lifts to a path $\tilde{f}$ in $\Sc{m}$.  By Theorem \ref{thm:cover}, the endpoints of the lift determine the homotopy class of $i_M \circ f$.  Likewise, when $g \colon (I, 0) \to (\Ka{m}{n},\OO)$ is a loop, $g$ lifts to a path $\tilde{g}$ in $\Pc$ whose endpoints determine the homotopy class of $i_K \circ g$.

\begin{corollary}
Let $f \colon (I, 0) \to (\Ma{m}{n},\OO)$ and $g \colon (I, 0) \to (\Ka{m}{n},\OO)$ be knight's tours with lifts $\tilde{f} \colon I \to \Sc{m}$ and $\tilde{g} \colon I \to \Pc$ with given initial points $\tilde{f}(0) = \OO$ and $\tilde{g}(0) = \OO$.  Then:
\begin{enumerate}
\item  The loop $f$ is a nullhomotopic tour if and only if $\tilde{f}(1) = \OO$.
\item The loop $f$ is a generating tour if and only if $\tilde{f}(1) = (m-1,\pm n)$.
\item The loop $g$ is a nullhomotopic tour if and only if $\tilde{g}(1) = \OO$.
\item The loop $g$ is a cylindrical tour if and only if $\tilde{g}(1) = (\pm m,0)$.
\item  The loop $g$ is a M\"{o}bius tour if and only if $\tilde{g}(1) = (m-1,\pm n)$.
\end{enumerate}
\label{cor:cover}
\end{corollary}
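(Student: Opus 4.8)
The plan is to deduce all five statements from Theorem \ref{thm:cover} applied to the universal covers $p_M\colon S\to M$ and $p_K\colon P\to K$, using the commutative squares to transport the computation from the surfaces down to the graph lifts. The crucial point is that since $p_M\circ i_S = i_M\circ\phi_M$, the composite $i_S\circ\tilde f$ is exactly the lift of the loop $i_M\circ f$ to $S$ starting at $\tilde v := i_S(\OO) = (.5,.5)$; symmetrically, $i_P\circ\tilde g$ is the lift of $i_K\circ g$ to $P$ starting at $\tilde v$. Because $i_S$ and $i_P$ each send a vertex $(a,b)$ to the square center $(a+.5,b+.5)$, the endpoint of each surface lift is the center of the terminal vertex $\tilde f(1)$ (resp. $\tilde g(1)$), so prescribing the surface endpoint is the same as prescribing the terminal vertex. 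Each tour type is defined by requiring $i_M\circ f$ or $i_K\circ g$ to be path homotopic to a specific reference loop, so by Theorem \ref{thm:cover} the defining condition is equivalent to an equality of lift endpoints in the universal cover. Thus every part reduces to computing the endpoint of the based lift of the reference loop and converting that point from center coordinates to a vertex. Note that only the loop structure of $f$ and $g$ is used, so being a knight's tour plays no role here.

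For the nullhomotopic statements (1) and (3) the reference loop is the constant loop at $v$, whose based lift is the constant path ending at $\tilde v=(.5,.5)$. By the previous paragraph the endpoint of the lift of $i_M\circ f$ is the center of $\tilde f(1)$, so $i_M\circ f$ is nullhomotopic exactly when that center equals $(.5,.5)$, i.e. when $\tilde f(1)=\OO$; the identical argument with $i_P$ and $\tilde g$ gives (3).

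For the Klein bottle statements (4) and (5) the reference loops are $p_K\circ\h$ and $p_K\circ\diag$, where $\h$ and $\diag$ are the explicit straight-line paths in $P$ from $\tilde v$ to $(m+.5,.5)$ and $(m-.5,n+.5)$. Since these paths already live in $P$ and start at $\tilde v$, they are themselves the based lifts of the reference loops, with endpoints $(m+.5,.5)$ and $(m-.5,n+.5)$ -- the centers of the vertices $(m,0)$ and $(m-1,n)$. Equating the endpoint of the lift of $i_K\circ g$ with each of these gives $\tilde g(1)=(m,0)$ and $\tilde g(1)=(m-1,n)$. The opposite signs in the corollary appear because a generator of the fundamental group of the cylinder, or of the Möbius strip, is determined only up to inversion: the image of the other generator is the inverse class, realized by the reversed tour, whose lift ends at the reflected centers $(-m+.5,.5)$ and $(m-.5,-n+.5)$, i.e. at the vertices $(-m,0)$ and $(m-1,-n)$.

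The remaining and most delicate case is the generating statement (2), where the reference loop must generate $\pi_1(M,v)\cong\mathbb{Z}$. Here I would first identify the generator of the deck group of $p_M\colon S\to M$ with the glide reflection $D(x,y)=(m-x,\,y+n)$: a direct check gives $p_M\circ D=p_M$, and the quotient of $S$ by $\langle D\rangle$ recovers the identification $(x,0)\sim(m-x,n)$ defining $M$, so $D$ generates. Under the lifting correspondence the class $k\in\mathbb{Z}$ is sent to $D^k(\tilde v)$, whence the fiber $p_M^{-1}(v)=\{D^k(\tilde v):k\in\mathbb{Z}\}$ consists of the centers $(.5,.5+kn)$ for $k$ even and $(m-.5,.5+kn)$ for $k$ odd; the two generators $k=\pm1$ give $(m-.5,.5\pm n)$, i.e. the vertices $(m-1,\pm n)$. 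I expect the bookkeeping in this step to be the main obstacle: one must confirm that $D$ (rather than a competing candidate) is a generator and that it matches a generator of $\pi_1(M,v)$, and one must track with care how the orientation-reversing identification moves horizontal position when passing between square centers (half-integers) and vertex indices (integers). It is precisely this flip that sends the center $.5$ to $m-.5$, i.e. vertex $0$ to vertex $m-1$ rather than to vertex $0$ or $m$, and getting this correspondence right is what produces the vertex $(m-1,\pm n)$ in (2) and, identically, in (5).
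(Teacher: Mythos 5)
Your argument is correct and is exactly the intended one: the paper itself gives no details, deferring to the ``directly analogous'' Corollary 3.3 of \cite{forrest-teehan}, whose content is precisely your reduction via Theorem \ref{thm:cover} to comparing lift endpoints, with $i_S\circ\tilde f$ and $i_P\circ\tilde g$ as the based lifts and the fiber over $v$ enumerated by powers of the glide reflection. Your identification of the deck generator $D(x,y)=(m-x,y+n)$ and the resulting vertices $(m-1,\pm n)$, as well as the observation that the $\pm$ in (4) and (5) comes from the two generators of $\mathbb{Z}$, all check out.
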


The proof of Corollary \ref{cor:cover} is directly analogous to the proof of Corollary 3.3 in \cite{forrest-teehan}.  Additionally, the definitions of nullhomotopic and generating tours used for $\Ma{m}{n}$ in this work are analogous to those used for cylinders in \cite{forrest-teehan}.  Two results regarding knight's tours on cylinders from \cite{forrest-teehan} will be used in this work and are listed below.  We have adapted these results to our context; recall that $\Ca{m}{n}$ in this work is $\Ca{n}{m}$ in \cite{forrest-teehan}.

\begin{theorem}[\cite{forrest-teehan}, Theorem 4.1]
The multigraph $\Ca{m}{n}$ has a nullhomotopic tour if and only if none of the following are true:
\begin{itemize}
\item $m$ and $n$ are simultaneously odd and at least one of $m$ and $n$ is greater than 1;
\item $n = 1$ and $m > 1$;
\item $n = 2$; or
\item $n = 4$ and $m$ is even.
\end{itemize}
\label{thm:forrest-teehan}
\end{theorem}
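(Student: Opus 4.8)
The plan is to prove both implications by pairing the covering-space criterion with coloring obstructions on the necessity side and with Schwenk's theorem plus explicit constructions on the sufficiency side. Throughout I use the cylinder analogue of Corollary~\ref{cor:cover} (the instance of Theorem~\ref{thm:cover} for $C$): the universal cover of the cylinder is the infinite strip obtained by unrolling the circumference direction, its knight graph is a subgraph of $\Pc$, and a tour is nullhomotopic exactly when its lift to this cover is a closed cycle, equivalently when the tour has winding number zero about the cylinder (the total signed horizontal displacement is genuinely $0$, not merely $0 \bmod m$).

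For necessity I would dispatch the four excluded families separately. The simultaneously-odd case is immediate from the covering criterion: the cover sits inside $\Pc$, which is bipartite under the checkerboard coloring $(a,b)\mapsto (a+b)\bmod 2$, so every closed cycle in the cover has even length, whereas a nullhomotopic tour lifts to a closed cycle of length $mn$, odd when $m$ and $n$ are both odd (the hypothesis that one exceeds $1$ merely discards the degenerate one-vertex board). The case $n=1$ is trivial, since a single row leaves no vertical room for any knight move and the graph has no edges at all. For $n=2$ every move is forced to have $\Delta y=\pm1$ and $\Delta x=\pm2$, so the graph is $2$-regular and hence a disjoint union of cycles; it can admit a Hamiltonian cycle only when connected, which happens exactly for odd $m$, and in that case the unique cycle threads the circumference with nonzero winding, while for even $m$ the graph is disconnected and supports no tour at all. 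The remaining family, $n=4$ with $m$ even, is the crux, and I would treat it with the classical two-coloring used for $4\times n$ rectangles: assigning rows $0,3$ to class $A$ and rows $1,2$ to class $B$, every move from a row-$0$ or row-$3$ square lands in rows $1$ or $2$, so each of the $4m$ tour edges is an $A$--$B$ edge and the tour alternates $A,B$. Since every knight edge also alternates checkerboard color, and since evenness of $m$ is precisely the condition under which the wraparound edges preserve the checkerboard coloring, every tour edge flips both the $A/B$ bit and the $(a+b)\bmod 2$ bit; thus their XOR is constant along the tour, yet both XOR-values occur on the board, so no Hamiltonian cycle exists. (The borderline boards $m=2,4$ are multigraphs and I would verify them by hand.)

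For sufficiency I must produce a nullhomotopic tour for every admissible $(m,n)$. Whenever the conditions of Theorem~\ref{thm:schwenk} hold, the resulting closed tour of $\Ra{m}{n}$ uses no wraparound edge, so its lift to the strip cover is already closed and the tour is nullhomotopic for free; this disposes of all but a short list of ``gap'' dimensions that Schwenk forbids but the exclusion list above permits, such as $\Ca{3}{4}$ and boards with a side in $\{1,2,4\}$ on the non-wrapping direction. For these I would display explicit base tours that use wraparound edges in canceling pairs so that the lift closes up, and then propagate them by an induction that splices a two-row block into a nullhomotopic tour on $\Ca{m}{n}$ to obtain one on $\Ca{m}{n+2}$, arranging the inserted segment to contribute zero net horizontal displacement.

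I expect the genuine difficulty to be twofold. First, isolating the correct invariant for the $n=4$, even-$m$ case and checking that evenness of $m$ is exactly what lets the checkerboard coloring survive the wraparound: this is the one obstruction invisible from bipartiteness of the cover alone. Second, on the constructive side, guaranteeing that the base tours and the inductive splices are not merely Hamiltonian but have vanishing winding, which forces one to track the endpoints of the lifts in the strip cover throughout rather than reasoning combinatorially on $\Ca{m}{n}$ directly.
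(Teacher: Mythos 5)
This statement is imported verbatim from \cite{forrest-teehan} (their Theorem 4.1, with the roles of $m$ and $n$ swapped); the present paper gives no proof of it, so your attempt can only be judged on its own terms. Your necessity half is essentially correct and complete. The bipartiteness-of-$\Pc$ argument for the simultaneously odd case, the edgeless graph for $n=1$, the $2$-regularity analysis for $n=2$ (disconnected for $m$ even, a single forced cycle of winding $\pm 4$ for $m$ odd), and the combination of the rows-$\{0,3\}$ versus rows-$\{1,2\}$ classes with the checkerboard coloring for $n=4$, $m$ even all work; you correctly isolate that evenness of $m$ is exactly what makes the checkerboard coloring descend to a proper $2$-coloring of $\Ca{m}{n}$, and your counting argument in fact rules out \emph{any} Hamiltonian cycle on $\Ca{m}{4}$ for even $m$, multigraph cases included, so the hand-check you defer is not even needed.

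The genuine gap is the sufficiency half. An ``if and only if'' characterization of this kind is not established by announcing that one ``would display explicit base tours'' and ``propagate them by an induction'': the base tours and the verification that each splice preserves both Hamiltonicity and zero winding are the actual content, and none of it is supplied. Moreover, you misidentify where that work lives. The exclusions in the theorem constrain $n$, the \emph{non}-wrapping direction, so the boards that Schwenk forbids but the theorem permits are not ``a short list'' with a small side in the non-wrapping direction; they are several infinite families with a small side in the \emph{wrapping} direction, namely $\Ca{1}{n}$ for even $n \geq 4$, $\Ca{2}{n}$ and $\Ca{4}{n}$ for all $n \geq 3$ with $n \neq 4$, together with $\Ca{m}{4}$ for every odd $m \geq 3$, $\Ca{m}{3}$ for even $m \leq 8$, and $\Ca{3}{6}$, $\Ca{3}{8}$. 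Each family needs its own base construction and its own induction step (and the inductive insert must be checked to contribute zero net horizontal displacement, as you note but do not do). Until those constructions are exhibited --- as they are, at length, in \cite{forrest-teehan}, and as the analogous Sections 4--6 of this paper do for the M\"obius strip and Klein bottle --- the forward implication of the theorem is unproved.
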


\begin{theorem}[\cite{forrest-teehan}, Theorem 5.1]
The multigraph $\Ca{m}{n}$ has a generating tour if and only if none of the following hold:
\begin{itemize}
\item $n = 1, 2, 4$; or
\item $m$ is odd and $n$ is even.
\end{itemize}
\label{thm:forrest-teehan2}
\end{theorem}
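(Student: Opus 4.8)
The plan is to pass to the universal cover and then argue necessity by parity and coloring obstructions, and sufficiency by an explicit seam-closing construction. By the cylinder analogue of the covering-space correspondence in Corollary \ref{cor:cover} (proved in \cite{forrest-teehan}), a knight's tour $f \colon (I,0) \to (\Ca{m}{n}, \OO)$ is a generating tour exactly when its lift $\tilde{f}$ to the universal cover of the cylinder runs from $\OO$ to $(\pm m, 0)$; equivalently, a generating tour is a Hamiltonian cycle through $\OO$ whose lifted sequence of $mn$ knight steps has net displacement $(\pm m, 0)$. I would sort these steps into type $A$, with displacement $(\pm 2, \pm 1)$, and type $B$, with displacement $(\pm 1, \pm 2)$, writing $a$ and $b$ for their counts so that $a + b = mn$.

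First I would establish necessity. For the parity obstruction, the net vertical displacement of $\tilde{f}$ is $0$, so summing the $\pm 1$ contributions of the type-$A$ steps and the $\pm 2$ contributions of the type-$B$ steps and reducing mod $2$ forces $a$ to be even; the net horizontal displacement $\pm m$ forces $b \equiv m \pmod 2$. Hence $mn = a + b \equiv m \pmod 2$, which fails precisely when $m$ is odd and $n$ is even, ruling out generating tours in that case. The remaining exceptions are thin boards. When $n = 1$ no knight step keeps the $y$-coordinate in range, so $\Ca{m}{1}$ has no edges and no tour. When $n = 2$ every step is type $A$ and changes the $x$-coordinate by an even amount, including across the seam when $m$ is even, so $x \bmod 2$ is invariant and no Hamiltonian cycle can reach every column; the odd-$m$ subcase is already covered by the parity obstruction. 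When $n = 4$ and $m$ is even I would run the Schwenk-type two-coloring argument: coloring the outer rows $y \in \{0,3\}$ red and the inner rows $y \in \{1,2\}$ blue, every step out of a red square lands on a blue square, so with $2m$ squares of each color the tour must alternate red and blue; since $m$ is even the standard checkerboard coloring also alternates along the tour, and two period-two colorings of a single even cycle force all red squares to share one checkerboard color, contradicting the fact that the red rows contain both. Together these arguments exclude $n \in \{1,2,4\}$ and finish necessity.

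For sufficiency I would build a generating tour by closing a Hamiltonian path across the seam. The seam-crossing knight step from $(m-2, b-1)$ to $(0,b)$ has lift displacement $(2,1)$, so it suffices to produce a Hamiltonian path on $\Ra{m}{n}$ from $(0,b)$ to $(m-2, b-1)$: the closing step then joins it into a Hamiltonian cycle of net lift displacement $(m-2,-1) + (2,1) = (m,0)$. These prescribed endpoints have opposite checkerboard colors exactly when $m$ is even and equal colors exactly when $m$ is odd, matching the bipartite parity of an $mn$-vertex path in each of the two remaining regimes (odd $n \geq 3$ with arbitrary $m$, and even $n \geq 6$ with even $m$). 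For odd $m$ and odd $n$ with the boards large enough, Theorem \ref{thm:ralston} supplies such endpoint-prescribed open tours directly, while Schwenk's constructions from Theorem \ref{thm:schwenk}, adapted with a strip-insertion step, can be pushed to yield them on the even-$m$ boards. The small boards $n = 3$ and the $5 \times 5$ boundary, which lie outside Ralston's hypotheses, I would treat by explicit constructions and then grow $m$ or $n$ by splicing in knight's tours of rectangular blocks.

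The main obstacle I expect is this constructive half. The parity and coloring obstructions are short, but assembling generating tours uniformly across all surviving residue classes demands a reliable supply of Hamiltonian paths with prescribed seam endpoints together with a splicing lemma that provably preserves the net $(\pm m, 0)$ displacement. The most delicate points are the base cases just outside Ralston's range and the verification that each inserted block reconnects the tour into a single cycle of the correct homotopy type rather than merely a disjoint union of cycles.
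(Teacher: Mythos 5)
First, note that the paper does not prove this statement at all: it is imported verbatim (with the roles of $m$ and $n$ swapped) as Theorem 5.1 of \cite{forrest-teehan}, so there is no internal proof to compare against. Judged on its own terms, your necessity half is sound. The type-$A$/type-$B$ step count gives $a\equiv 0$ and $b\equiv m \pmod 2$, hence $mn\equiv m$, which is exactly the obstruction when $m$ is odd and $n$ is even (this is the cylinder analogue of the green--yellow coloring in Proposition \ref{prop:oddeven}); the $n=1$ (no edges), $n=2$ ($x\bmod 2$ invariant when $m$ is even, parity otherwise), and $n=4$, $m$ even (Schwenk's two-coloring, which descends to the cylinder precisely because $m$ is even) arguments are all correct and together cover every excluded case.

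The sufficiency half has a genuine gap. Your construction produces a generating tour only from a Hamiltonian path on the rectangle $\Ra{m}{n}$ joining $(0,b)$ to $(m-2,b-1)$, closed by a single seam edge. But the theorem asserts generating tours on boards where $\Ra{m}{n}$ has no Hamiltonian path at all: $\Ra{1}{n}$ has no edges, $\Ra{2}{n}$ is a disjoint union of four paths, and $\Ra{3}{n}$ and $\Ra{4}{n}$ admit open tours only for restricted $n$ and endpoints. Yet $\Ca{1}{3}$, $\Ca{1}{5}$, $\Ca{2}{5}$, $\Ca{2}{6}$, etc.\ all must carry generating tours, and any such tour necessarily crosses the seam many times, so it cannot arise from your scheme. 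Your list of leftover cases ($n=3$ and the $5\times 5$ board, ``outside Ralston's hypotheses'') therefore misidentifies where the approach actually breaks: the fatal cases are small $m$, not small $n$, and they form infinite families, not finitely many sporadic boards. A secondary weak point is the even-$m$ regime: Theorem \ref{thm:schwenk} supplies closed tours, and extracting from them an open tour between two prescribed, mutually non-adjacent squares is a genuinely harder problem than deleting an edge; ``adapted with a strip-insertion step'' does not bridge that. To repair the argument you would need either explicit multi-seam constructions for narrow cylinders together with a widening/splicing lemma in the spirit of Proposition \ref{prop:extending} (which is essentially how \cite{forrest-teehan} and the present paper proceed), or a supply of endpoint-prescribed open tours on even-sided rectangles that you have not actually established.
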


The multigraph $\Ca{m}{n}$ is canonically a subgraph of $\Ka{m}{n}$, so Theorems \ref{thm:forrest-teehan} and \ref{thm:forrest-teehan2} provide sufficient conditions for the existence of nullhomotopic and cylindrical tours in $\Ka{m}{n}$ respectively.

\subsection{Colorings and Parity}

Each of $\Ra{m}{n}$, $\Sc{m}$, and $\Pc$ have 2-colorings given by coloring the vertex $(a,b)$ red if $a+b$ is even and blue if $a+b$ is odd; for $\Ra{8}{8}$, this is the coloring of a chess board by light and dark squares. These 2-colorings on $\Sc{m}$ and $\Pc$ are used in the proof of statement (1) of Proposition \ref{prop:oddeven} below.

Note that the 2-coloring on $\Ra{m}{n}$ does not, in general, give a 2-coloring on $\Ma{m}{n}$ nor on $\Ka{m}{n}$, as some of the edges that have been added to create $\Ma{m}{n}$ and $\Ka{m}{n}$ may connect vertices of the same color.  We can give different colorings on $\Sc{m}$ and $\Pc$ by lifting these respective colorings on $\Ma{m}{n}$ and $\Ka{m}{n}$. Note again that these are not 2-colorings on $\Sc{m}$ and $\Pc$, as preimages under $\phi_M$ and $\phi_K$ of edges that are not in $\Ra{m}{n}$ may connect two vertices of the same color.  More formally, these colorings are described by letting each vertex $(a,b)$ in $\Sc{m}$ and $\Pc$ be green if $((-1)^{b \di n}a \mod m) + (b \mod n)$ is even and yellow if that value is odd. These colorings on $\Sc{m}$ and $\Pc$ are used in the proof of statement (2) of Proposition \ref{prop:oddeven} below.  

\begin{proposition}
$\;$
\begin{enumerate}
    \item Let $n$ be odd. If $m$ is odd and at least one of $m$ and $n$ is larger than 1, then there is no nullhomotopic tour on $\Ma{m}{n}$ nor on $\Ka{m}{n}$.
    \item Let $n$ be even.  If $m$ is even, there is no generating tour on $\Ma{m}{n}$ and no M\"obius tour on $\Ka{m}{n}$. If $m$ is odd, there is no cylindrical tour on $\Ka{m}{n}$.  
\end{enumerate} 
\label{prop:oddeven}
\end{proposition}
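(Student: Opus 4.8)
The plan is to convert each non-existence claim into a statement about the endpoint of a lifted tour via Corollary~\ref{cor:cover}, and then obstruct that endpoint by a parity argument. The unifying observation is that a knight's tour is a Hamiltonian cycle on the $mn$ vertices of $\Ma{m}{n}$ or $\Ka{m}{n}$, so its lift to the cover ($\Sc{m}$ or $\Pc$) is a path of exactly $mn$ edges, each an honest knight move. Hence, if the cover is equipped with a $2$-coloring along which every knight move switches colors, the color of the lift's endpoint is forced: it equals the color of the basepoint $\OO$ when $mn$ is even and is opposite when $mn$ is odd. To rule out a tour type it then suffices to exhibit such a proper coloring assigning the endpoint prescribed by Corollary~\ref{cor:cover} the ``wrong'' color.

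For statement (1) I would use the red/blue coloring, in which $(a,b)$ is colored by the parity of $a+b$. Every knight move changes $a+b$ by an odd amount, so this is a proper $2$-coloring of both $\Sc{m}$ and $\Pc$, and $\OO$ is red. With $m$ and $n$ both odd, $mn$ is odd, so the endpoint of any lifted tour is blue; but by Corollary~\ref{cor:cover}(1),(3) a nullhomotopic tour requires the endpoint $\OO$, which is red. This contradiction rules out nullhomotopic tours on both surfaces. The hypothesis that one of $m,n$ exceeds $1$ removes only the degenerate $1\times 1$ board, which must be treated separately; in every remaining case $mn$ is odd, and the width-one M\"obius boards carry no tour at all by Theorem~\ref{thm:watkinsms}, so the claim holds there vacuously.

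For statement (2), first take $m$ even. A generating tour on $\Ma{m}{n}$ and a M\"obius tour on $\Ka{m}{n}$ both require their lift to end at $(m-1,\pm n)$ (Corollary~\ref{cor:cover}(2),(5)), which I would obstruct with the green/yellow coloring. Because $m$ is even, any residue modulo $m$ has the same parity as the integer reduced and $(-1)^k a\equiv a\pmod 2$, so the green/yellow color of $(a,b)$ reduces to the parity of $a+(b\bmod n)$. A knight move then changes this quantity by $c+d$, or by $c+d+n$ when it crosses into a neighboring copy of the fundamental domain; since $c+d$ is odd and $n$ is even, both are odd, so the coloring is proper. As $\OO$ is green and $mn$ is even, the endpoint must be green, whereas the same formula gives value $m-1\equiv 1\pmod 2$, i.e.\ yellow, at $(m-1,\pm n)$. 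This contradiction eliminates generating and M\"obius tours. (One could equally use the red/blue coloring here, as $(m-1,\pm n)$ has odd coordinate sum; the green/yellow coloring is simply the version intrinsic to the surfaces.)

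Finally, take $m$ odd and $n$ even and consider cylindrical tours on $\Ka{m}{n}$, whose lift must end at $(\pm m,0)$ (Corollary~\ref{cor:cover}(4)). Here the green/yellow coloring is no longer proper and fails even to separate $\OO$ from $(\pm m,0)$, so I would return to the red/blue coloring, which is always proper. Since $n$ is even, $mn$ is even and the endpoint must be red; but $(\pm m,0)$ has coordinate sum $m$, which is odd and hence blue. The contradiction eliminates cylindrical tours. I expect the main obstacle to be the edge-by-edge parity computation that pins down exactly when the green/yellow coloring is proper; its heart is the elementary fact that reduction modulo $m$ preserves parity if and only if $m$ is even, which is precisely what separates the $m$-even and $m$-odd regimes of statement (2) and determines which coloring supplies the obstruction in each.
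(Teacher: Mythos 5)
Your proof is correct, and for statement (2) it takes a genuinely different---and simpler---route than the paper. For statement (1) both arguments are the standard checkerboard count on the cover (the paper simply defers to Forrest--Teehan), so there is nothing to compare. For statement (2), the paper works with the green/yellow coloring, which it asserts is \emph{not} a proper $2$-coloring: edges whose endpoints lie in different fundamental domains (those with $b_1 \di n \ne b_2 \di n$, resp.\ $a_1 \di m \ne a_2 \di m$) join same-colored vertices. It then notes that the prescribed endpoint forces the lift to cross between fundamental domains a net of $\pm 1$ times, hence to traverse an odd number of monochromatic edges and therefore an odd number of color-switching ones, so the two ends of the lift get opposite colors---contradicting that both ends are preimages of the basepoint and so share a color. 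You instead use a coloring that is proper on all of $\Sc{m}$ or $\Pc$: since $mn$ is even the lift must end on the color of $\OO$, yet $(m-1,\pm n)$ (with $m,n$ even) and $(\pm m,0)$ (with $m$ odd) have odd coordinate sum. This is airtight and shorter, dispensing with both the fiber-constancy observation and the edge count. One caveat: your claim that the green/yellow coloring is proper when $m$ is even flatly contradicts the paper. The discrepancy traces to the paper's formula for $\phi_M$: read literally, $(-1)^{b\di n}a \bmod m$ sends $(m-1,n)$ to $(1,0)$ rather than to a preimage of $\OO$, which is inconsistent with Corollary \ref{cor:cover}; the geometrically intended map is $a \mapsto m-1-a$, under which the green/yellow parity picks up an extra $b \di n$ term and the coloring is indeed improper, as the paper's proof requires. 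Your argument is immune to this because it never uses that green/yellow is the pullback of anything---only that the explicit coloring you wrote down is proper and separates $\OO$ from the target vertex---and because, as you observe, the plain red/blue coloring already does the whole job.
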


\begin{proof}
The proof of the statement (1) is analogous to the proofs in Proposition 3.4 of \cite{forrest-teehan}.

For statement (2), let $n$ be even.  Suppose $m$ is even and, for the sake of contradiction, that $f \colon (I,0) \to (\Ma{m}{n}, \OO)$ is a generating tour with lift $\tilde{f}$ and $\tilde{f}(0) = \overline{0}$. Note that, by Corollary \ref{cor:cover}, the starting and ending vertices of $\tilde{f}$ have the same color by the green-yellow coloring discussed above.  In the case when $m$ and $n$ are both even, the green-yellow coloring is not a 2-coloring. The edges from $(a_1,b_1)$ to $(a_2,b_2)$ for which $b_1 \di n \ne b_2 \di n$ are adjacent to two vertices of the same color, while all other edges are adjacent to two vertices of different colors. Therefore, $\tilde{f}$ must traverse an odd number of edges that are adjacent to vertices of the same color. Note that $\tilde{f}$ traverses an even number of edges total and so an odd number of the edges traversed by $\tilde{f}$ are adjacent to two vertices of different colors. Since the starting and ending vertices of $\tilde{f}$ must be of opposite colors, we have a contradiction.  The same proof applies to show that there are no M\"obius tours on $\Ka{m}{n}$.

Now suppose that $m$ is odd and, for the sake of contradiction, suppose that $g \colon (I,0) \to (\Ka{m}{n}, \OO)$ is a cylindrical tour with lift $\tilde{g}$ and $\tilde{g}(0) = \overline{0}$.  Note that, by Corollary \ref{cor:cover}, the starting and ending vertices of $\tilde{g}$ have the same color by the green-yellow coloring discussed above.  In the case when $m$ is odd and $n$ is even, the green-yellow coloring is not a 2-coloring. The edges from $(a_1,b_1)$ to $(a_2,b_2)$ for which $a_1 \di m \ne a_2 \di m$ are adjacent to two vertices of the same color, while all other edges are adjacent to two vertices of different colors. The lift $\tilde{g}$ must traverse an odd number of the edges adjacent to vertices of the same color but an even number of edges total, which leaves an odd number of the traversed edges that are adjacent to vertices of different colors. Therefore, the starting and ending vertices of $\tilde{g}$ have opposite colors, which is a contradiction.
\end{proof}

\subsection{\bf Inclusion and Replacement Paths} 

Theorem \ref{thm:cover} shows that any operations that we can perform on paths that do not alter their endpoints will not change their topology.  In this and the following subsection, we show that this idea allows us to, given a tour on a M\"obius strip, widen the underlying  M\"obius strip in many instances.

For the remainder of Section 3, let $m \geq 7$.  For $\Sc{m}$ and $\Ma{m}{n}$, we define two disjoint subgraphs:
\begin{itemize}
    \item the inner graph consisting of all vertices $(a,b)$ with $1 < a < m-2$ and all edges between those vertices; and
    \item the outer graph consisting of all vertices that are not in the inner graph and all edges between those vertices.
\end{itemize}

In particular, the outer graph is the subgraph determined by the two outermost rows on each side of $\Ma{m}{n}$ or $\Sc{m}$, and we denote these $\ma{n}$ and $\Tma$ respectively; implicit in this notation is the observation that the structure of the outer graphs of $\Ma{m}{n}$ and $\Sc{m}$ are independent of $m$.  The graph $\Tma$ has 8 connected components, each of which is an infinite graph of vertices of valence 2.  Of these connected components, the 4 {\it left components} are in columns 0 and 1, and the 4 {\it right components} are in columns $m-2$ and $m-1$.  For each connected component, the $y$-coordinates of every vertex in the connected component have the same parity and so we name the components by this parity: there are two {\it even} left components and two {\it odd} left components. The same is true for right components. Frame (A) of Figure \ref{fig:wide} shows the inner and outer graphs of $\Sc{7}$ illustrated as the thick black and thick red subgraphs respectively.

\begin{figure}[t]
\centering
\begin{picture}(300,206)
\put(38,105){\begin{subfigure}[b]{0.26\textwidth}
\fbox{\includegraphics[width = .9\textwidth]{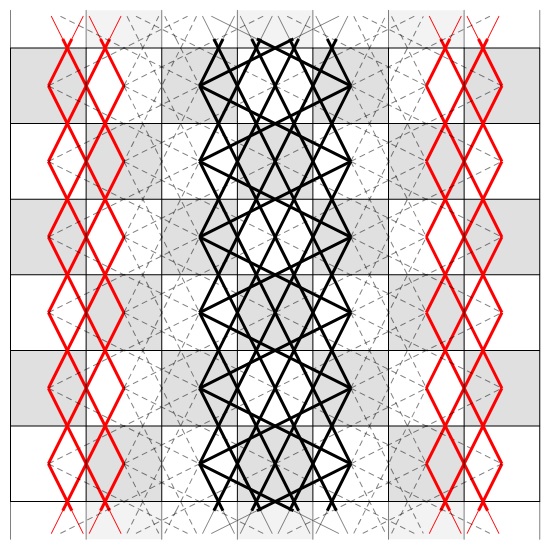}}
\caption{}
\end{subfigure}}
\put(136,0){\begin{subfigure}[b]{0.407\textwidth}
\fbox{\includegraphics[width = .9\textwidth]{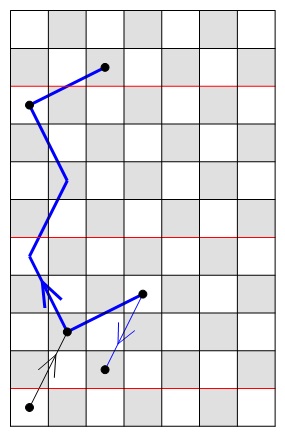}}
\caption{\:}
\end{subfigure}}
\put(38,0){\begin{subfigure}[b]{0.26\textwidth}
\fbox{\includegraphics[width = .9\textwidth]{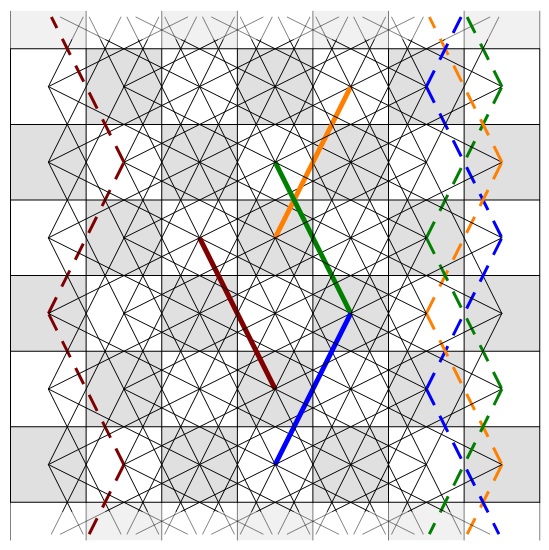}}
\caption{}
\end{subfigure}}

\put(160,30){$v'_i$}
\put(150,61){$v'_t$}
\put(197,81){$v_i$}
\put(190,46){$v_t$}
\put(145,170){$v'_{i,2}$}
\put(175,157){$v_{t,2}$}
\put(147,45){$e'$}
\put(195,65){$e$}

\end{picture}
\caption{Frame (A): A portion of $\Sc{7}$, where the inner graph consists of the thick black edges and the outer graph $\Tma$ consists of the thick red edges. Frame (B): A directed edge (the thin blue edge) in $\Sc{7}$, which we take as a preimage of an edge  of $\Ma{7}{4}$,  with corresponding replacement path (the thick blue edge path). The vertices and edges are labeled in agreement with the definition of replacement path.  Frame (C): An extending collection of edges in $\Sc{7}$ for even $n$.  The thick solid edges form an extending collection, and the images of the corresponding dashed edges under $\phi_M$ give a bijection between the edges in the extending collection and the connected components of $\ma{n}$.}
\label{fig:wide}
\end{figure}

The structure of $\ma{n}$ depends on the parity of $n$.  When $n$ is odd, $\ma{n}$ consists of two cycles of length $2n$ and the preimage of each cycle under $\phi_M$ consists of four components of $\Tma$; the left even and right odd components are the preimage of one cycle while the right even and left odd components are the preimage of the other cycle.  When $n$ is even, $\ma{n}$ is four disjoint cycles of length $n$, and the preimage of each cycle under $\phi_M$ consists of two components of $\Tma$; the preimages of two the components of $\ma{n}$ are made up of one left even and one right even component, while the preimages of the other two components are one left odd and one right odd component.  Let $N$ denote the length of the cycles on $\ma{n}$. 

Observe that $\Sc{m-4}$ and $\Ma{m-4}{n}$ include as the inner graphs of $\Sc{m}$ and $\Ma{m}{n}$ respectively and that these inclusions commute with the covering maps $\phi_M$. These inclusions have analogs for the surfaces $M$ and $S$.  Most importantly, the $(m-4) \times n$ M\"{o}bius strip includes in the $m \times n$ strip by the map $i(x,y) = (x+2,y)$.  This inclusion together with the inclusion of $\Ma{m-4}{n}$ as the inner graph of $\Ma{m}{n}$ commute with the inclusion maps of $\Ma{m-4}{n}$ and $\Ma{m}{n}$ into their corresponding M\"{o}bius strips. Lastly, $i$ induces an isomorphism on the corresponding fundamental groups of the M\"{o}bius strips.

By inclusion, a path in $\Sc{m-4}$ creates a path in the inner graph of $\Sc{m}$.  Below, we describe a splicing procedure that, given a path on the inner graph of $\Sc{m}$, creates a new extended path on $\Sc{m}$.  In particular, consider an edge $e$ in $\Sc{m}$ that is adjacent to vertices $(2,a)$ and $(3,a')$.  Note that $a$ and $a'$ have the same parity, and we will label the edge with that parity. For instance, if $a$ and $a'$ are even, we will call $e$ an {\it even edge}. We will also call $e$ a {\it left edge}; this same discussion applies to edges adjacent to vertices in columns $m-3$ and $m-4$, and we will call these {\it right edges}.

The edge $e$ is part of exactly two 4-cycles containing an edge in $\Tma$. Let $e'$ be one of those edges. Note that $e'$ is in a left component of $\Tma$ and that this component and edge $e$ have opposite parities.  We now give $e$ a direction and let $v_i$ and $v_t$ be the initial and terminal vertices of $e$ respectively. We say that $e$ is traversed {\it upward} if the second coordinate of $v_t$ is greater than the second coordinate of $v_i$ and {\it downward} otherwise.  This direction determines a direction on $e'$ as part of the 4-cycle; the direction on $e'$ is opposite the direction on $e$. Let $v'_i$ and $v'_t$ be the initial and terminal vertices of $e'$.

Now suppose that the original path in $\Sc{m-4}$ maps to a path on $\Ma{m}{n}$ under $\phi_M$. Removing all preimages of $\phi_M(e')$ from the connected component of $\Tma$ containing $e'$ creates a collection of length $N-1$ simple edge paths.  One of these paths contains $v'_t$ as an end point and a preimage of $\phi_M(v'_i)$, which we denote as $v'_{i,2}$, as the other endpoint.  Note that $v'_{i,2}$ shares an edge with the preimage of $\phi_M(v_t)$ given by the coordinates $(a,b + 2N)$ when $e$ is traversed downward and $(a, b-2N)$ when $e$ is traversed upward.  We denote this vertex $v_{t,2}$. The {\it replacement path} is the length $N+1$ edge path that traverses the edge from $v_i$ to $v'_t$, then the unique edge path in $\Tma$ from $v'_t$ to $v'_{i,2}$, and lastly the edge from $v'_{i,2}$ to $v_{t,2}$.  Replacement paths have the following properties:
\begin{itemize}
    \item The image of the replacement path under $\phi_M$ is a simple path in $\ma{n}$ that contains every vertex in the connected component of $\ma{n}$ that contains $\phi_M(e')$.
    \item The edge $\phi_M(e)$ is incident to $\phi_M(v_i)$ and $\phi_M(v_{t,2})$. 
\end{itemize}

\begin{example}
\label{ex:replacement}
Frame (B) of Figure \ref{fig:wide} shows a directed edge $e$ (the thin blue edge) in the inner graph of $\Sc{7}$ which we consider as a preimage of an edge in $\Ma{7}{4}$.  A replacement path for $e$ is shown by the thick blue edge path, and the edge $e'$ is the thin black edge.  The vertices $v_i$, $v_t$, $v'_i$, $v'_t$, $v'_{i,2}$, and $v_{t,2}$ are labeled.  The horizontal red lines are the preimage of the line $y=0$ in the $7 \times 4$ M\"{o}bius strip under $p_M$.  While we will generally construct replacement paths for edges whose images under $\phi_M$ are part of a knight's tour, this is not necessary.  Indeed, there is no knight's tour on $\Ma{3}{4}$ by Theorem \ref{thm:watkinsms}; however, we can still define a replacement path for $e$.

The majority of the figures in this work are paths in $\Sc{m}$ or $\Pc$ that map to a knight's tour in $\Ma{m}{n}$ or $\Ka{m}{n}$; the graphs $\Sc{m}$ and $\Pc$ are shown as subsets $S$ and $P$ respectively.  Red horizontal lines are the preimage of line $y=0$ in $M$ and $K$ under the covering maps $p_M$ and $p_K$ while blue vertical lines are the preimages of $x=0$ in $K$ under $p_K$.  That is, the red lines and blue lines partition the universal cover into fundamental domains.  The white dot is the starting point of the path and is typically $\overline{0}$.
\end{example}

\subsection{Extendable Tours}
In this subsection, we consider edge paths in the inner graph of $\Sc{m}$ whose image under $\phi_M$ is a knight's tour in $\Ma{m-4}{n}$.  We examine sets of left and right edges in $\Sc{m}$ that enable the use of replacement paths to extend the original edge path and create a knight's tour in $\Ma{m}{n}$. Recall that the image under $\phi_M$ of each replacement path contains exactly the vertices of a connected component of $\ma{n}$.  A collection of edges for which replacement paths can be selected so that each connected component of $\ma{n}$ is the image of exactly one replacement path is {\it extending}.  When $n$ is even, a collection of left and right edges is extending if and only if it consists of two even edges and two odd edges.  When $n$ is odd, a collection is extending if and only if it consists of exactly one edge that is left even or right odd and exactly one edge that is left odd or right even. Frame (C) of Figure \ref{fig:wide} shows an extending collection of edges in $\Sc{7}$ in the case when $n$ is even.  Generally, when the term extending is used, it will be clear from context if the collection of edges we are considering is extending for even $n$ or for odd $n$.

Now we can state our primary technical result enabling induction on the width of M\"{o}bius strip boards:

\begin{proposition}
Suppose that $f \colon (I,0) \to (\Ma{m-4}{n}, \OO)$ is a knight's tour with lift $\tilde{f} \colon (I,0) \to (\Sc{m-4}, \OO)$, where $\Sc{m-4}$ is the inner graph of $\Sc{m}$.  If $\tilde{f}$ traverses an extending collection of edges, half of which are traversed upward and the other half downward, then there exists a knight's tour $f'$ on $\Ma{m}{n}$ so that $\tilde{f}$ and $\tilde{f'}$ have the same starting and ending vertices.
\label{prop:extending}
\end{proposition}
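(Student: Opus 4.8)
The plan is to build the wider tour $f'$ by locally rerouting $f$ along replacement paths and then to verify, via Theorem~\ref{thm:cover} and Corollary~\ref{cor:cover}, that this rerouting leaves the endpoints of the lift unchanged. Under the inclusion of $\Ma{m-4}{n}$ as the inner graph of $\Ma{m}{n}$, the tour $f$ visits exactly the inner vertices and misses every vertex of $\ma{n}$. For each edge of the extending collection traversed by $\tilde{f}$, I would replace the corresponding edge $\phi_M(e)$ of $f$ by a replacement path, choosing the auxiliary edges $e'$ so that the images under $\phi_M$ of the replacement paths cover each connected component of $\ma{n}$ exactly once; this selection is possible precisely because the collection is extending. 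Since the interior vertices of each replacement path lie in $\ma{n}$ and distinct replacement paths cover distinct components, the resulting closed walk $f'$ visits every inner vertex once (inherited from $f$) and every outer vertex once.

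Next I would check that $f'$ is genuinely a tour. By the stated properties of replacement paths, the replacement path for a directed edge from $\phi_M(v_i)$ to $\phi_M(v_t)$ begins at $\phi_M(v_i)$ and ends at $\phi_M(v_{t,2})$; since $\phi_M(e)$ is incident to both $\phi_M(v_t)$ and $\phi_M(v_{t,2})$, these coincide. Hence splicing each replacement path in place of its edge keeps $f'$ a closed walk in $\Ma{m}{n}$, and combined with the vertex count above, $f'$ is a Hamiltonian cycle, i.e.\ a knight's tour.

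The crux is the endpoint computation. Lift $f'$ to $\tilde{f'}$ from $\OO$ and compare with $\tilde{f}$, writing $f'$ as the segments of $f$ lying between the replaced edges interleaved with the replacement paths. The deck group of $\phi_M \colon \Sc{m} \to \Ma{m}{n}$ is infinite cyclic, generated by $\tau(a,b) = (m-1-a, b+n)$, and hence abelian; the endpoint displacement of $\tilde{f'}$ is the product, in order, of the displacements of these pieces. Each replacement path has the same displacement as the edge it replaces except for an extra factor: its terminal vertex $v_{t,2}$ has coordinates $(a, b \pm 2N)$ while the edge's terminal vertex is $(a,b)$, so the extra factor is the pure vertical translation $\tau^{\pm 2N/n}$, an even power of $\tau$ because $2N$ is $2n$ when $n$ is even and $4n$ when $n$ is odd, with the sign determined by whether the edge is traversed downward or upward. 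As the group is abelian, these extra factors collect into $\tau^{(2N/n)(d - u)}$, where $d$ and $u$ count the downward and upward edges of the extending collection. The hypothesis that half are traversed upward and half downward gives $u = d$, so the product is the identity and $\tilde{f'}(1) = \tilde{f}(1)$; since both lifts start at $\OO$, Corollary~\ref{cor:cover} delivers the conclusion.

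I expect the main obstacle to be exactly this last cancellation: one must confirm that every replacement contributes precisely a pure vertical translation by $\pm 2N$ and that these contributions commute past the reflections introduced by the segments of $\tilde{f}$ that cross the M\"obius identification. Both points are resolved by the observation that $2N$ is always an even multiple of $n$, so each contribution is an even power of $\tau$ and the whole deck group is abelian; this is what turns the upward/downward bookkeeping into exact cancellation independent of the order in which the replacements occur. By comparison, the Hamiltonicity argument of the first two paragraphs is routine, resting directly on the stated properties of replacement paths and on the definition of an extending collection.
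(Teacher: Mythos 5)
Your proposal is correct and follows essentially the same route as the paper: splice a replacement path in place of each edge of the extending collection, use the extending property to see that each component of $\ma{n}$ (hence each outer vertex) is covered exactly once while the inner vertices are inherited from $f$, and track the endpoint of the lift through the vertical shifts of $\pm 2N$ introduced by each replacement, which cancel because half the edges are traversed upward and half downward. The paper carries out that last cancellation by explicit bookkeeping of cumulative shifts (the quantities $U_i$ and $D_i$ applied to the segments $P_i$ and $R_i$) rather than by your deck-group phrasing, but the underlying computation is identical.
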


\begin{proof}
    Consider the case when $n$ is odd and fix an extending collection of edges $e_1$ and $e_2$ traversed by $\tilde{f}$ in order. Let $v_0, v_1, \ldots v_{mn-4n}$ be the vertices traversed by $\tilde{f}$ in order.  We let $k_i$ be the index of the initial vertex of $e_i$; that is, $\tilde{f}$ traverses $e_i$ from $v_{k_i}$ to $v_{k_i + 1}$. Additionally, for each $e_i$, we choose a replacement path $R_i$ so that each connected component of $\ma{n}$ is the image of exactly one replacement path. We will construct $\tilde{f'}$ by replacing each edge in the extending collection with a replacement path.  Note that each instance of replacement applies a vertical shift to the remainder of the path. For example, when $e_1$ is replaced by $R_1$, we apply a vertical shift of $\pm 4n$ to the portion of $\tilde{f}$ traversed after $e_1$, where the direction of the shift is opposite from the direction in which $\tilde{f}$ traversed $e_1$.

    More formally, let $U_i$ be the number of edges in the extending collection that are traversed upward before $v_i$ is reached and let $D_i$ be the analogous value for edges in the collection that are traversed downward.  We define $P_0$ to be the edge path $\tilde{f}$ from $v_0$ to $v_{k_1}$, $P_1$ to be the edge path $\tilde{f}$ from $v_{k_1 + 1}$ to $v_{k_2}$, and $P_2$ to be the edge path from $v_{k_2 +1}$ to $v_{mn-4n}$. For $i = 1$ or $2$, we set $P'_i$ to be the edge path obtained by shifting $P_i$ vertically by $2N(D_{k_i + 1} - U_{k_i + 1})$ and $R'_i$ to be the edge path obtained by shifting $R_i$ vertically by $2N(D_{k_i} - U_{k_i})$. Then $\tilde{f'}$ is the concatenation of $P_0$, $R_1$, $P'_1$, $R'_2$, and $P'_2$. Note that $D_{k_2} = D_{k_{1} + 1}$ and $U_{k_2} = U_{k_{1} + 1}$, so the end point of $P'_{1}$ is the initial point of $R'_2$. Additionally, the end point of $R'_2$ is the initial point of $P'_{2}$ by the definition of replacement path. Note that $\tilde{f'}$ and $\tilde{f}$ start at the same point and that the ending vertices of the paths $\tilde{f}$ and $\tilde{f'}$ have been vertically shifted by $4n(D_{k_2 + 1} - U_{k_2+1})$.  However, $\tilde{f}$ traverses the same number of edges in the extending collection upward as it does downward, so $D_{k_2+1} = U_{k_2 + 1}$.  Hence, $\tilde{f}$ and $\tilde{f'}$ end at the same vertex.

    To see that $\phi_M(\tilde{f'}) = f'$ is a knight's tour, note first that $\phi_M(P_i) = \phi_M(P'_i)$ and that $f'$ traverses each vertex in $\Ma{m}{n} \setminus \ma{n}$ exactly once.  Further, each vertex in $\ma{n}$ is traversed exactly once by a path in $\phi_M( R_1 \cup R'_2 )$, which shows that $f'$ is a knight's tour.  Lastly, $\phi_M(\tilde{f})$ is a circuit, as $\tilde{f}$ and $\tilde{f'}$ share the same starting and ending vertices.  

    The case when $n$ is even is identical except that the extending collection consists of 4 edges and the vertical shifts occur in multiples of $2n$.
\end{proof}

\begin{definition}
    We say that a path $f \colon (I,0) \to (\Ma{m-4}{n}, \OO)$ meeting the criteria of Proposition \ref{prop:extending} is {\it extendable}.
\end{definition}

\begin{remark}
\label{rem:figures2}
Within the figures in this work, some of the preimages of knight's tours shown in $\Sc{m}$ have extending edge collections.  When an extending edge collection in a knight's tour is used in our arguments, the figure shows the collection in blue. If no extending edge collection has been identified, that does not mean that no such collection exists.
\end{remark}

\begin{remark}
\label{rem:ind}
Given an extendable path $f$, when the path $\tilde{f'}$ constructed in Proposition \ref{prop:extending} is taken as a path in the inner graph of $\Sc{m+4}$, it also traverses an extending collection of edges. The second edges traversed in each $R'_i$ form an extending collection, and $\tilde{f'}$ traverses half of these edges upward and half downward. This observation shows that $\tilde{f'}$ is extendable and provides an induction argument for Corollary \ref{cor:widen}.
\end{remark}

\begin{example}
\label{ex:widen4x4}
Figure \ref{fig:widenexample4x4} gives an example of Proposition \ref{prop:extending}.  Specifically, frame (A) of Figure \ref{fig:widenexample4x4} is a lift $\tilde{f}$ of a nullhomotopic knight's tour in $\Ma{4}{4}$ to $\Sc{4}$.  The blue edges are an extending collection in $\Sc{8}$.  Note that $\tilde{f}$ traverses the first two blue edges upward and the last two blue edges downward, so the original knight's tour is extendable.  Frame (B) of Figure \ref{fig:widenexample4x4} shows $\tilde{f'}$, the result of applying Proposition \ref{prop:extending} to frame (A).  That is, frame (B) is the preimage in $\Sc{8}$ of a nullhomotopic tour in $\Ma{8}{4}$.  The blue edges in frame (B) are the extending collection described in Remark \ref{rem:ind}.  Specifically, these edges are an extending collection in the inner graph of $\Sc{12}$. They enable the knight's tour in frame (A) to act as the base case for an induction argument creating a nullhomotopic tour in $\Ma{4 + 4k}{4}$, where $k$ is any non-negative integer.  Note that the tour created by Proposition \ref{prop:extending} depends on the base point, and we denote this point by the white dot; in frame (A), this is $(0,0)$, while in frame (B), this is $(2,0)$.  As we discuss in Corollary \ref{cor:widen} and Remark \ref{rem:basepoint}, we can alter the starting point of $f'$ without changing the topology of the tour.   
\end{example}

\begin{figure}[t]
\centering
\begin{subfigure}[b]{0.2\textwidth}
\fbox{\includegraphics[width = .9\textwidth]{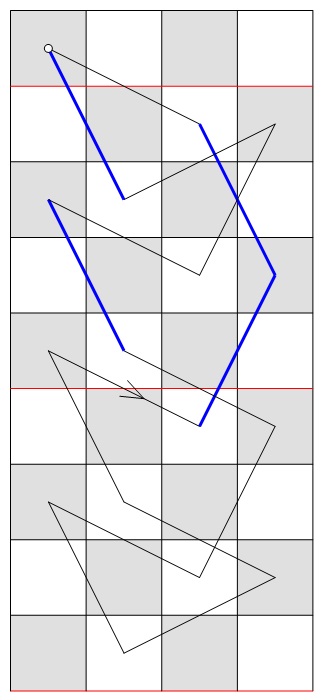}}
\caption{}
\end{subfigure}
\quad \qquad
\begin{subfigure}[b]{0.22\textwidth}
\fbox{\includegraphics[width = .9\textwidth]{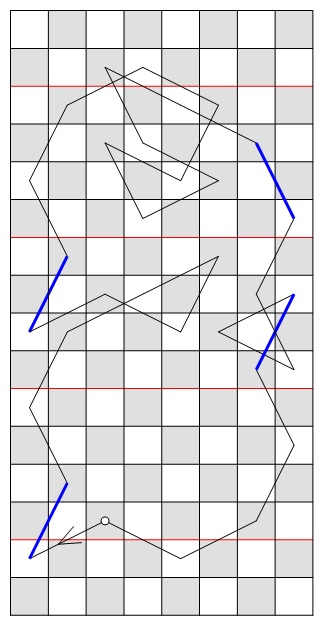}}
\caption{}
\end{subfigure}
\caption{Lifts of nullhomotopic tours in $\Ma{4}{4}$ and $\Ma{8}{4}$ discussed in Example \ref{ex:widen4x4}.  Frame (A): The preimage in $\Sc{4}$ of a nullhomotopic tour in $\Ma{4}{4}$ where the white dot is $(0,0)$ and the blue egdes are an extending collection in $\Sc{8}$.  Frame (B): The preimage in $\Sc{8}$ of a nullhomotopic tour in $\Ma{8}{4}$ with base point given by white dot at $(2,0)$.  This is the result of applying the methods of  Proposition \ref{prop:extending} to the path in Frame (A).  The blue edges are the extending collection in $\Sc{12}$ described in Remark \ref{rem:ind}.}
\label{fig:widenexample4x4}
\end{figure}

\begin{corollary}
\label{cor:widen}
Let $f \colon (I,0) \to (\Ma{m-4}{n},\OO)$ be an extendable knight's tour.  
\begin{itemize}
    \item If $f$ is nullhomotopic, then there exists a nullhomotopic tour on $\Ma{m+4k}{n}$ for each non-negative integer $k$.
    \item If $f$ is generating, then there exists a generating tour on $\Ma{m+4k}{n}$ for each non-negative integer $k$.
\end{itemize}
\end{corollary}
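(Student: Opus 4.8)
The plan is to induct on $k$, using Proposition \ref{prop:extending} to widen by $4$ at each stage and Corollary \ref{cor:cover} to certify that the homotopy type survives the widening. It is convenient to write $\tilde{\tau}_w$ for the generator of the deck transformation group of $\phi_M \colon \Sc{w} \to \Ma{w}{n}$; a short computation from the formula for $\phi_M$ gives $\tilde{\tau}_w(a,b) = (w-1-a,\,b+n)$, and in this language Corollary \ref{cor:cover} says that a tour based at $\OO$ is nullhomotopic precisely when its lift ends at $\tilde{\tau}_w^{0}(\OO)=\OO$ and generating precisely when it ends at $\tilde{\tau}_w^{\pm 1}(\OO)=(w-1,\pm n)$.

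First I would treat the base case $k=0$. Proposition \ref{prop:extending} applied to the extendable tour $f$ yields a tour $f'$ on $\Ma{m}{n}$ whose lift $\tilde{f'}$ in $\Sc{m}$ shares its initial and terminal vertices with $\tilde{f}$. The subtlety is that $\Sc{m-4}$ sits inside $\Sc{m}$ as the inner graph via $i(a,b)=(a+2,b)$, so $\tilde{f'}$ begins at $i(\OO)=(2,0)$ rather than at $\OO$, and the vertex defining a generating tour has moved from $(m-5,\pm n)$ to $(m-1,\pm n)$. These two shifts are reconciled by the deck equivariance $i\circ\tilde{\tau}_{m-4}=\tilde{\tau}_m\circ i$, which one checks directly and which is the vertex-level form of the fact, recorded before Proposition \ref{prop:extending}, that $i$ induces an isomorphism on the M\"obius strips' fundamental groups. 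Thus, if $f$ is nullhomotopic then $\tilde{f'}(1)=\tilde{f'}(0)=\tilde{\tau}_m^{0}\big(\tilde{f'}(0)\big)$, while if $f$ is generating then $\tilde{f}(1)=\tilde{\tau}_{m-4}^{\pm 1}(\tilde{f}(0))$ forces $\tilde{f'}(1)=\tilde{\tau}_m^{\pm 1}(\tilde{f'}(0))$.

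In either case $\tilde{f'}$ runs from a preimage $q$ of the basepoint to $\tilde{\tau}_m^{\,j}(q)$ with $j=0$ or $j=\pm1$, so as a free loop $f'$ lifts, from any preimage $q'$ of any of its vertices, to a path ending at $\tilde{\tau}_m^{\,j}(q')$. Since $f'$ is a Hamiltonian cycle it passes through $\OO$; re-reading it as a loop based at $\OO$ therefore produces a tour whose lift starts at $\OO$ and ends at $\tilde{\tau}_m^{\,j}(\OO)$, which is $\OO$ when $j=0$ and $(m-1,\pm n)$ when $j=\pm1$. By Corollary \ref{cor:cover} this tour is nullhomotopic, respectively generating, and the legitimacy of this change of starting vertex is exactly the content of Remark \ref{rem:basepoint}. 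This disposes of $k=0$.

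For the inductive step I would invoke Remark \ref{rem:ind}: regarded in the inner graph of $\Sc{m+4}$, the path $\tilde{f'}$ again traverses an extending collection with half of its edges upward and half downward, so $f'$ is extendable and of the homotopy type just established. Applying the base-case argument to $f'$ gives a tour of the same type on $\Ma{m+4}{n}$, and iterating produces the desired tours on $\Ma{m+4k}{n}$ for every non-negative integer $k$. The point I expect to demand the most care is precisely the base-case bookkeeping of the previous two paragraphs: one must be sure that Proposition \ref{prop:extending}'s preservation of lift endpoints genuinely preserves the homotopy type, even though both the generating target vertex and the basepoint shift when the width grows, and the equivariance of the inclusion $i$ is exactly what makes these two shifts cancel.
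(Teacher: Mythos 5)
Your proof is correct and follows essentially the same strategy as the paper's: iterate Proposition \ref{prop:extending} using Remark \ref{rem:ind} to produce, for each $k$, a tour on $\Ma{m+4k}{n}$ whose lift has the same endpoints as $\tilde{f}$, and then re-base the resulting Hamiltonian cycle at $\OO$. The only divergence is in how the re-basing is certified: you verify the deck-equivariance $i\circ\tilde{\tau}_{m-4}=\tilde{\tau}_m\circ i$ and use that the translation element of a free loop is basepoint-independent (the deck group being abelian), whereas the paper invokes the change-of-basepoint isomorphism of $\pi_1(M)$ induced by an initial subpath of the tour, with Remark \ref{rem:basepoint} supplying the explicit graph-level construction --- two equivalent justifications of the same step.
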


\begin{proof}
We identify $\Sc{m+4k}$ with the inner graph of $\Sc{m+4k+4}$.  Note that by Proposition \ref{prop:extending} and Remark \ref{rem:ind}, for each $k$ there exists a knight's tour $f_k$ on $\Ma{m+4k}{n}$ so that $\tilde{f}_k$ has the same starting and ending points as $\tilde{f}$.  By Theorem \ref{thm:cover}, $p_M \circ i_S(\tilde{f})$ and $p_M \circ i_S(\tilde{f}_k)$ are path homotopic in $M$, so $i_M(f)$ and $i_M(f_k)$ are path homotopic in $M$ where $i_M$ is the inclusion of $\Ma{m+4k}{n}$ into the $(m+4k)\times n$ M\"{o}bius strip $M$.  The inclusion of the $(m-4) \times n$ M\"{o}bius strip into $M$ induces an isomorphism on the fundamental groups, as it is a composition of $k + 1$ isomorphisms. We see that $[i_M(f_k)]$ is equal to $[i_M(f)]$ in $\pi_1(M,i_M(2(k+1),0))$.  Since $M$ is path-connected, there is an isomorphism from $\pi_1(M,i_M(2(k+1),0))$ to $\pi_1(M,v)$ given by concatenating loops based at $i_M(2(k+1),0)$ with paths and inverse paths from $i_M(2(k+1),0)$ to $v = i_M(0,0)$.  The image of the initial subpath of $f_k$ from $(2(k+1),0)$ to $(0,0)$ under $i_M$ is such a path, and the result of this concatenation is homotopic to $i_M(f_k)$ with initial point $v$.  Hence, taking the initial point of $f_k$ to be $(0,0)$ in $\Ma{m+4k}{n}$ produces a tour that is nullhomotopic if $f$ is nullhomotopic and generating if $f$ is generating.
\end{proof}

\begin{remark}
\label{rem:basepoint}
In Corollary \ref{cor:widen}, consider the case when $f$ is generating.  In this case, note that $\tilde{f}_k$ begins at $(2(k+1),0)$ and ends at $(m+2k-2,n)$.  Let $P_1$ be the subpath starting at $(2(k+1),0)$ and ending at a preimage $(a,b)$ of $(0,0)$ under $p_M$, and let $P_2$ be the remainder of $f_k$.  We can construct a generating tour based at $(0,0)$ by concatenating $P'_2$ followed by $P'_1$.  Here, $P'_2$ is given by transforming $P_2$ to start at $(0,0)$; this transformation is a translation by $(0,-b)$ when $a = 0$ and a flip about the vertical line $y = (m + 4k - 1)/2$ followed by the translation by $(0,-b)$ when $a = m + 4k -1$.  Similarly, $P'_1$ is given by transforming $P_1$ to start at the end point of $P'2$.  When $a = m + 4k -1$, we translate $P_1$ by $(0,-b+n)$, while when $a=0$, we apply a reflection about the vertical line $y = (m + 4k - 1)/2$ followed by translation by $(0,-b+n)$. This new edge path has the same image under $\phi_M$ as $\tilde{f}_k$, starts at $(0,0)$, and ends at $(m+4k-1,n)$. Hence, this is a generating tour.
\end{remark}

\section{\bf Nullhomotopic Tours on M\"{o}bius Strips}

In this section, we characterize the dimensions of M\"{o}bius strips that admit nullhomotopic knight's tours.

\begin{theorem}
\label{thm:MSNH}
The multigraph $\Ma{m}{n}$ supports a nullhomotopic tour if and only if none of the following are true:
\begin{itemize}
\item $m$ and $n$ are both odd and at least one of $m$ and $n$ is greater than 1;
\item $m = 1$ and $n > 1$;
\item $m = 2$;
\item $m = 3$ and $n = 4$; or
\item $m = 4$ and $n$ is odd.
\end{itemize}
\end{theorem}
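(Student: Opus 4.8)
The statement is an ``if and only if'', so the plan splits into necessity (each of the five listed families admits no nullhomotopic tour) and sufficiency (every other pair $(m,n)$ does). For necessity, three of the five families need no new argument. The family in which $m$ and $n$ are both odd with one exceeding $1$ is exactly Proposition \ref{prop:oddeven}(1). The families ``$m=1$, $n>1$'', ``$m=3$, $n=4$'', and ``$m=4$, $n$ odd'' contain no knight's tour whatsoever by Watkins' Theorem \ref{thm:watkinsms}, hence certainly no nullhomotopic one. This leaves the family $m=2$, of which only the case $n$ odd is not already covered by Watkins (who rules out $m=2$ with $n$ even). I would dispatch $m=2$ through the cover $\Sc{2}$: every knight edge of $\Sc{2}$ must switch the column between $0$ and $1$ and shift the $y$-coordinate by $\pm 2$, so each edge joins vertices of equal $y$-parity, and the component of $\Sc{2}$ through $\OO$ is a single bi-infinite path $L$. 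Any lift of a tour is then a self-avoiding walk through $2n$ vertices of $L$, so it is monotone along $L$ and ends $2n$ steps away from $\OO$; by Corollary \ref{cor:cover}(1) such a tour cannot be nullhomotopic. (The same picture reproves Watkins for $n$ even, since a monotone projection cannot cover all residues, and the degenerate pseudograph case $n=1$ is checked directly.)

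For sufficiency I would show that every $(m,n)$ avoiding the five families admits a nullhomotopic tour, drawing first on the cheapest source: whenever the rectangle $\Ra{m}{n}$ itself has a knight's tour, that tour sits inside $\Ma{m}{n}$ and is nullhomotopic. Applying Schwenk's Theorem \ref{thm:schwenk} to the sorted pair (rectangles being symmetric) therefore settles every ``wide'' board, i.e. all dimensions with $\min(m,n)\ge 5$ that are not both odd, together with $\min(m,n)=3$ and $\max(m,n)\ge 10$. What survives is a thin band of boards on which the rectangle has no tour: those in the ``thin'' length directions $n\in\{1,2,3,4\}$, the entire width-$4$ column, and a few sporadic exceptions such as $(6,3)$, $(8,3)$, and $(3,6),(3,8)$.

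The engine for the residual band is the widening Corollary \ref{cor:widen}: an extendable nullhomotopic tour on $\Ma{m-4}{n}$ produces nullhomotopic tours on $\Ma{m+4k}{n}$ for all $k\ge 0$. Since widening increments the width by $4$ and cannot reduce it, widths $3,4,5,6$ are the irreducible base widths; for each admissible $n$ it suffices to produce an extendable nullhomotopic tour at the smallest good width in each residue class modulo $4$ (raising a class when its smallest width is itself bad, e.g. replacing the forbidden $\Ma{3}{4}$ by an explicit $\Ma{7}{4}$), after which Corollary \ref{cor:widen} carries each seed to all larger widths of its class and Schwenk covers those large widths whose rectangle does admit a tour. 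Concretely I would supply explicit extendable tours for the finite lists $m=3$ with $n\in\{2,6,8\}$, $m=5$ with $n\in\{2,4\}$, $m=6$ with $n\in\{1,2,3,4\}$, the sporadic $\Ma{8}{3}$ and $\Ma{7}{4}$, and the trivial $\Ma{1}{1}$, together with the one genuinely infinite family $\Ma{4}{n}$ for every even $n$. Example \ref{ex:widen4x4} already furnishes the seed $\Ma{4}{4}$; a general even-$n$ member I would build from a periodic block whose lift to $\Sc{4}$ is verified to return to $\OO$.

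The main obstacle is precisely this width-$4$ family together with the extendability bookkeeping demanded by Proposition \ref{prop:extending}. Widening is powerless at width $4$, so each even $n$ needs its own construction; and—unlike a bare existence statement—each tour intended to seed the widening must be presented in a form whose lift to $\Sc{m-4}$ carries a genuine extending collection traversed half upward and half downward (for even $n$, two even edges and two odd edges with balanced vertical traversal). Producing Hamiltonian cycles is routine, but arranging them to exhibit this balanced collection, and confirming the balance, is the delicate part; once the width-$4$ seeds and the finitely many narrow-board seeds are in hand in extendable form, the remaining boards follow from Schwenk and repeated application of Corollary \ref{cor:widen}.
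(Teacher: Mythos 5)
Your overall strategy coincides with the paper's: necessity via Proposition \ref{prop:oddeven}, Theorem \ref{thm:watkinsms}, and a direct lifting argument for $m=2$ (your bi-infinite-path analysis of $\Sc{2}$ is a correct, slightly slicker version of the paper's observation that $\Ma{2}{n}$ is a $2n$-cycle whose two tours lift to monotone paths); sufficiency via Schwenk for large boards, extendable seeds fed into Corollary \ref{cor:widen} for the narrow infinite families, and a bespoke vertical induction for $\Ma{4}{n}$ with $n$ even. However, your concrete seed enumeration has two holes. First, for $n=1$ your only seed is $\Ma{6}{1}$, so Corollary \ref{cor:widen} reaches only $m\equiv 2\pmod 4$; the admissible boards $\Ma{8}{1},\Ma{12}{1},\ldots$ are produced by nothing in your list, and Schwenk cannot help since $\Ra{m}{1}$ has no tour. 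Your own residue-class principle demands a seed at $\Ma{8}{1}$; the paper instead avoids widening here and runs a separate induction lengthening $\Ma{m}{1}$ by $2$ at a time from explicit tours on $\Ma{6}{1}$, $\Ma{8}{1}$, and $\Ma{10}{1}$.

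Second, for $n=2$ and $m\equiv 0\pmod 4$ with $m\geq 8$, Schwenk is again unavailable, so your plan needs the width-$4$ seed $\Ma{4}{2}$ to be extendable. You promise the $\Ma{4}{n}$ family ``in extendable form,'' but this is precisely the delicate requirement you flag without resolving: viewing $\Sc{4}$ as the inner graph of $\Sc{8}$, the extending collection for even $n$ must consist of four edges joining columns $0$--$1$ or columns $2$--$3$ of $\Sc{4}$, two of each parity, traversed half upward and half downward, and an $8$-vertex tour on $\Ma{4}{2}$ leaves very little room to arrange this. The paper pointedly lists its $\Ma{4}{2}$ tour among those \emph{not} claimed to be extendable and supplies a separate extendable seed on $\Ma{8}{2}$ instead. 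Both holes are repairable within your framework by adding explicit extendable seeds at width $8$ (for $n=1$ and $n=2$), which is essentially what the paper does; but as written your argument does not cover the classes $\Ma{8+4k}{1}$ and, pending the unverified extendability of $\Ma{4}{2}$, $\Ma{8+4k}{2}$.
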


When both $m$ and $n$ are greater than 4, Theorem \ref{thm:schwenk} implies that if at least one of $m$ or $n$ is even, then a nullhomotopic tour exists on $\Ma{m}{n}$, since there is a knight's tour on $\Ra{m}{n}$.  Further, Proposition \ref{prop:oddeven} implies that if both $m$ and $n$ are odd, then there is no nullhomotopic tour on $\Ma{m}{n}$. This establishes Theorem \ref{thm:MSNH} in the case that $m$ and $n$ are both greater than 4.  Indeed, in this case, $\Ma{m}{n}$ supports a nullhomotopic tour if and only if there is a knight's tour on $\Ra{m}{n}$.  This is not true for smaller boards; the edges that are added to $\Ra{m}{n}$ to create $\Ma{m}{n}$ enable nullhomotopic tours that are impossible otherwise. To prove Theorem \ref{thm:MSNH}, we consider 8 cases by fixing $m$ to be 1, 2, 3, or 4 and likewise fixing $n$.

\subsection*{{\bf $1 \times n$}}
There is a nullhomotopic tour on $\Ma{1}{1}$, as it has only one vertex. Otherwise, there is no knight's tour on $\Ma{1}{n}$ by Theorem \ref{thm:watkinsms}.

\subsection*{{\bf $m \times 1$}}
The multigraph $\Ma{2}{1}$ has two vertices connected by two edges, and neither of the two possible knight's tours are nullhomotopic.  For odd values of $m$ with $m \geq 3$, by Proposition \ref{prop:oddeven}, there is no nullhomotopic tour on $\Ma{m}{1}$.  By Theorem \ref{thm:watkinsms}, there are no knight's tours on $\Ma{4}{1}$.  

This leaves $\Ma{m}{1}$ where $m$ is even and $m \geq 6$.  Observe that Figure \ref{fig:MSNH1xn} gives preimages under $\phi_M$ of nullhomotopic tours on $\Ma{6}{1}$, $\Ma{8}{1}$, and $\Ma{10}{1}$.  We will show how to inductively use the tour on $\Ma{m}{1}$ to build a nullhomotopic tour on $\Ma{m+2}{1}$.  Formally, we map the preimage of the tour under $\phi_M$ to $\Sc{m+2}$ by the inclusion map from $\Sc{m}$ to $ \Sc{m+2}$ sending vertex $(a,b)$ to $(a,b)$ and the edge from $(a,b)$ to $(a+c,b+d)$ to itself where $\{|c|,|d|\} = \{1,2\}$.  In practice, we can visually consider, for example, the path shown in frame (A) of Figure \ref{fig:MSNH1xn} in $\Sc{8}$ and likewise consider the path shown in frame (B) as a path in $\Sc{10}$.  We will delete one edge from each of these paths and replace it with three edges; those four edges - the one deleted edge and the three new edges - form a 4-cycle.  In particular, when $m$ is equivalent to 2 modulo 4, we delete the edge from $\left( \frac{m}{2}-2, -2 \right)$ to $\left( \frac{m}{2}-1, 0 \right)$ and replace it with the three-edge path $\left( \frac{m}{2}-2, -2 \right) - \left( \frac{m}{2}, -3 \right) - \left( \frac{m}{2} + 1,  -1 \right) - \left( \frac{m}{2}-1, 0 \right)$.  Applying this replacement to frame (A) of Figure \ref{fig:MSNH1xn} produces frame (B).  Updating our index by adding 2 to $m$, note now that frame (B) does not include an edge $\left( \frac{m}{2}-2, -2 \right)$ to $\left( \frac{m}{2}-1, 0 \right)$, so we cannot apply the procedure described above.  Instead, when $m$ is equivalent to 0 modulo 4, we delete the edge from $\left( \frac{m}{2}-3, -3 \right)$ to $\left( \frac{m}{2}-2, -1 \right)$ and replace it with the three-edge path $\left( \frac{m}{2}-3, -3 \right) - \left( \frac{m}{2}-1, -2 \right) - \left( \frac{m}{2},  0 \right) - \left( \frac{m}{2}-2, -1 \right)$.  Applying this replacement to frame (B) of Figure \ref{fig:MSNH1xn} produces frame (C).  This induction procedure produces a nullhomotopic tour on $\Ma{m}{1}$ when $m$ is even and $m \geq 6$.

\begin{figure}[t]

\centering
\begin{subfigure}[b]{0.3\textwidth}
\fbox{\includegraphics[width = .9\textwidth]{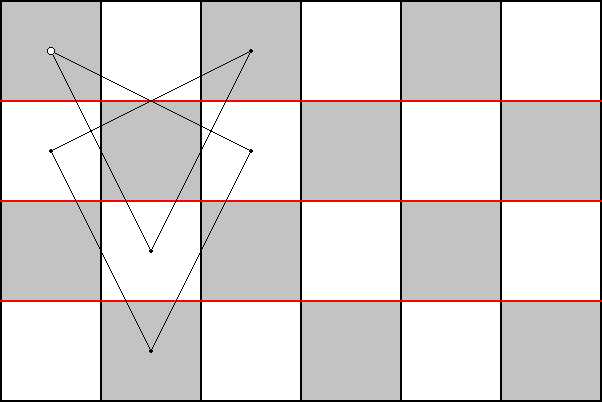}}
\caption{}
\end{subfigure}
\qquad
\begin{subfigure}[b]{0.4\textwidth}
\fbox{\includegraphics[width = .9\textwidth]{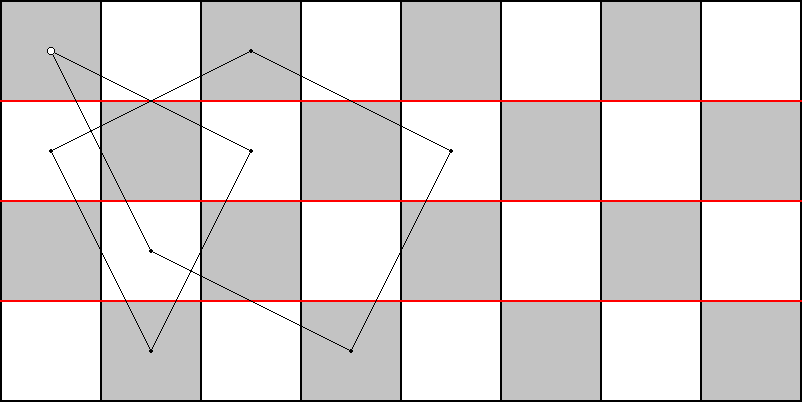}}
\caption{\:}
\end{subfigure}\\[8 pt]
\begin{subfigure}[b]{0.7\textwidth}
\fbox{\includegraphics[width = .9\textwidth]{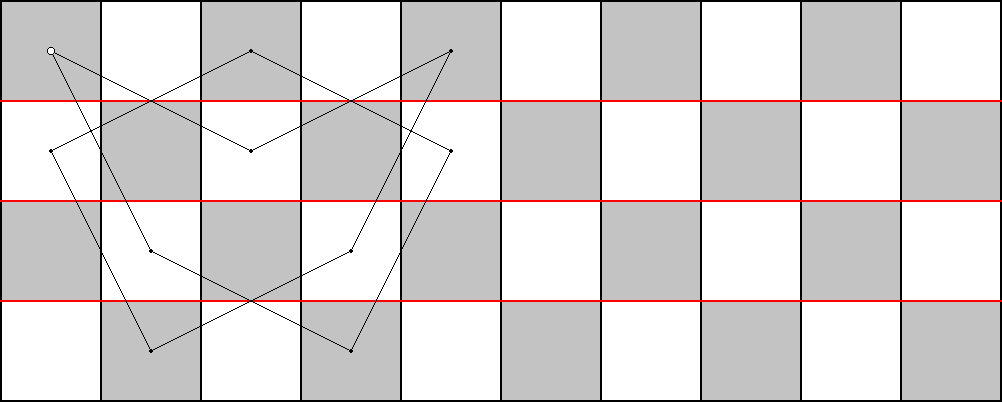}}
\caption{\;\;\;\:}
\end{subfigure}
\caption{Lifts of nullhomotopic tours in $\Ma{6}{1}$, $\Ma{8}{1}$, and $\Ma{10}{1}$.}
\label{fig:MSNH1xn}
\end{figure}

For the remainder of this section, we assume $m$ and $n$ are greater than $1$.

\subsection*{{\bf $2 \times n$}}
When $n$ is even, there is no knight's tour on $\Ma{2}{n}$ by Theorem \ref{thm:watkinsms}.  When $n$ is odd, $\Ma{2}{n}$ is a $2n$-cycle and neither of the two knight's tours are nullhomotopic.

\subsection*{{\bf $m \times 2$}}
Figure \ref{fig:MSNHmx2} shows the preimages under $\phi_M$ of nullhomotopic tours on $\Ma{m}{2}$ for each $m$ with $3 \leq m \leq 6$ and $m = 8$.  Note that for $m = 3, 5, 6,$ and $8$, these tours are extendable.  By Corollary \ref{cor:widen}, for each $m \geq 3$, there is a nullhomotopic tour on $\Ma{m}{2}$.

For the remainder of this section, we assume $m$ and  $n$ are greater than $2$.

\begin{figure}[ht]

\begin{subfigure}[b]{0.21\textwidth}
\fbox{\includegraphics[width = .9\textwidth]{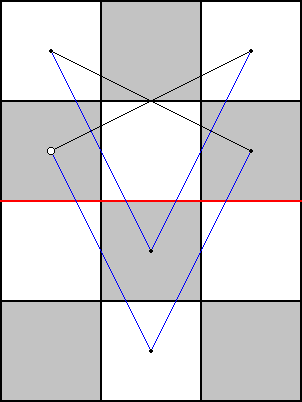}}
\caption{}
\end{subfigure}
\qquad
\begin{subfigure}[b]{0.225\textwidth}
\fbox{\includegraphics[width = .9\textwidth]{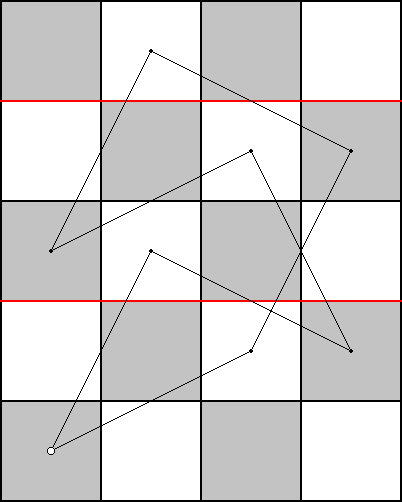}}
\caption{}
\end{subfigure}
\qquad
\begin{subfigure}[b]{0.35\textwidth}
\fbox{\includegraphics[width = .9\textwidth]{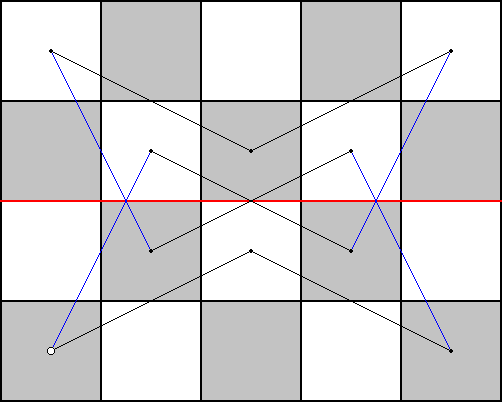}}
\caption{\:}
\end{subfigure}\\[6 pt]

\begin{subfigure}[b]{0.38\textwidth}
\fbox{\includegraphics[width = .9\textwidth]{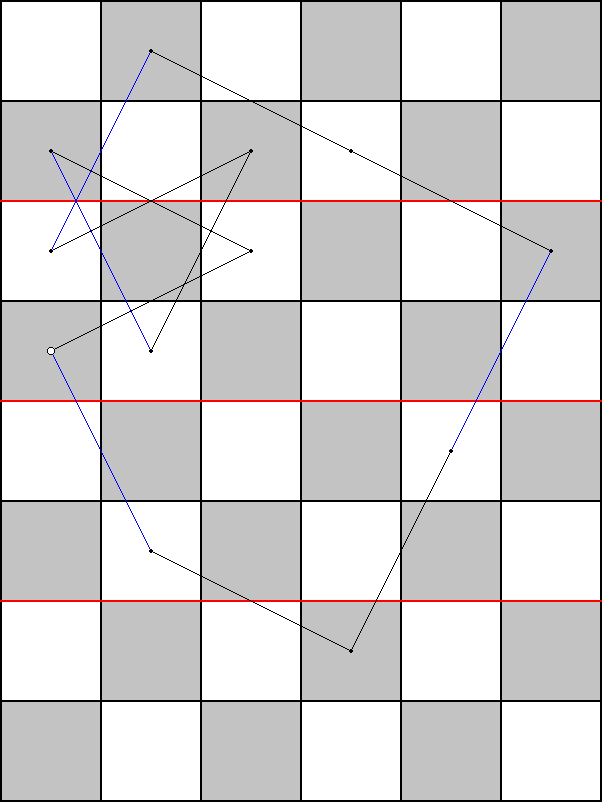}}
\caption{\:}
\end{subfigure}
\qquad
\begin{subfigure}[b]{0.5\textwidth}
\fbox{\includegraphics[width = .9\textwidth]{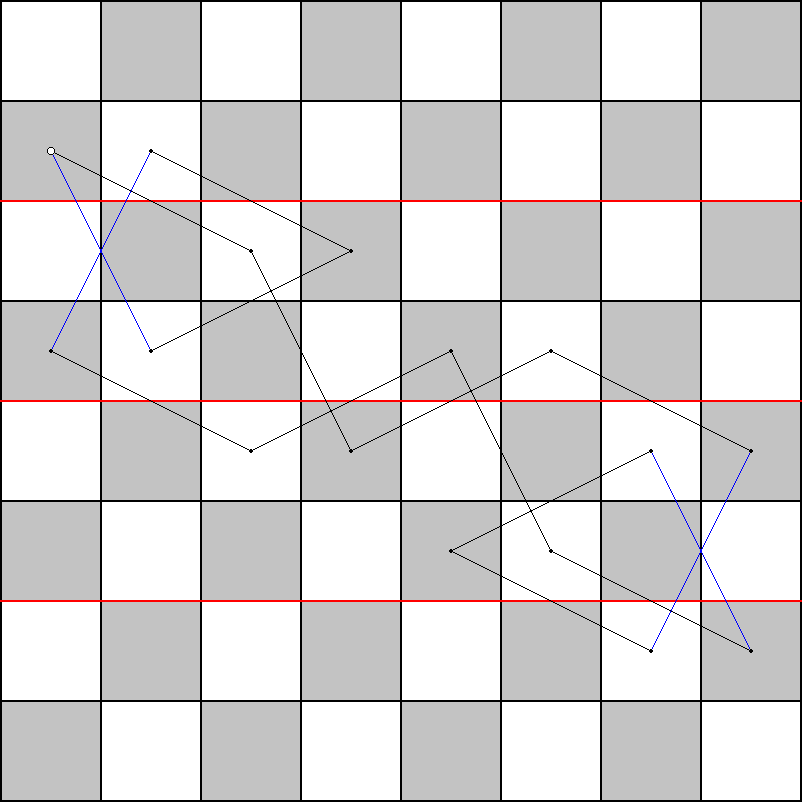}}
\caption{\;\:}
\end{subfigure}
\caption{Lifts of nullhomotopic tours in $\Ma{3}{2}$, $\Ma{4}{2}$, $\Ma{5}{2}$, $\Ma{6}{2}$, and $\Ma{8}{2}$.}
\label{fig:MSNHmx2}
\end{figure}

\subsection*{{\bf $3 \times n$}}
When $n$ is odd, there is no nullhomotopic tour on $\Ma{3}{n}$ by Proposition \ref{prop:oddeven}.  By Theorem \ref{thm:watkinsms}, there is no knight's tour on $\Ma{3}{4}$. By Theorem \ref{thm:schwenk}, the graph $\Ra{3}{n}$ has a tour when $n$ is even and greater than 9, so $\Ma{3}{n}$ has a nullhomotopic tour in those cases.  It remains to consider $\Ma{3}{6}$ and $\Ma{3}{8}$. Figure \ref{fig:MSNH3xn} shows the preimages under $\phi_M$ of nullhomotopic tours on $\Ma{3}{6}$ and $\Ma{3}{8}$.

\begin{figure}[t]

\centering
\begin{subfigure}[b]{0.17\textwidth}
\fbox{\includegraphics[width = .85\textwidth]{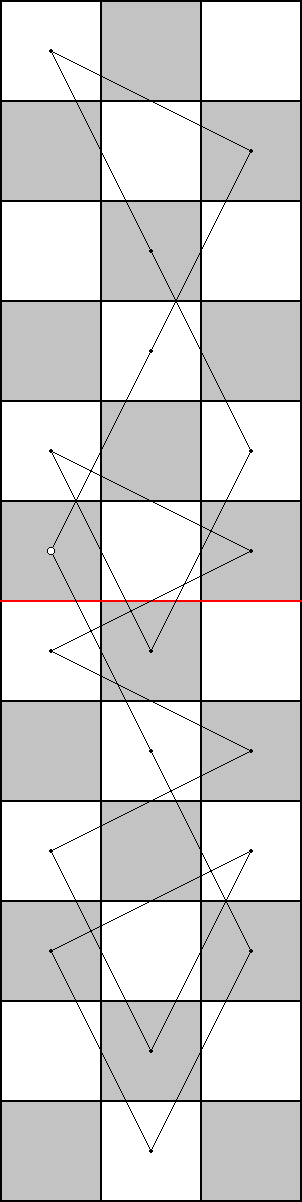}}
\caption{}
\end{subfigure}
\qquad \qquad
\begin{subfigure}[b]{0.17\textwidth}
\fbox{\includegraphics[width = .85\textwidth]{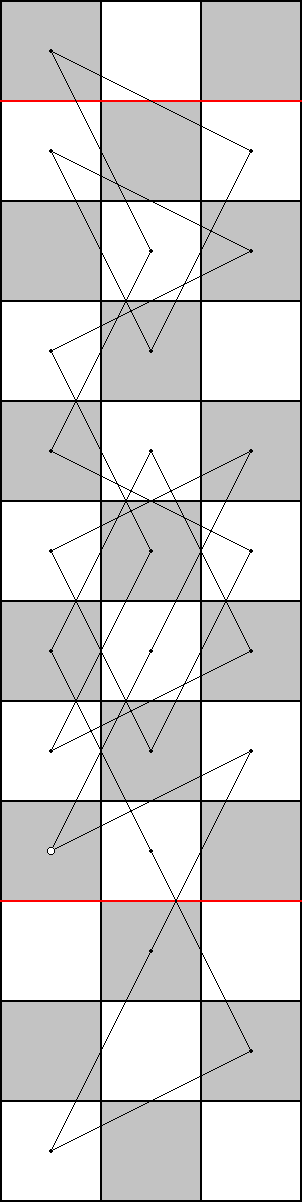}}
\caption{}
\end{subfigure}
\caption{Lifts of nullhomotopic tours in $\Ma{3}{6}$ and $\Ma{3}{8}$.}
\label{fig:MSNH3xn}
\end{figure}

\subsection*{{\bf $m \times 3$}}
By Theorem \ref{thm:watkinsms}, there are no knight's tours on $\Ma{4}{3}$.  By Proposition \ref{prop:oddeven}, there are no nullhomotopic tours on $\Ma{m}{3}$ when $m$ is odd.  Figure \ref{fig:MSNHmx3} shows the preimages under $\phi_M$ of extendable nullhomotopic tours on $\Ma{6}{3}$ and $\Ma{8}{3}$. By Corollary \ref{cor:widen}, there is a nullhomotopic tour on $\Ma{m}{3}$ when $m \geq 6$ and $m$ is even. Additionally, when $m$ is even and $m \geq 10$, there is a tour on $\Ra{m}{3}$ by Theorem \ref{thm:schwenk}, which is a nullhomotopic tour in $\Ma{m}{3}$.

\begin{figure}[t]

\centering
\begin{subfigure}[b]{0.4\textwidth}
\fbox{\includegraphics[width = .9\textwidth]{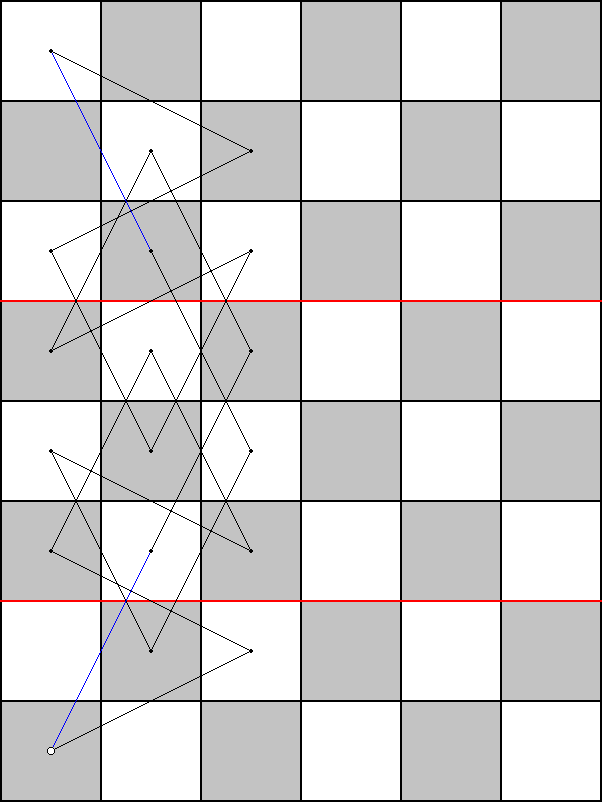}}
\caption{\;}
\end{subfigure}
\qquad \qquad
\begin{subfigure}[b]{0.35\textwidth}
\fbox{\includegraphics[width = .9\textwidth]{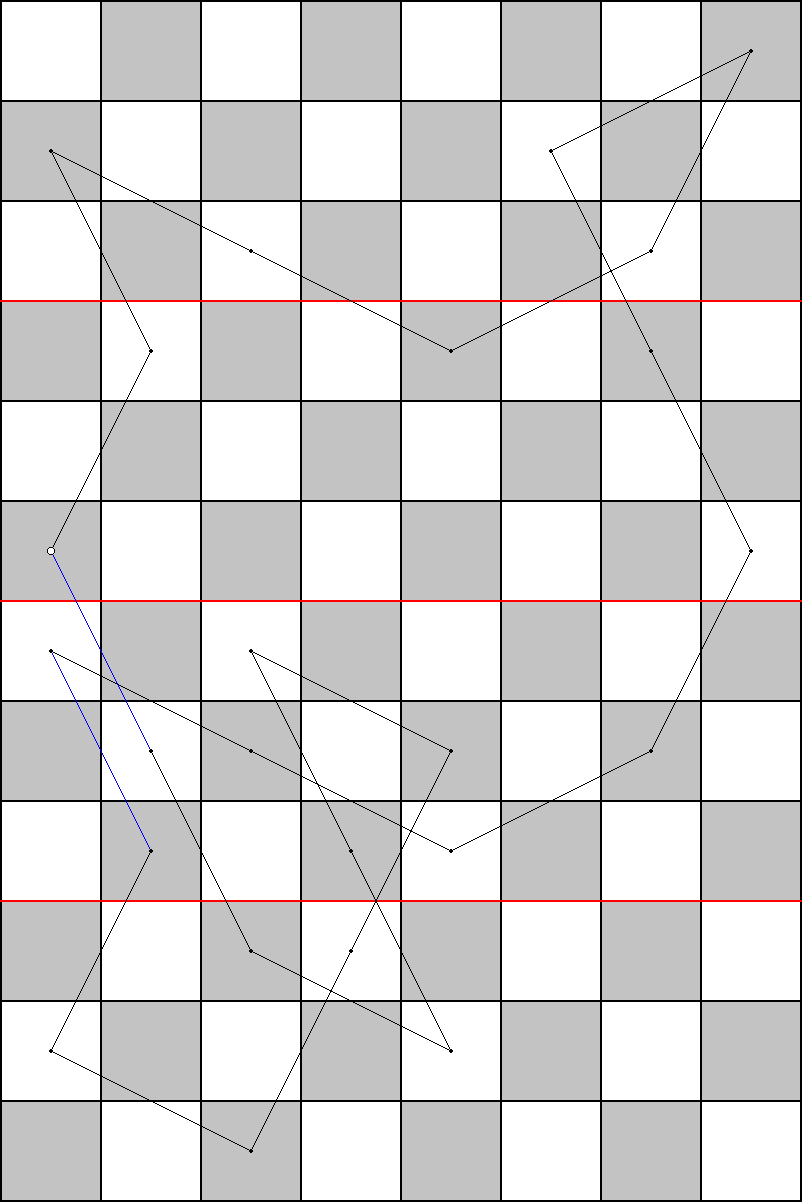}}
\caption{\,}
\end{subfigure}
\caption{Lifts of nullhomotopic tours in $\Ma{6}{3}$ and $\Ma{8}{3}$.}
\label{fig:MSNHmx3}
\end{figure}

For the remainder of this section, we assume $m$ and  $n$ are greater than $3$.

\subsection*{{\bf $4 \times n$}}
By Theorem \ref{thm:watkinsms}, there is no knight's tour on $\Ma{4}{n}$ when $n$ is odd.  Frame (A) of Figure \ref{fig:widenexample4x4} is the preimage under $\phi_M$ of a nullhomotopic tour on $\Ma{4}{4}$. This tour can be inductively extended downward to create tours on $\Ma{4}{n}$ for each even $n$ with $n \geq 6$.  In particular, suppose the lowest edge in the preimage of the tour on $\Ma{4}{n}$ in $\Sc{4}$ is from $(1,-2n)$ to $(3,-2n+1)$.  We create a path on $\Sc{4}$ by replacing this edge with the path of 9 edges given by traversing the following list of vertices: 

\begin{center}
    $(1,-2n),(3,-2n-1),(2,-2n-3),(0,-2n-2),(1,-2n-4),(3,-2n-3),$\\
$(1,-2n-2),(0,-2n),(2,-2n-1),(3,-2n+1)$
\end{center}
The image of this new path in $\Ma{4}{n+2}$ is a nullhomotopic tour. The result of applying this method to the preimage under $\phi_M$ of the nullhomotopic tour on $\Ma{4}{4}$ in frame (A) of Figure \ref{fig:widenexample4x4} is shown in frame (A) of Figure \ref{fig:MSNH4xn}.  Applying this argument inductively to frame (A) of Figure \ref{fig:MSNH4xn} gives the path in frame (B) of Figure \ref{fig:MSNH4xn}. 

\begin{figure}[t]
\centering
\begin{subfigure}[b]{0.22\textwidth}
\fbox{\includegraphics[width = .85\textwidth]{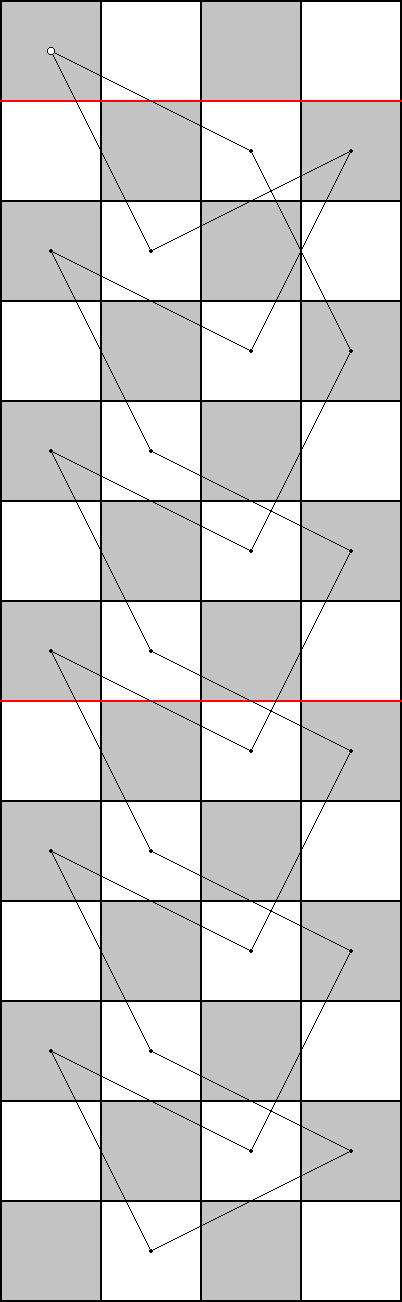}}
\caption{\,}
\end{subfigure}
\qquad \qquad
\begin{subfigure}[b]{0.17\textwidth}
\fbox{\includegraphics[width = .85\textwidth]{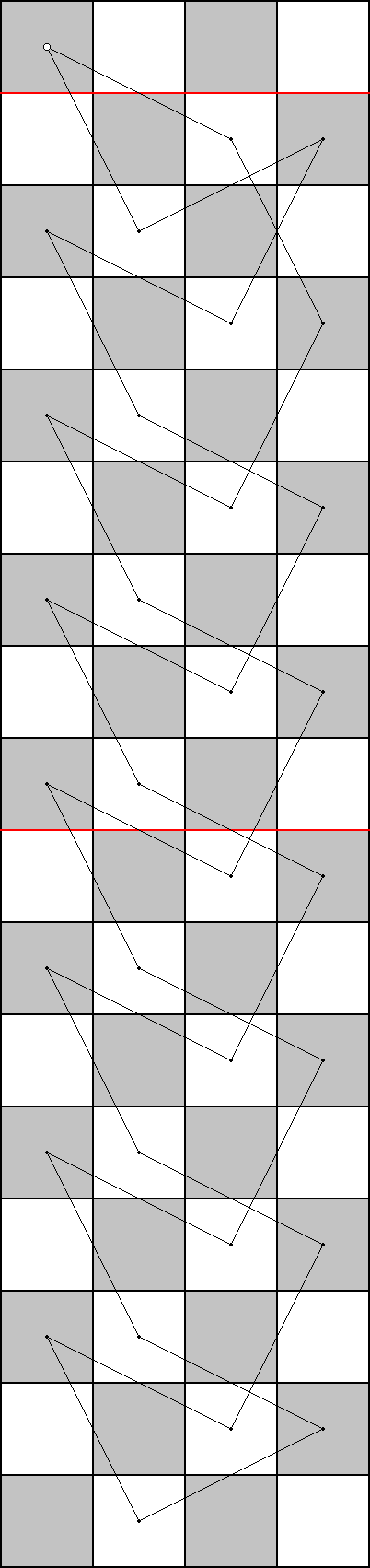}}
\caption{}
\end{subfigure}
\caption{Lifts of nullhomotopic tours on $\Ma{4}{6}$ and $\Ma{4}{8}$, created by applying the induction argument described in the $4 \times n$ case above to lift of the tour on $\Ma{4}{4}$ in frame (A) Figure \ref{fig:widenexample4x4}.}
\label{fig:MSNH4xn}
\end{figure}

\subsection*{{\bf $m \times 4$}}
Frame (A) of Figure \ref{fig:widenexample4x4} along with Figure \ref{fig:MSNHmx4} show preimages under $\phi_M$ of extendable nullhomotopic tours on $\Ma{4}{4}$, $\Ma{5}{4}$, $\Ma{6}{4}$, and $\Ma{7}{4}$.  By Corollary \ref{cor:widen}, there is a nullhomotopic tour on $\Ma{m}{4}$ for each $m \geq 4$.

This completes the proof of Theorem \ref{thm:MSNH}.

\begin{figure}[t]

\centering
\begin{subfigure}[b]{0.25\textwidth}
\fbox{\includegraphics[width = .85\textwidth]{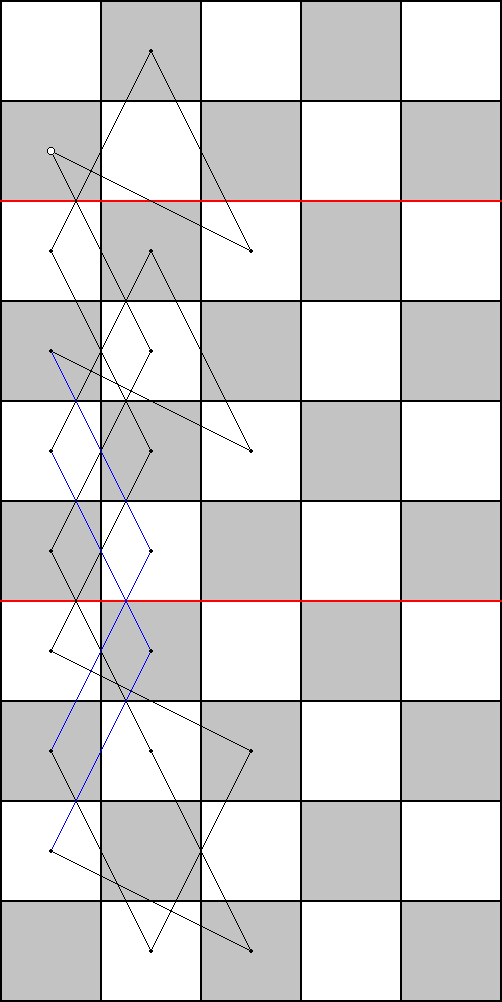}}
\caption{\,}
\end{subfigure}
\qquad
\begin{subfigure}[b]{0.25\textwidth}
\fbox{\includegraphics[width = .85\textwidth]{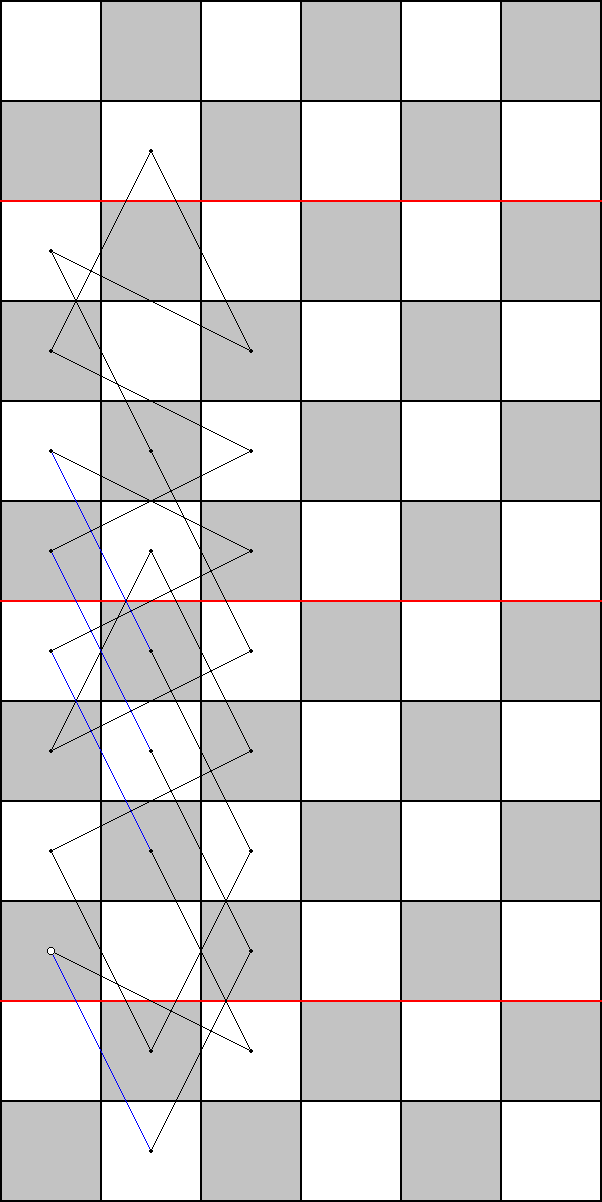}}
\caption{\,}
\end{subfigure}
\qquad
\begin{subfigure}[b]{0.25\textwidth}
\fbox{\includegraphics[width = .85\textwidth]{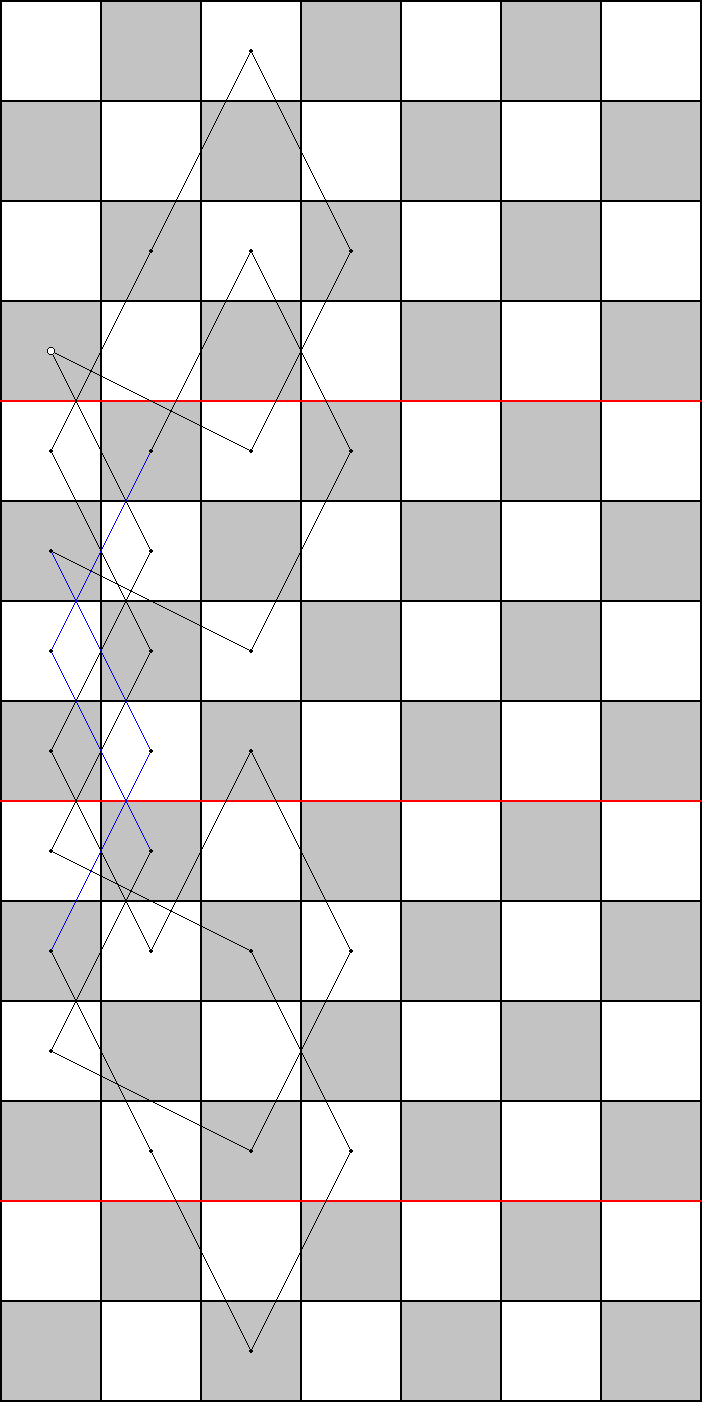}}
\caption{\,}
\end{subfigure}
\caption{Lifts of nullhomotopic tours in $\Ma{5}{4}$, $\Ma{6}{4}$, and $\Ma{7}{4}$.}
\label{fig:MSNHmx4}
\end{figure}

\section{\bf Generating Tours on M\"{o}bius Strips}

We now characterize the dimensions of M\"{o}bius strips that admit generating knight's tours.

\begin{theorem}
\label{thm:MSGen}
The multigraph $\Ma{m}{n}$ supports a generating tour if and only if none of the following are true:
\begin{itemize}
\item $m$ and $n$ are both even;
\item $m = 1, 2,$ or $4$;
\item $m = 3$ and $n = 1, 2,$ or $4$; or
\item $m = 5$ and $n = 1$.
\end{itemize}
\end{theorem}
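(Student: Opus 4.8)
The plan is to mirror the case analysis used for Theorem \ref{thm:MSNH}, splitting the argument into a necessity (``only if'') direction and a sufficiency (``if'') direction, and throughout exploiting Corollary \ref{cor:cover}(2): a tour is generating exactly when its lift to $\Sc{m}$ runs from $\OO$ to $(m-1,\pm n)$, so that constructing a generating tour amounts to producing a Hamiltonian cycle whose lift realizes the glide reflection of the strip. For necessity, the both-even case is immediate from Proposition \ref{prop:oddeven}(2). The cases $m=1$ with $n>1$, $m=2$ with $n$ even, $m=3$ with $n\in\{1,4\}$, $m=4$ with $n$ odd, and $m=5$ with $n=1$ admit no knight's tour at all by Theorem \ref{thm:watkinsms}, and $m=4$ with $n$ even is both-even. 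This leaves only the degenerate boards $\Ma{1}{1}$, $\Ma{2}{n}$ with $n$ odd (a single $2n$-cycle), and $\Ma{3}{2}$, which I would dispatch by direct inspection, lifting the finitely many Hamiltonian cycles to $\Sc{m}$ and checking that none terminates at $(m-1,\pm n)$.

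For sufficiency I would construct a generating tour for every remaining pair $(m,n)$, namely every $(m,n)$ with $m\ge 3$, $m\ne 4$, not both even, and outside the short exceptional list. The both-odd boards with $m,n\ge 5$ and at least one strictly larger than $5$ come essentially for free: by Theorem \ref{thm:ralston} there is an open tour of $\Ra{m}{n}$ between any two even-colored vertices, and closing such a tour with a single wrap-around (M\"obius) edge between well-chosen boundary vertices produces a tour whose lift ends at $(m-1,\pm n)$, as in the remark following Theorem \ref{thm:ralston}. The board $\Ma{5}{5}$, which lies just outside the hypotheses of Theorem \ref{thm:ralston}, I would handle as an explicit base case.

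For all other admissible boards I would invoke the two inductions already developed: widening in $m$ by $4$ via Corollary \ref{cor:widen}, and lengthening in $n$ by appending rows as in the $4\times n$ and $3\times n$ arguments of Section 4. Since widening from $\Ma{m-4}{n}$ reaches every $m$ congruent to a fixed residue modulo $4$ with $m\ge 7$, it suffices to exhibit explicit extendable generating tours as base cases for $m\in\{3,5,6,7,8\}$ (covering $m=3$ and all residues mod $4$ for $m\ge 5$; note $m=8$ cannot be obtained by widening since $\Ma{4}{n}$ has no generating tour) together with the smallest admissible values of $n$, and then propagate to larger $n$ by the row-adding induction and to larger $m$ by Corollary \ref{cor:widen}. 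Each base-case figure must be verified to have its lift terminate at $(m-1,\pm n)$ and to traverse an extending collection half upward and half downward, so that the hypotheses of Corollary \ref{cor:widen} are met.

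The main obstacle is the construction of these base cases in the mixed-parity boards (exactly one of $m,n$ even) and in the width-$3$ boards. Unlike the nullhomotopic setting, I cannot borrow closed rectangular tours from Theorem \ref{thm:schwenk}, since those are nullhomotopic rather than generating; instead each base tour must be built by hand so that it wraps the strip once (its lift ending at $(m-1,\pm n)$) while simultaneously containing a balanced extending collection so the inductions can fire. Reconciling these two requirements, together with the genuinely exceptional small boards $\Ma{5}{5}$ and the width-$3$ family $\Ma{3}{n}$ (which falls outside the range of Theorem \ref{thm:ralston} and must be driven entirely by explicit tours plus a vertical induction), is where the real combinatorial work lies; everything else reduces to bookkeeping with Corollary \ref{cor:cover}(2) and Corollary \ref{cor:widen}.
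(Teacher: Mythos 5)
Your proposal is correct and follows essentially the same route as the paper: necessity via Proposition \ref{prop:oddeven} and Theorem \ref{thm:watkinsms} plus inspection of the degenerate boards, and sufficiency via Theorem \ref{thm:ralston} for the large both-odd boards, explicit extendable base cases for $m\in\{3,5,6,7,8\}$ at the smallest admissible $n$, vertical row-adding inductions, and widening by Corollary \ref{cor:widen}. The only cosmetic difference is that for $\Ma{3}{2}$ the paper replaces your brute-force lift check with a short structural parity argument on the lifts to $\Sc{3}$.
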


We will prove Theorem \ref{thm:MSGen} by considering cases following the structure of the previous section, incrementing $m$ and $n$ as we proceed.

\subsection*{{\bf $1 \times n$}} Since the connected components of $\Sc{1}$ are single vertices, there is no generating tour in $\Ma{1}{1}$. When $n > 2$, there is no generating tour on $\Ma{1}{n}$ by Theorem \ref{thm:watkinsms}.  

\subsection*{{\bf $m \times 1$}} The multigraph $\Ma{2}{1}$ has two vertices connected by two edges, and neither of the two possible knight's tours are generating.  By Theorem \ref{thm:watkinsms}, there are no generating tours on $\Ma{3}{1}$, $\Ma{4}{1}$, or $\Ma{5}{1}$.  Figure \ref{fig:MSGenmx1} shows extendable generating tours on $\Ma{6}{1}$, $\Ma{7}{1}$, $\Ma{8}{1}$, and $\Ma{9}{1}$.  By Corollary \ref{cor:widen}, for each $m \geq 6$, there is a generating tour on $\Ma{m}{1}$.

For the remainder of this section, we assume $m$ and $n$ are greater than $1$.

\begin{figure}[t]

\centering
\begin{subfigure}[b]{0.3\textwidth}
\fbox{\includegraphics[width = .85\textwidth]{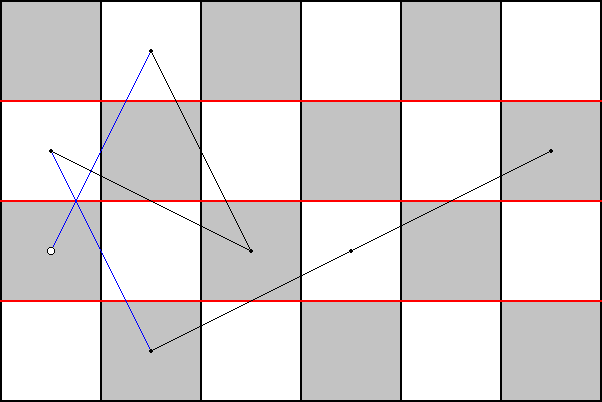}}
\caption{\;\:}
\end{subfigure}
\qquad
\begin{subfigure}[b]{0.34\textwidth}
\fbox{\includegraphics[width = .85\textwidth]{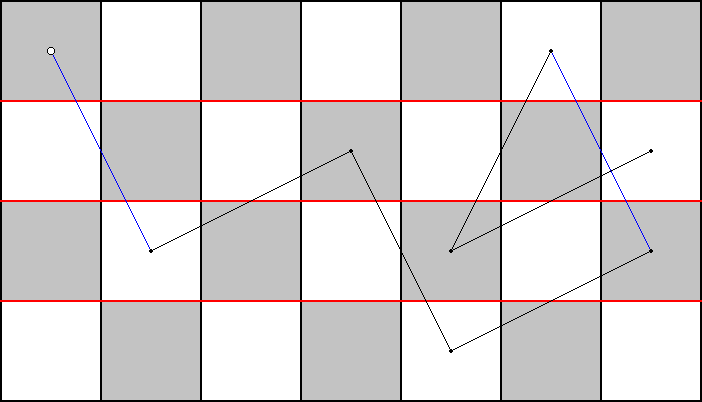}}
\caption{\:\;}
\end{subfigure}\\[8 pt]
\begin{subfigure}[b]{0.32\textwidth}
\fbox{\includegraphics[width = .85\textwidth]{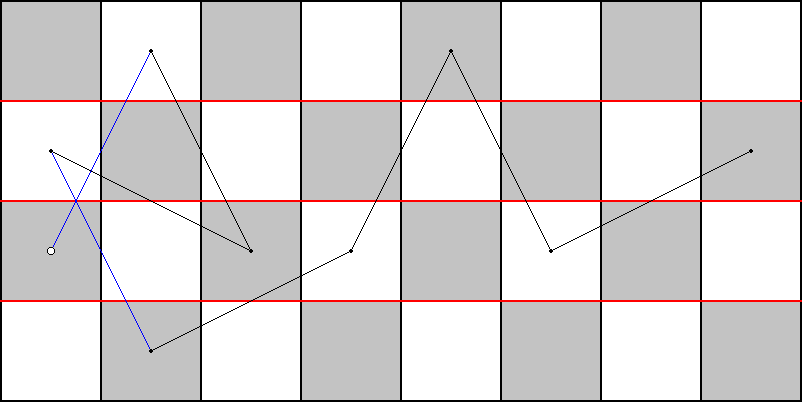}}
\caption{\,}
\end{subfigure}
\qquad
\begin{subfigure}[b]{0.35\textwidth}
\fbox{\includegraphics[width = .85\textwidth]{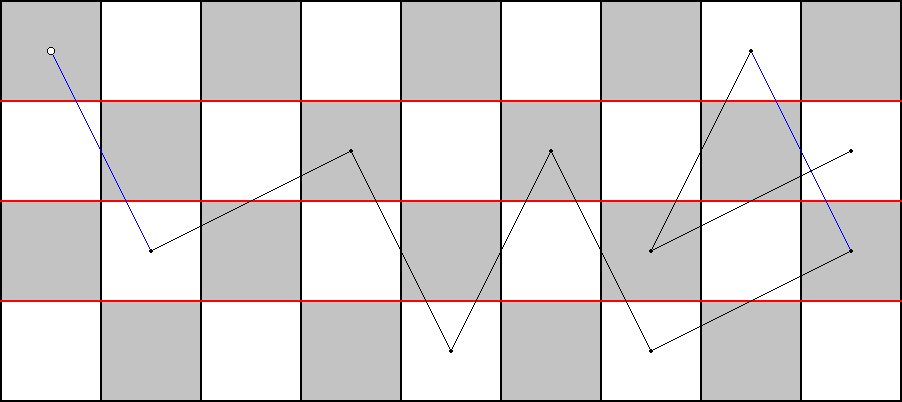}}
\caption{\;\;}
\end{subfigure}
\caption{Lifts of generating tours on $\Ma{6}{1}$, $\Ma{7}{1}$, $\Ma{8}{1}$, and $\Ma{9}{1}$.}
\label{fig:MSGenmx1}
\end{figure}

\subsection*{{\bf $2 \times n$}}
When $n$ is even, there is no knight's tour on $\Ma{2}{n}$ by Theorem \ref{thm:watkinsms}.  When $n$ is odd, $\Ma{2}{n}$ is a $2n$-cycle and neither of the two knight's tours are generating.

\subsection*{{\bf $m \times 2$}} We consider lifts to $\Sc{3}$ of knight's tours in $\Ma{3}{2}$.  We claim that any such tour that begins at $(0,0)$ must end at a point whose first coordinate is 0.  To see this, note that the vertices that are adjacent to $(1,a)$ are $(0,a\pm 2)$ and $(2,a\pm 2)$. Further, $\phi_M(0,a-2) = \phi_M(0,a+2)$ and $\phi_M(2,a-2) = \phi_M(2,a+2)$, so the two-edge subpath of a lift of a knight's tour in $\Ma{3}{2}$ containing $(1,a)$ in its interior must adjoin vertices with first coordinates 0 and 2. Therefore, the lift can be partitioned into four edge paths of length 1 or 2, each of which adjoins vertices with first coordinates 0 and 2.  Since we begin our edge path at a vertex with first coordinate 0, we must also end at a vertex with first coordinate 0.  Hence, our lift does not map to a generating tour, so there are no generating tours on $\Ma{3}{2}$. 

When $m$ is even, $\Ma{m}{2}$ has no generating tour by Proposition \ref{prop:oddeven}.  Figure \ref{fig:MSGenmx2} shows extendable generating tours for $\Ma{5}{2}$ and $\Ma{7}{2}$.  Hence, there is a generating tour on $\Ma{m}{2}$ when $m$ is odd and $m \geq 5$.

\begin{figure}[t]
\centering
\begin{subfigure}[b]{0.25\textwidth}
\fbox{\includegraphics[width = .85\textwidth]{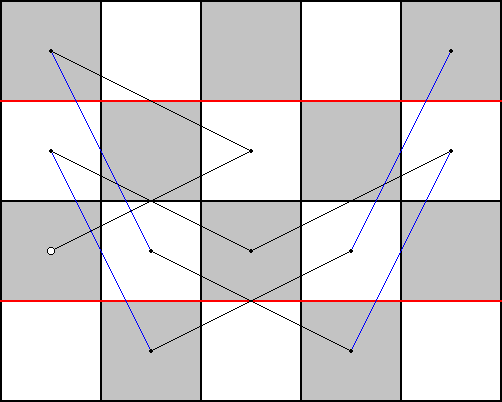}}
\caption{\,}
\end{subfigure}
\qquad
\begin{subfigure}[b]{0.34\textwidth}
\fbox{\includegraphics[width = .85\textwidth]{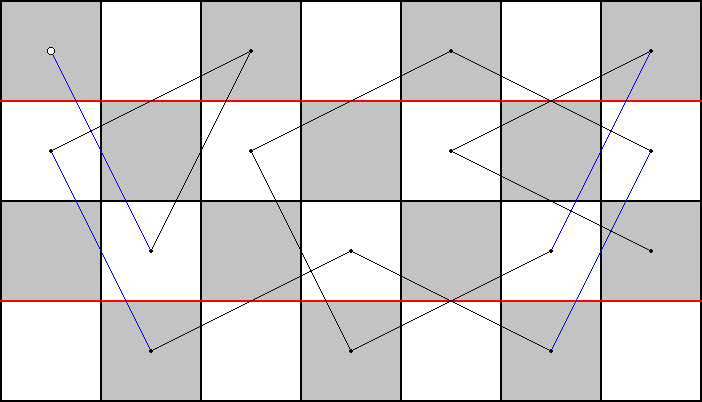}}
\caption{\;}
\end{subfigure}
\caption{Lifts of generating tours on $\Ma{5}{2}$ and $\Ma{7}{2}$.}
\label{fig:MSGenmx2}
\end{figure}

For the remainder of this section, we assume $m$ and $n$ are greater than $2$.

\begin{figure}[t]

\begin{picture}(300,270)
\put(38,140){\begin{subfigure}[b]{0.195\textwidth}
\fbox{\includegraphics[width = .85\textwidth]{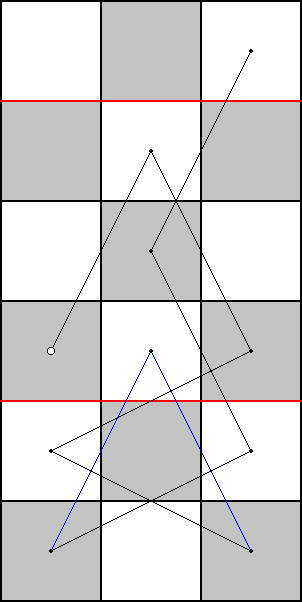}}
\caption{\,}
\end{subfigure}}
\put(128,140){\begin{subfigure}[b]{0.165\textwidth}
\fbox{\includegraphics[width = .85\textwidth]{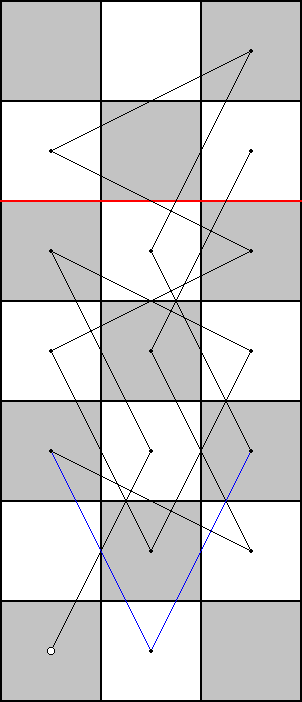}}
\caption{}
\end{subfigure}}
\put(210,140){\begin{subfigure}[b]{0.145\textwidth}
\fbox{\includegraphics[width = .85\textwidth]{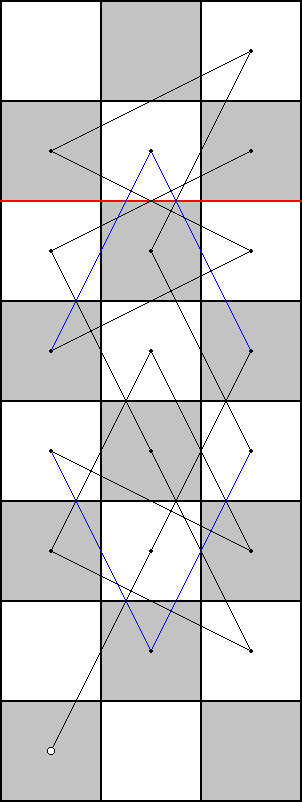}}
\caption{}
\end{subfigure}}

\put(10,0){\begin{subfigure}[b]{0.13\textwidth}
\fbox{\includegraphics[width = .85\textwidth]{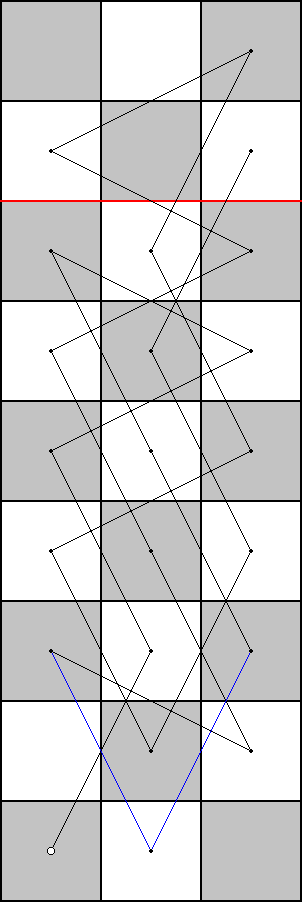}}
\caption{}
\end{subfigure}}
\put(77,0){\begin{subfigure}[b]{0.12\textwidth}
\fbox{\includegraphics[width = .83\textwidth]{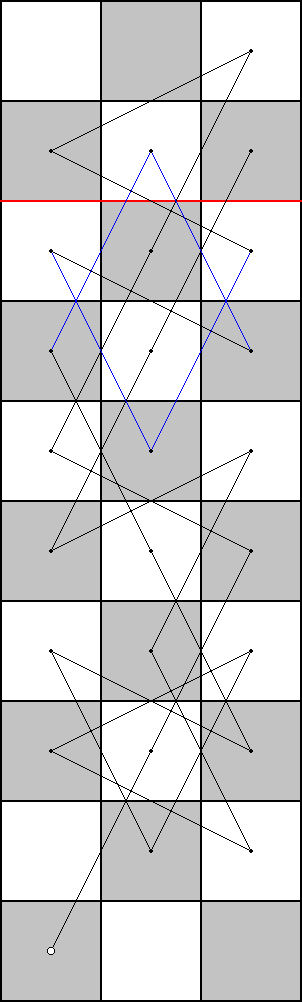}}
\caption{}
\end{subfigure}}
\put(144,0){\begin{subfigure}[b]{0.195\textwidth}
\fbox{\includegraphics[width = .85\textwidth]{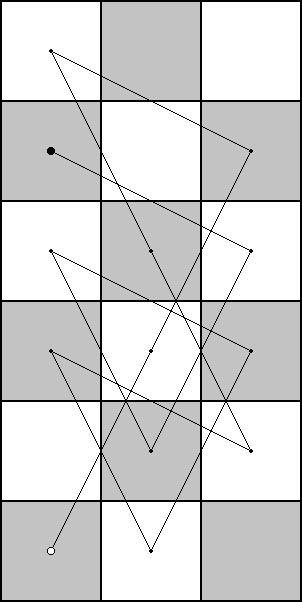}}
\caption{\,}
\end{subfigure}}

\put(230,0){\begin{subfigure}[b]{0.195\textwidth}
\fbox{\includegraphics[width = .85\textwidth]{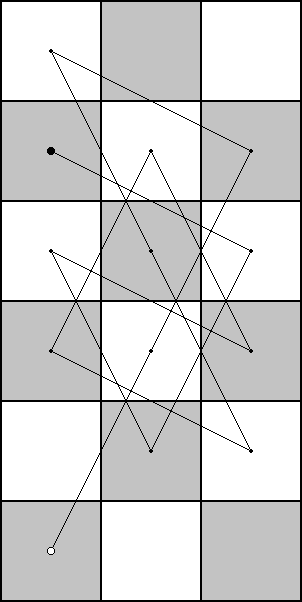}}
\caption{}
\end{subfigure}}

\end{picture}

\caption{Frames (A) to (E): Lifts of generating tours on $\Ma{3}{3}$, $\Ma{3}{5}$, $\Ma{3}{6}$, $\Ma{3}{7}$ and $\Ma{3}{8}$. Frames (F) and (G): Paths in $\Sc{3}$ used in the induction argument for the $3 \times n$ case above.  }
\label{fig:MSGen3xn}
\end{figure}

\subsection*{{\bf $3 \times n$}} By Theorem \ref{thm:watkinsms}, there is no generating tour on $\Ma{3}{4}$.  Frames (A) to (E) of Figure \ref{fig:MSGen3xn} show extendable generating tours on $\Ma{3}{n}$ for $n = 3, 5, 6, 7,$ and $8$.  For $n > 8$, we use induction to produce a generating tour on $\Ma{3}{n}$.  Our induction argument splits into two cases based on the parity of $n$ and uses the graphs in frames (F) and (G) of Figure \ref{fig:MSGen3xn}.  For $n > 8$, we inductively construct a generating tour on $\Ma{3}{n+4}$ by concatenating the generating tour on $\Ma{3}{n}$ with the one of the paths shown in frames (F) and (G) of Figure \ref{fig:MSGen3xn}.  When $n$ is odd, we concatenate by placing the large white vertex in frame (F) of Figure \ref{fig:MSGen3xn} at $(0,0)$, placing the large black vertex at $(0,4)$, and gluing this vertex to the path in $\Sc{3}$ that maps to a generating tour in $\Ma{3}{n-4}$ translated upward by 4. Using the paths in frames (B) and (D) of Figure \ref{fig:MSGen3xn} as base cases, this procedure inductively creates an extendable generating tour on $\Ma{3}{n}$ when $n$ is odd and $n \geq 5$. When $n$ is even, we concatenate by placing the large white vertex in frame (G) of Figure \ref{fig:MSGen3xn} at $(0,0)$, placing the large black vertex at $(0,4)$, and gluing this vertex to the path in $\Sc{3}$ that maps to a generating tour in $\Ma{3}{n-4}$ translated upward by 4. Using the paths in frames (C) and (E) of Figure \ref{fig:MSGen3xn} as base cases, this procedure inductively creates an extendable generating tour on $\Ma{3}{n}$ when $n$ is even and $n \geq 6$. Hence, when $n \geq 3$ and $n \ne 4$, there is an extendable generating tour on $\Ma{3}{n}$.

\subsection*{{\bf $m \times 3$}} By Theorem \ref{thm:watkinsms}, there is no generating tour on $\Ma{4}{3}$. Figure \ref{fig:MSGenmx3} shows extendable generating tours on $\Ma{m}{3}$ for $m = 5, 6,$ and $8$. Hence, when $m \geq 3$ and $m \ne 4$, there is a generating tour on $\Ma{m}{3}$.

For the remainder of this section, we assume $m$ and $n$ are greater than $3$.

\begin{figure}[t]
\centering
\begin{subfigure}[b]{0.2\textwidth}
\fbox{\includegraphics[width = .85\textwidth]{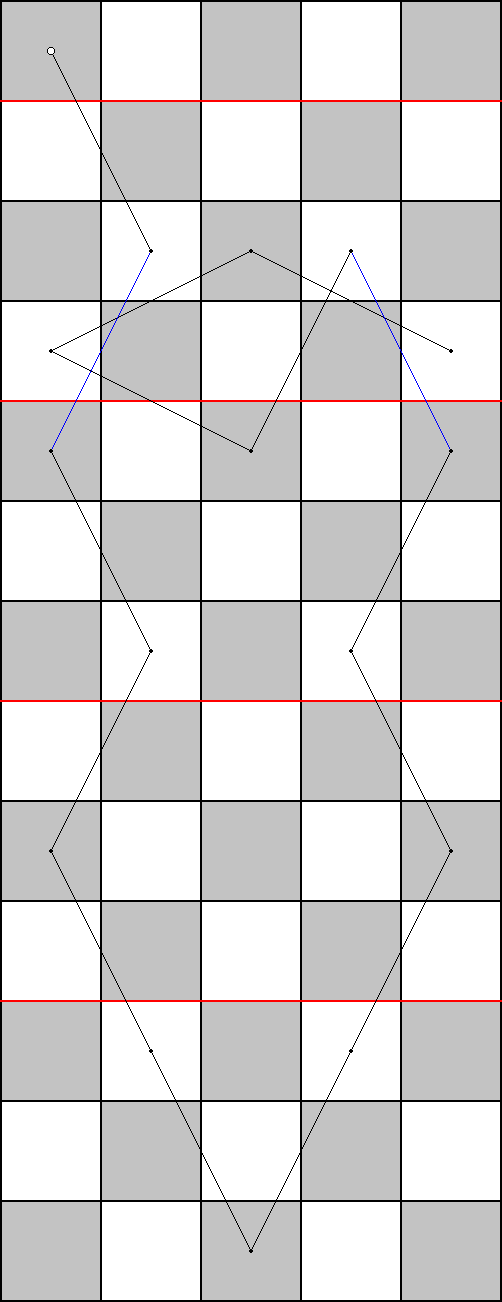}}
\caption{}
\end{subfigure}
\qquad 
\begin{subfigure}[b]{0.22\textwidth}
\fbox{\includegraphics[width = .85\textwidth]{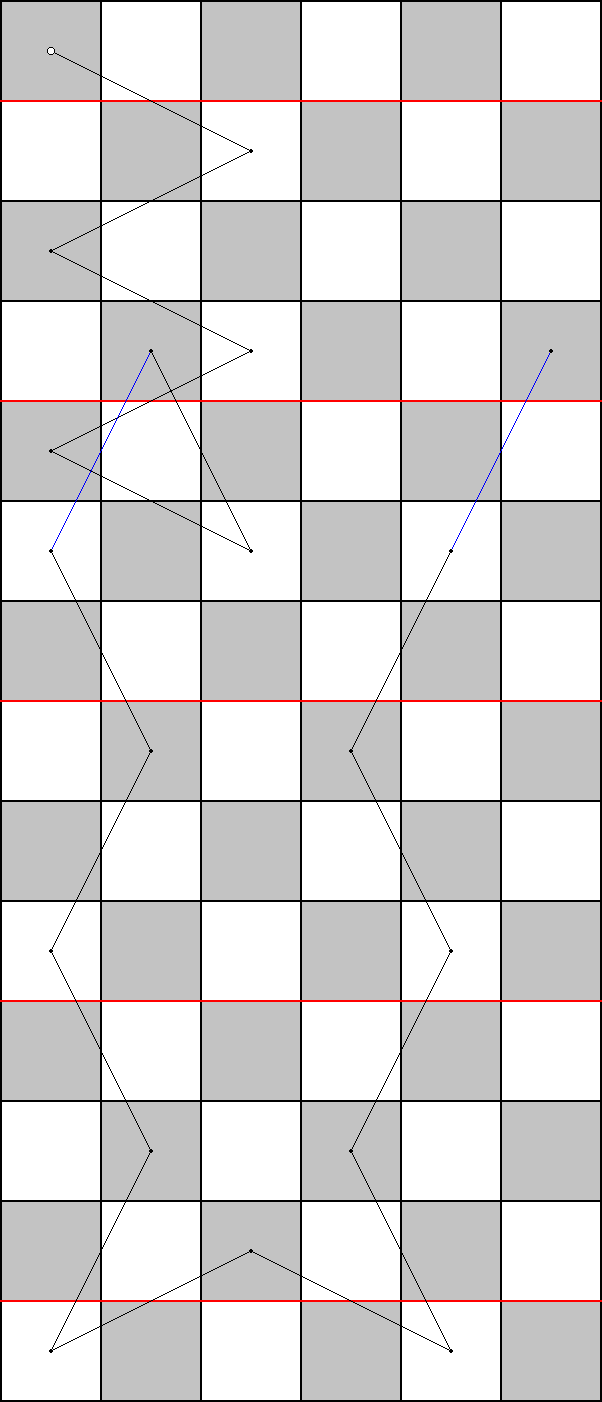}}
\caption{\,}
\end{subfigure}
\qquad
\begin{subfigure}[b]{0.34\textwidth}
\fbox{\includegraphics[width = .85\textwidth]{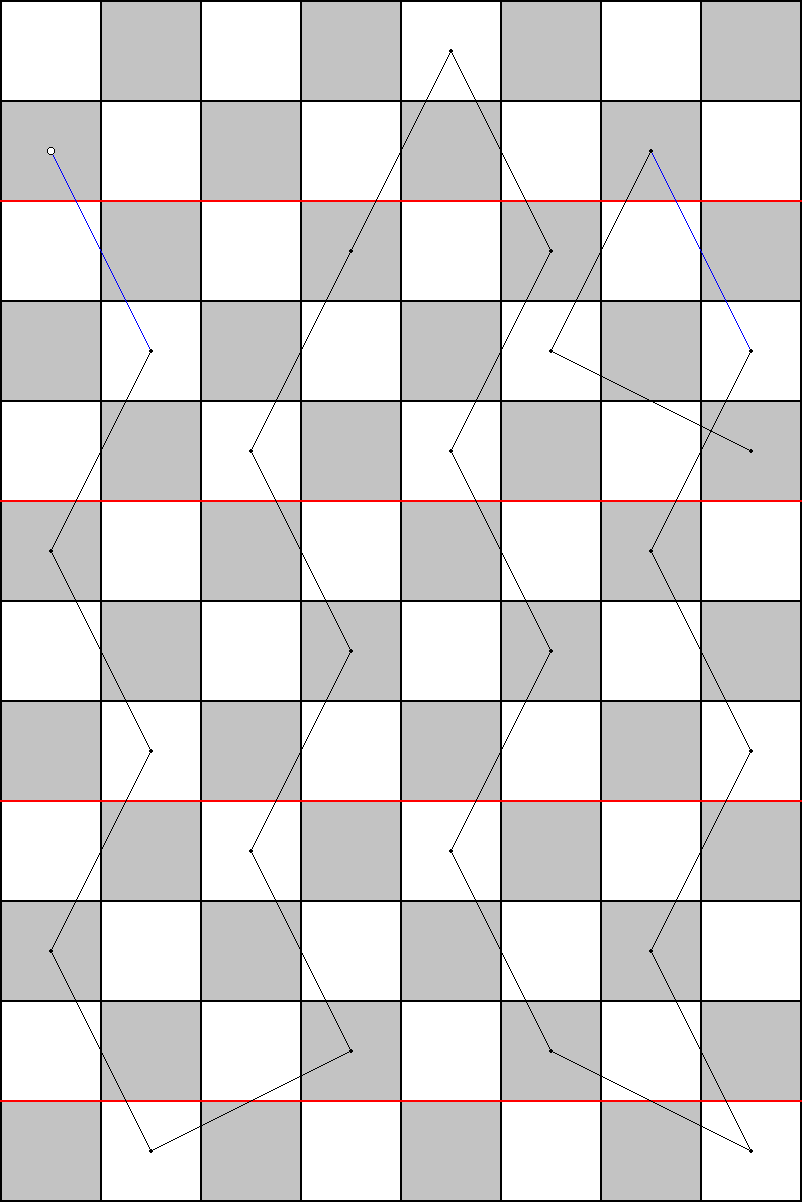}}
\caption{\;\:}
\end{subfigure}

\caption{Lifts of generating tours on $\Ma{5}{3}$, $\Ma{6}{3}$, and $\Ma{8}{3}$.}
\label{fig:MSGenmx3}
\end{figure}

\subsection*{{\bf $4 \times n$}} By Theorem \ref{thm:watkinsms}, there is no generating tour on $\Ma{4}{n}$ when $n$ is odd.  By Proposition \ref{prop:oddeven}, there is no generating tour on $\Ma{4}{n}$ when $n$ is even.

\subsection*{{\bf $m \times 4$}} By Proposition \ref{prop:oddeven}, there is no generating tour on $\Ma{m}{4}$ when $m$ is even.  Figure \ref{fig:MSGenmx4} shows extendable generating tours on $\Ma{5}{4}$ and $\Ma{7}{4}$.  Hence, $\Ma{m}{4}$ has a generating tour when $m$ is odd and $m \geq 5$.

\begin{figure}[t]

\centering
\begin{subfigure}[b]{0.25\textwidth}
\fbox{\includegraphics[width = .85\textwidth]{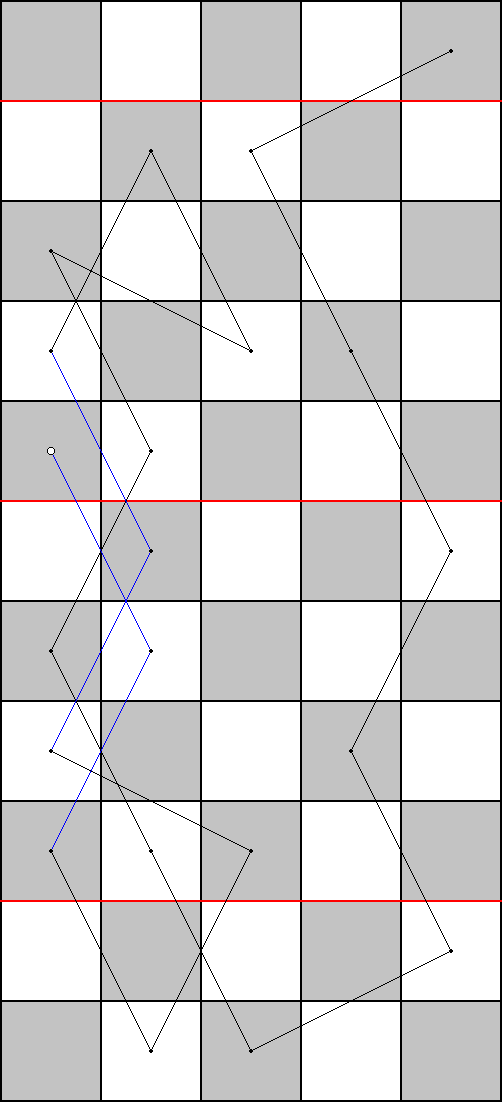}}
\caption{\,}
\end{subfigure}
\qquad \qquad
\begin{subfigure}[b]{0.39\textwidth}
\fbox{\includegraphics[width = .845\textwidth]{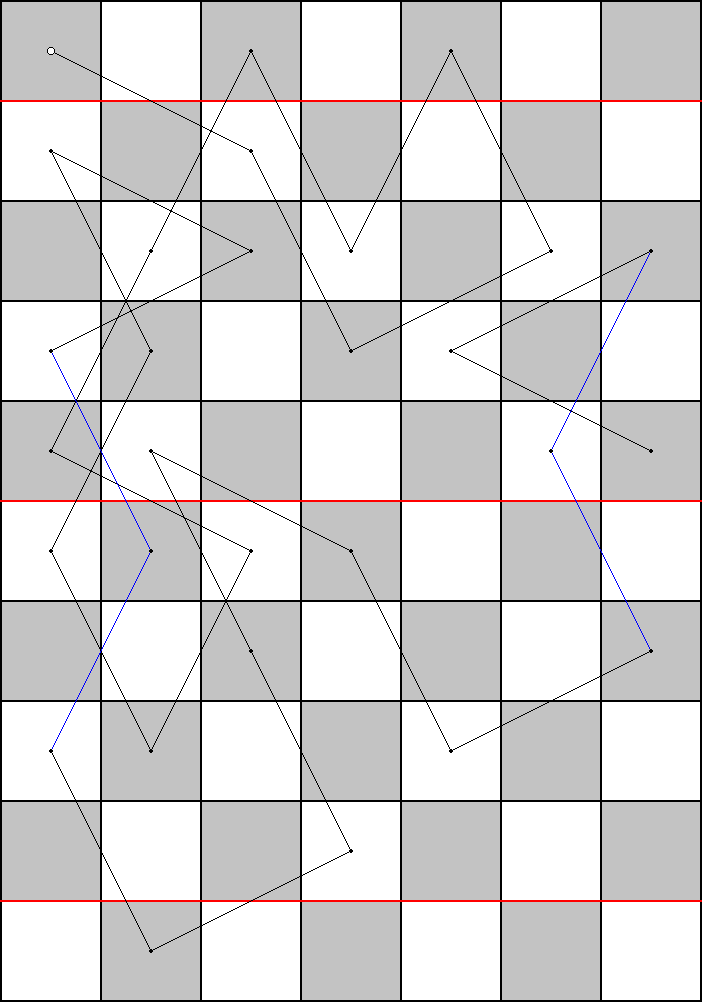}}
\caption{\;\;\;}
\end{subfigure}
\caption{Lifts of generating tours on $\Ma{5}{4}$ and $\Ma{7}{4}$.}
\label{fig:MSGenmx4}
\end{figure}

\begin{figure}[h]
\centering
\begin{subfigure}[b]{0.12\textwidth}
\fbox{\includegraphics[width = .85\textwidth]{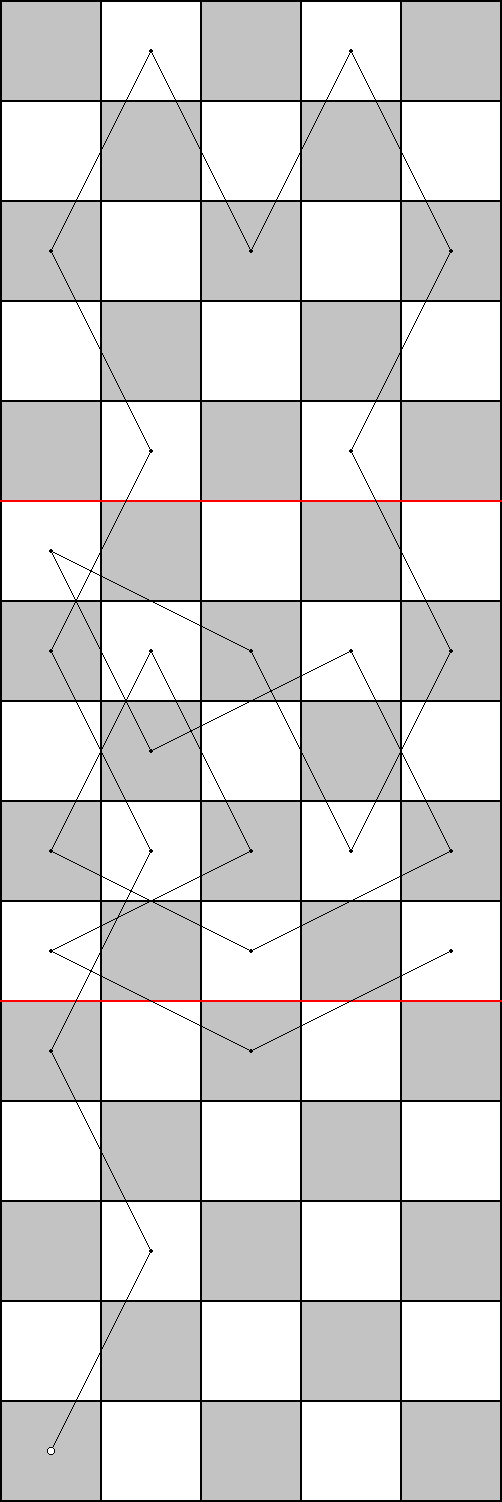}}
\caption{}
\end{subfigure}
\hfill
\begin{subfigure}[b]{0.225\textwidth}
\fbox{\includegraphics[width = .85\textwidth]{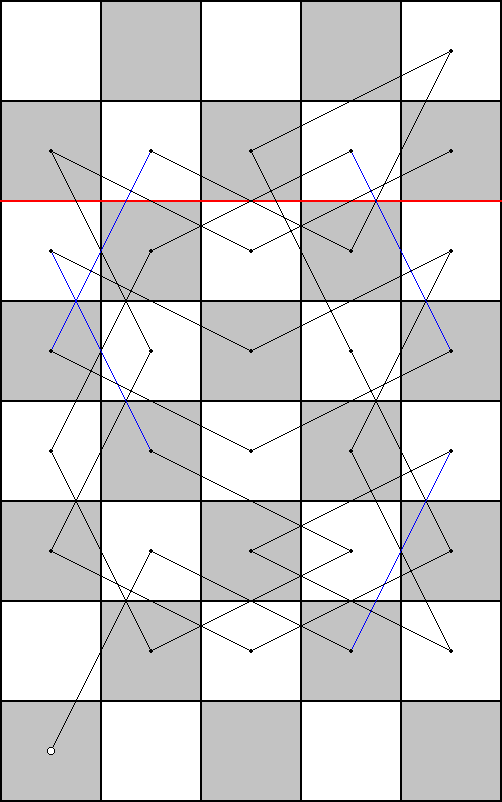}}
\caption{\,}
\end{subfigure}
\hfill
\begin{subfigure}[b]{0.18\textwidth}
\fbox{\includegraphics[width = .85\textwidth]{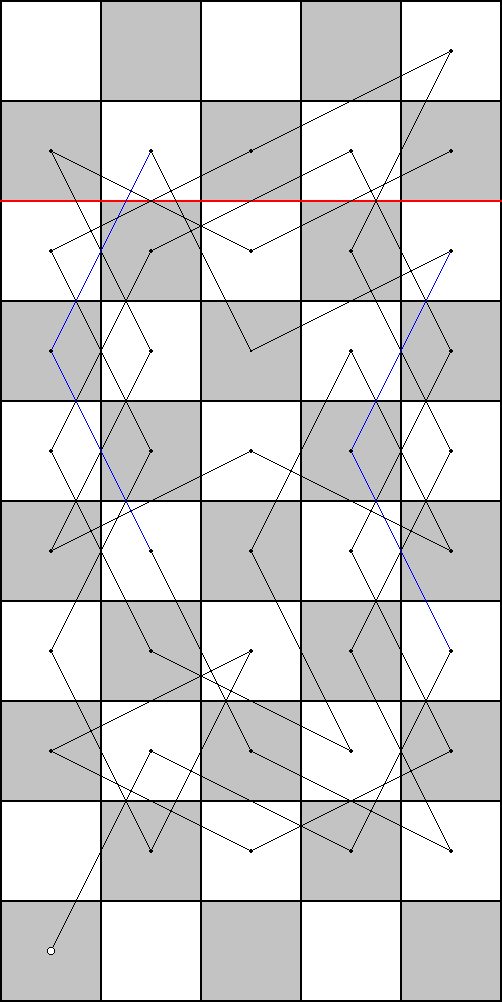}}
\caption{}
\end{subfigure}
\hfill
\begin{subfigure}[b]{0.305\textwidth}
\fbox{\includegraphics[width = .84\textwidth]{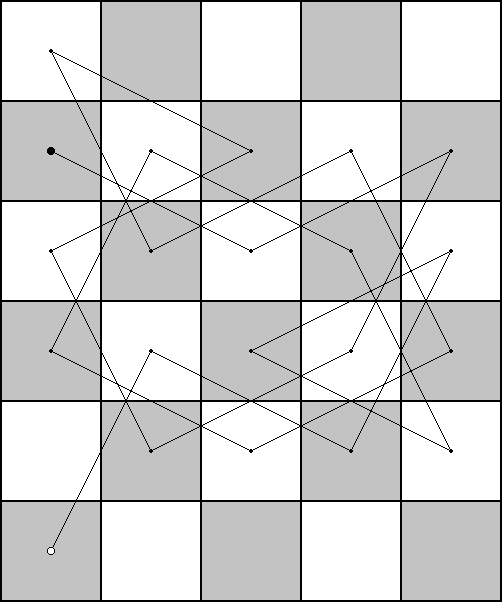}}
\caption{\;\:}
\end{subfigure}
\caption{Frames (A), (B), and (C) are lifts of generating tours in $\Ma{5}{5}$, $\Ma{5}{6}$, and $\Ma{5}{8}$.  Frame (D) is a path in $\Sc{5}$ used in the induction argument for the $5 \times n$ case above.}
\label{fig:MSGen5xn}
\end{figure}

For the remainder of this section, we assume $m$ and $n$ are greater than $4$.

\subsection*{{\bf $5 \times n$}} Frame (A) of Figure \ref{fig:MSGen5xn} shows a generating tour on $\Ma{5}{5}$ and frames (B) and (C) of Figure \ref{fig:MSGen5xn} show extendable generating tours on $\Ma{5}{6}$ and $\Ma{5}{8}$.  When $n$ is even and $n > 8$, we construct tours by concatenating the path in $\Sc{5}$ whose image is a generating tour in $\Ma{5}{n-4}$ translated upward by 4 with the image of the path shown in frame (D) of Figure \ref{fig:MSGen5xn}.  We take the white vertex in frame (D) as $(0,0)$ and notice that the terminal point of the path in frame (D) at $(0,4)$ aligns with the other path. Taking frames (B) and (C) of Figure \ref{fig:MSGen5xn} as base cases, this procedure inductively creates an extendable generating tour on $\Ma{5}{n}$ when $n$ is even and $n \geq 6$.

For the case when $n$ is odd and $n \geq 7$, note that by Theorem \ref{thm:ralston}, there is an open knight's tour on $\Ma{5}{n}$ that begins at $(0,0)$ and ends at $(2,n-1)$.  Making one additional move to $(4,n) = (0,0)$ creates a generating knight's tour on $\Ma{5}{n}$.  Hence, there is a generating tour on $\Ma{5}{n}$ for all $n \geq 5$.

At this point, we will no longer increment $n$ and only assume that $n \geq 5$.  Due to the similarity in the argument, we consider the $6 \times n$ and $8 \times n$ simultaneously.

\begin{figure}[t]

\centering
\begin{subfigure}[b]{0.34\textwidth}
\fbox{\includegraphics[width = .85\textwidth]{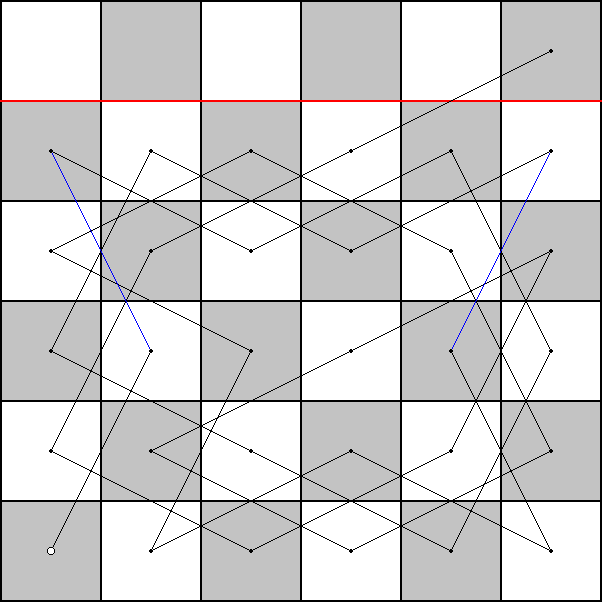}}
\caption{\;\:}
\end{subfigure}
\qquad
\begin{subfigure}[b]{0.255 \textwidth}
\fbox{\includegraphics[width = .85\textwidth]{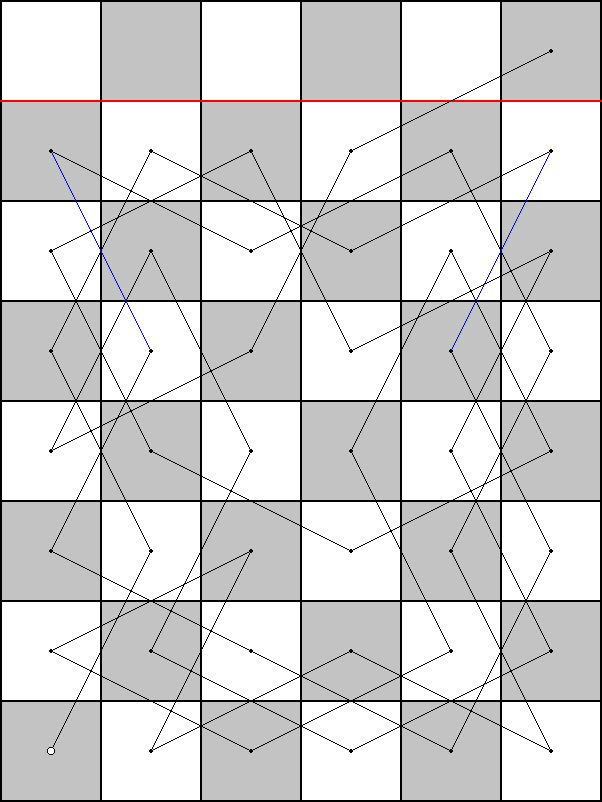}}
\caption{\,}
\end{subfigure}\\[8 pt]
\begin{subfigure}[b]{0.37\textwidth}
\fbox{\includegraphics[width = .85\textwidth]{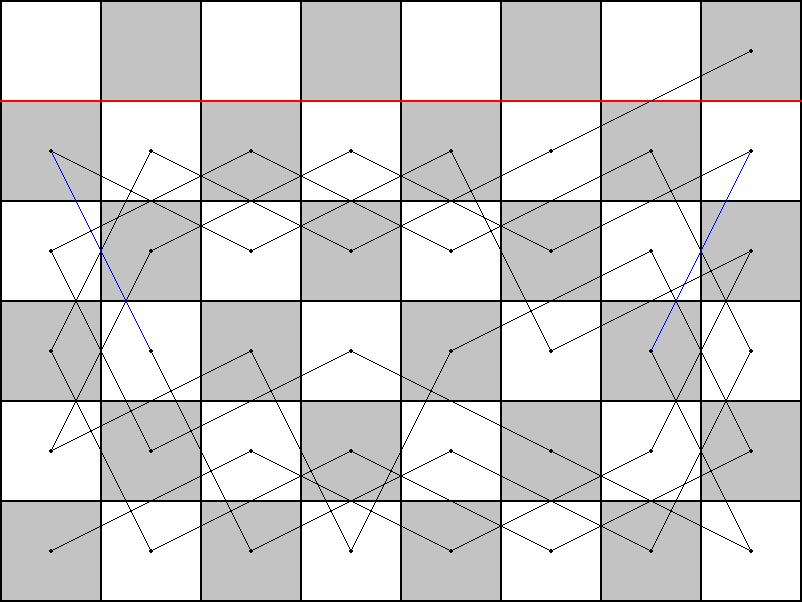}}
\caption{\;\;}
\end{subfigure}
\qquad
\begin{subfigure}[b]{0.28\textwidth}
\fbox{\includegraphics[width = .85\textwidth]{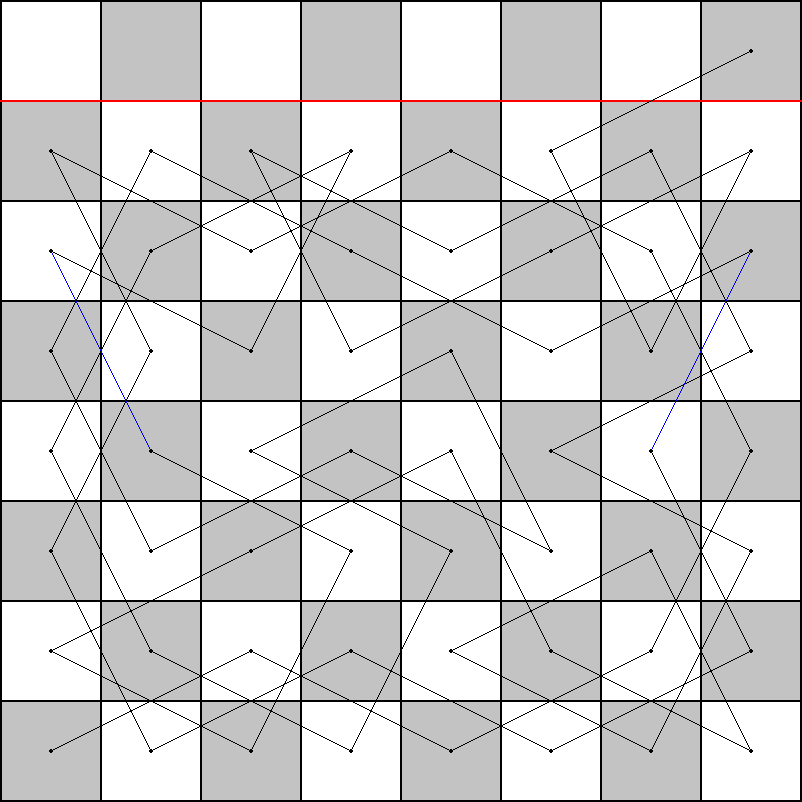}}
\caption{\;}
\end{subfigure}
\caption{Lifts of generating tours on $\Ma{6}{5}$, $\Ma{6}{7}$ $\Ma{8}{5}$, and $\Ma{8}{7}$.}
\label{fig:MSGen68xn}
\end{figure}

\subsection*{$6 \times n$, $8 \times n$} By Proposition \ref{prop:oddeven}, there is no generating tour on $\Ma{m}{n}$ when $m = 6$ or $8$ and $n$ is even.  Figure \ref{fig:MSGen68xn} shows extendable generating tours on $\Ma{6}{5}$, $\Ma{6}{7}$, $\Ma{8}{5}$, and $\Ma{8}{7}$. For $n$ odd with $n > 7$, we construct generating tours on $\Ma{m}{n}$ via induction. We translate the path on $\Sc{m}$ that gives a generating tour on $\Ma{m}{n-4}$ upward by $4$ and concatenate this translated path with frame (A) or frame (B) of Figure \ref{fig:MSGen68xnInductiion} when $m=6$ or $m=8$ respectively. In particular, we take the white vertices in frames (A) and (B) of Figure \ref{fig:MSGen68xnInductiion} as $(0,0)$.  This produces an extendable generating tour on $\Ma{m}{n}$ when $m = 6$ or $8$ and $n$ is odd with $n \geq 5$.

\begin{figure}[t]

\centering
\begin{subfigure}[b]{0.25\textwidth}
\fbox{\includegraphics[width = .85\textwidth]{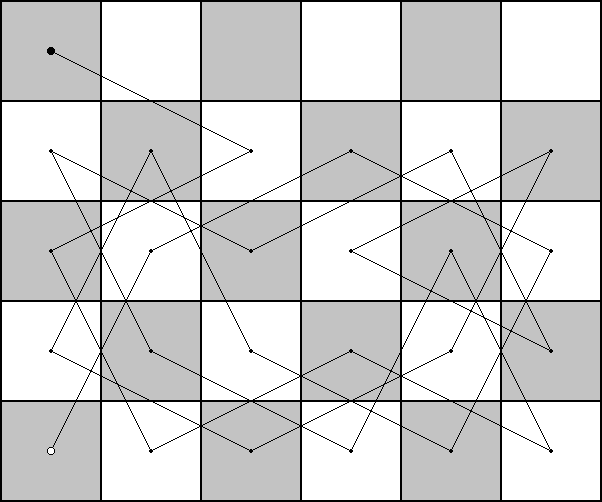}}
\caption{\:}
\end{subfigure}
\qquad
\begin{subfigure}[b]{0.34 \textwidth}
\fbox{\includegraphics[width = .85\textwidth]{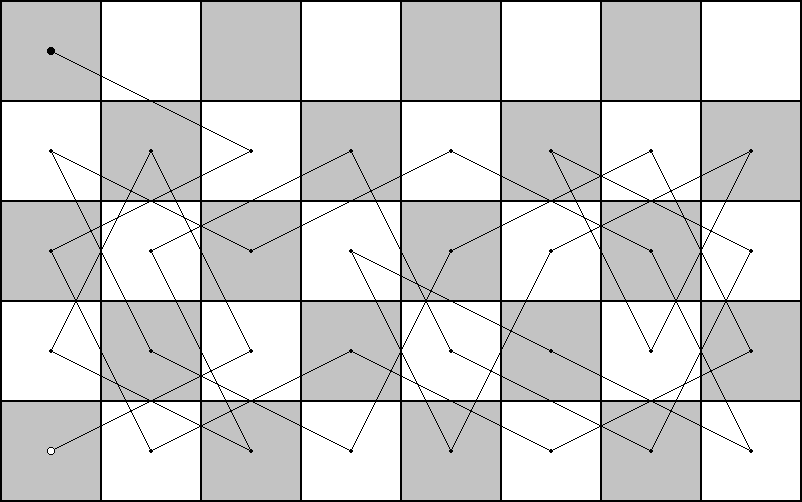}}
\caption{\;\:}
\end{subfigure}

\caption{Paths in $\Sc{6}$ and $\Sc{8}$ used in the induction argument for the $6 \times n$, $8 \times n$ case above.}
\label{fig:MSGen68xnInductiion}
\end{figure}

\subsection*{{\bf $m \times n$} when $m \geq 7$ and $m$ is odd}  When $n$ is even, there is a generating tour given by applying Corollary \ref{cor:widen} to the tours constructed inductively for $\Ma{3}{n}$ and $\Ma{5}{n}$.  When $n$ is odd, there exists an open knight's tour from $(0,0)$ to $(m-3,n-1)$ on $\Ma{m}{n}$ by Theorem \ref{thm:ralston}.  Concatenating this with the edge from $(m-3,n-1)$ to $(m-1,n) = (0,0)$ gives a generating tour on $\Ma{m}{n}.$

\subsection*{{\bf $m \times n$} when $m \geq 10$ and $m$ is even}  By Proposition \ref{prop:oddeven}, there is no generating tour on $\Ma{m}{n}$ when $n$ is even.  When $n$ is odd, there is a generating tour on $\Ma{m}{n}$ given by Corollary \ref{cor:widen} and the tours constructed inductively for $\Ma{6}{n}$ and $\Ma{8}{n}$.

This completes the proof of Theorem \ref{thm:MSGen}.

\section{\bf Nullhomotopic, Cylindrical, and M\"{o}bius Tours on Klein Bottles}

We now turn to Klein bottles, extending the results of the previous two sections. In this section, we classify the dimensions of Klein bottle boards that admit a nullhomotopic tour, the dimensions that admit a cylindrical tour, and the dimensions that admit a M\"{o}bius tour.  These classifications are given in Theorems \ref{thm:KBNull}, \ref{thm:KBCyl}, and \ref{thm:KBMob} respectively.

\begin{theorem}
The pseudograph $\Ka{m}{n}$ has a nullhomotopic knight's tour if and only if neither of the following are true:
\begin{itemize}
    \item both $m$ and $n$ are odd and at least one of $m$ and $n$ is greater than 1; or
    \item both $m$ and $n$ are less than or equal to 2 and at least one of $m$ and $n$ is greater than 1.
\end{itemize}
\label{thm:KBNull}
\end{theorem}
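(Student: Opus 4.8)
The plan is to prove both directions by separating the forbidden boards from the rest. For necessity, the first family of exceptions---$m$ and $n$ both odd with at least one exceeding $1$---is handled immediately by statement (1) of Proposition \ref{prop:oddeven}, which already rules out nullhomotopic tours on $\Ka{m}{n}$ in exactly this case. The second family consists only of the three boards $\Ka{1}{2}$, $\Ka{2}{1}$, and $\Ka{2}{2}$ (the pairs with $m,n \le 2$ and at least one greater than $1$). Since these are finite, I would verify the claim by direct inspection: enumerate the very few Hamiltonian cycles of each small multigraph or pseudograph, lift each to $\Pc$ with initial point $\OO$, and check using part (3) of Corollary \ref{cor:cover} that no lift terminates at $\OO$. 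Equivalently, a short coordinate-tracking argument shows that every closed tour on these boards is forced to wrap nontrivially.

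For sufficiency I would first dispose of $\Ka{1}{1}$, which has a single vertex and hence a trivial nullhomotopic tour. For every remaining non-exceptional board at least one of $m,n$ is even, and the key mechanism is that $\Ra{m}{n}$, $\Ca{m}{n}$, and $\Ma{m}{n}$ are all canonically subgraphs of $\Ka{m}{n}$: since the inclusion-induced homomorphisms on fundamental groups send the identity to the identity, any nullhomotopic tour living in one of these subgraphs is automatically a nullhomotopic tour on $\Ka{m}{n}$. Thus Schwenk's Theorem \ref{thm:schwenk}, the cylinder result Theorem \ref{thm:forrest-teehan}, and the M\"obius result Theorem \ref{thm:MSNH} together supply nullhomotopic tours for the overwhelming majority of boards. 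Concretely, Schwenk covers all sufficiently large boards with a suitable even dimension, while Theorem \ref{thm:forrest-teehan} fills in many remaining mid-sized cases---for example $\Ka{3}{4}$ and the boards $\Ka{4}{n}$ with $n$ odd, where the M\"obius strip admits no tour at all but the cylinder does carry a nullhomotopic one.

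The crux of the argument, and the main obstacle, is the collection of thin boards on which none of the three embedded classifications applies: the strips $\Ka{1}{n}$ with $n$ even, the boards $\Ka{2}{n}$ for all $n \ge 3$, and a few isolated corners such as $\Ka{4}{1}$. On each of these the M\"obius and cylinder subgraphs both fail to carry a nullhomotopic tour, so I would construct tours directly in $\Ka{m}{n}$, exploiting the additional edges produced by performing both identifications simultaneously. Following the style of Section 4, I would present a finite set of base cases as lifts to $\Pc$ and confirm nullhomotopy via part (3) of Corollary \ref{cor:cover} by checking that each lift returns to $\OO$; I would then extend to all larger $n$ by an induction that splices in a fixed periodic block (analogous to the $4\times n$ and $3\times n$ inductions already used for M\"obius strips), verifying at each step that the splice leaves the endpoint of the lift at $\OO$ so that nullhomotopy is preserved. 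The delicate point will be ensuring the case division exhausts every non-exceptional board exactly once while respecting the lack of literal $m \leftrightarrow n$ symmetry of $\Ka{m}{n}$, and confirming that the thin-board constructions genuinely close up as nullhomotopic loops rather than as cylindrical or M\"obius ones.
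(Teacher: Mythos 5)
Your overall architecture matches the paper's: necessity from statement (1) of Proposition \ref{prop:oddeven} plus a direct analysis of the three small boards $\Ka{1}{2}$, $\Ka{2}{1}$, $\Ka{2}{2}$ (the paper does this by noting every edge cycle in $\Pc$ has length at least $4$ and then checking the $24$ length-$4$ cycles through $\OO$ for the $2\times 2$ case, which is essentially your ``coordinate-tracking'' argument); sufficiency from the canonical inclusions of $\Ma{m}{n}$ and $\Ca{m}{n}$ into $\Ka{m}{n}$ plus explicit tours for what remains.

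The problem is in your identification of what remains. You assert that the strips $\Ka{1}{n}$ with $n$ even and all boards $\Ka{2}{n}$ with $n\ge 3$ escape all three embedded classifications and therefore require new base cases and a periodic-splice induction, and you leave those constructions entirely unexecuted. In fact Theorem \ref{thm:forrest-teehan} already covers almost all of them: its exceptional list for $\Ca{m}{n}$ is ``$n=1$ and $m>1$'', ``$n=2$'', ``$n=4$ and $m$ even'', and ``both odd with one $>1$'', so $\Ca{1}{n}$ carries a nullhomotopic tour for every even $n\ge 4$ (the $n=4$ clause requires $m$ even, and $m=1$ is odd), and $\Ca{2}{n}$ carries one for every $n\notin\{1,2,4\}$. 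Running the two exceptional lists of Theorems \ref{thm:MSNH} and \ref{thm:forrest-teehan} against each other, the only non-exceptional boards on which both the M\"obius and cylinder subgraphs fail are $\Ka{2}{4}$ and $\Ka{4}{1}$ --- two finite cases, each settled by a single explicit tour (Figure \ref{fig:KBNull} in the paper), with no induction needed. So the heaviest part of your plan addresses boards that do not need it, and the part of your proof that would actually require new work (the infinite thin-board families you list) is both unnecessary and unsupplied. The statement you are proving is true and your method would succeed if carried out, but as written the sufficiency direction rests on an incorrect application of Theorem \ref{thm:forrest-teehan} and omits the constructions it claims are essential; redo the case intersection carefully and the residue collapses to the two sporadic boards.
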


When considering tours on Klein bottles, we can draw on results from both M\"{o}bius strips and cylinders.  Recall that $\Ka{m}{n}$ canonically contains $\Ma{m}{n}$ and so we can apply Theorem \ref{thm:MSNH}.  Additionally, $\Ka{m}{n}$ canonically contains $\Ca{m}{n}$, enabling the application of Theorem \ref{thm:forrest-teehan}.  Together with Proposition \ref{prop:oddeven}, these theorems prove Theorem \ref{thm:KBNull} except for five specific cases.  Figure \ref{fig:KBNull} shows nullhomotopic tours on $\Ka{2}{4}$ and $\Ka{4}{1}$, leaving only the case when both $m$ and $n$ are less than or equal to 2 and at least one of $m$ and $n$ is greater than 1.

\begin{figure}[t]

\centering
\begin{subfigure}[b]{0.25\textwidth}
\fbox{\includegraphics[width = .85\textwidth]{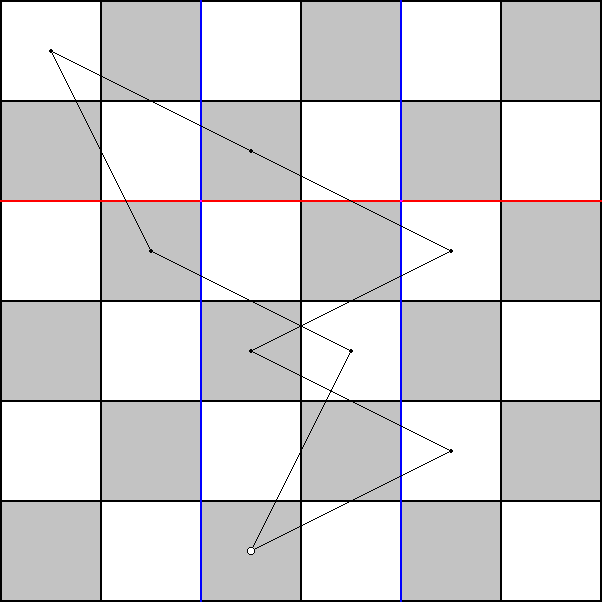}}
\caption{\,}
\end{subfigure}
\qquad
\begin{subfigure}[b]{0.31 \textwidth}
\fbox{\includegraphics[width = .85\textwidth]{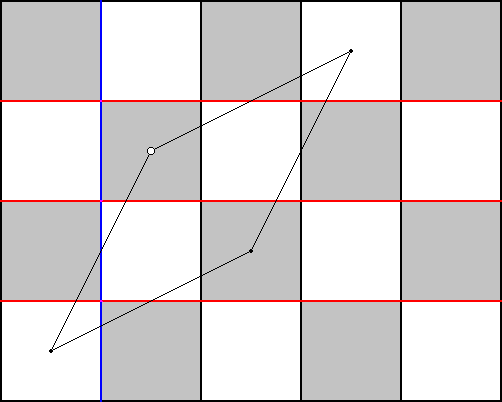}}
\caption{\;\,}
\end{subfigure}
\caption{Lifts of nullhomotopic tours on $\Ka{2}{4}$ and $\Ka{4}{1}$.}
\label{fig:KBNull}
\end{figure}

Every edge cycle in $\Pc$ has at least 4 edges.  Hence, there are no nullhomotopic tours on $\Ka{1}{2}$ or $\Ka{2}{1}$.  Each vertex in $\Pc$ is contained in 24 length 4 edge cycles.  None of those 24 edge cycles containing $(0,0)$ in $\Pc$ map to a knight's tour in $\Ka{2}{2}$, so there is no nullhomotopic tour on $\Ka{2}{2}$.  This completes the proof of Theorem \ref{thm:KBNull}.

\begin{theorem}
The pseudograph $\Ka{m}{n}$ with at least one of $m$ and $n$ greater than one has a cylindrical tour if and only if none of the following are true:
\begin{itemize}
\item $m = 2$ and $n = 2$; or
\item $m$ is odd and $n$ is even.
\end{itemize}
\label{thm:KBCyl}
\end{theorem}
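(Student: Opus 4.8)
The plan is to prove both directions of the biconditional, relying throughout on Corollary \ref{cor:cover}(4): a knight's tour $g$ in $\Ka{m}{n}$ is cylindrical exactly when its lift $\tilde g$ to $\Pc$ beginning at $\OO$ ends at $(\pm m, 0)$. The key structural observation is that $\Ca{m}{n}$ sits canonically inside $\Ka{m}{n}$, and a generating tour on the cylinder (whose lift wraps once in the $x$-direction) is, read inside $\Ka{m}{n}$, precisely a cylindrical tour. Thus Theorem \ref{thm:forrest-teehan2} will supply cylindrical tours on $\Ka{m}{n}$ whenever it guarantees a generating tour on $\Ca{m}{n}$.

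For \emph{necessity}, the two forbidden families are dispatched separately. If $m$ is odd and $n$ is even, Proposition \ref{prop:oddeven}(2) already states that no cylindrical tour exists, so nothing further is needed. For $\Ka{2}{2}$, I would argue by a finite check in the universal cover, exactly as in the $\Ka{2}{2}$ nullhomotopic argument: every knight's tour on $\Ka{2}{2}$ is a length-$4$ cycle, so its lift is one of the finitely many length-$4$ edge paths in $\Pc$ starting at $\OO$ whose image is Hamiltonian, and one verifies that none of these paths terminates at $(\pm 2, 0)$.

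For \emph{sufficiency}, assume that neither $m=n=2$ nor ($m$ odd and $n$ even) holds, and split on $n$. When $n \notin \{1,2,4\}$, the hypothesis that we are not in the case ($m$ odd, $n$ even) is exactly what Theorem \ref{thm:forrest-teehan2} requires, so $\Ca{m}{n}$ carries a generating tour; including this tour into $\Ka{m}{n}$ produces a cylindrical tour by the observation above. This resolves all widths except the three residual values $n=1,2,4$, where the cylinder subgraph provably fails and the extra ($y$-twisted and diagonal) edges of $\Ka{m}{n}$ must be used.

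The remaining cases are $n=1$ for every $m \geq 2$, $n=2$ for even $m \geq 4$, and $n=4$ for even $m$ (odd $m$ being excluded whenever $n$ is even). For each I would exhibit explicit lifts in $\Pc$ of cylindrical tours as base cases --- small even base widths for $n=2,4$, and both parities of $m$ for $n=1$ --- and then induct on $m$, enlarging the board by a local edge replacement that splices a short detour into the tour, increasing the width by $2$ while leaving the lift's terminal point of the form $(\pm m, 0)$; this is the same device used in the $m \times 1$ M\"obius construction. The main obstacle is precisely this battery of small-$n$ constructions: unlike the generic case, they cannot be outsourced to the cylinder results, so one must produce genuine Klein-bottle tours and check that the induction preserves both the Hamiltonian property and the cylindrical homotopy class. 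The finite verification that $\Ka{2}{2}$ admits no cylindrical tour is a secondary, purely computational, hurdle.
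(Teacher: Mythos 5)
Your proposal is correct and follows essentially the same route as the paper: necessity via Proposition \ref{prop:oddeven}(2) together with a finite analysis of lifts for $\Ka{2}{2}$ (the paper uses a slightly slicker mod-$4$ count on the first coordinate rather than full enumeration), and sufficiency by importing generating tours on $\Ca{m}{n}$ from Theorem \ref{thm:forrest-teehan2} for $n \notin \{1,2,4\}$ and then handling $n=1$, $n=2$, and $n=4$ by explicit base tours extended by periodic repetition or splicing in the universal cover. The only substance you have deferred is the battery of explicit small-$n$ constructions, which is exactly where the paper's remaining work lies as well.
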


The majority of the cases in Theorem \ref{thm:KBCyl} have been established by Proposition \ref{prop:oddeven} and by Theorem \ref{thm:forrest-teehan2}.  All that remains to prove Theorem \ref{thm:KBCyl} is to consider the cases when $n = 1$ or when $n = 2$ or $4$ and $m$ is even. 

\subsection*{{\bf $m \times 1$}} Frames (A), (B), and (C) of Figure \ref{fig:KBCGenmx1} show paths on $\Pc$ that map via $\phi_K$ to cylindrical tours on $\Ka{2}{1}$, $\Ka{3}{1}$, and $\Ka{5}{1}$ respectively.  When $m$ is even, we build a path in $\Pc$ that maps to a cylindrical tour on $\Ka{m}{1}$ by repeating the path shown in Frame (A) $m/2$ times with one copy of this path beginning at $(k,0)$ for each even integer $k$ with $0 \leq k < m$.  When $m$ is odd, we build a path inductively by cutting the path on $\Pc$ that maps to $\Ka{m-2}{1}$ at $(1,1)$ and shifting the part of the path occurring after $(1,1)$ two squares to the right.  To connect these, we concatenate each part with the path shown in frame (A) where the white point is placed at $(1,1)$.  When this procedure is applied to the path shown in frame (B) of Figure \ref{fig:KBCGenmx1}, we obtain the path shown in frame (C) of that figure.

\begin{figure}[t]
\centering
\begin{subfigure}[b]{0.15\textwidth}
\fbox{\includegraphics[width = .82\textwidth]{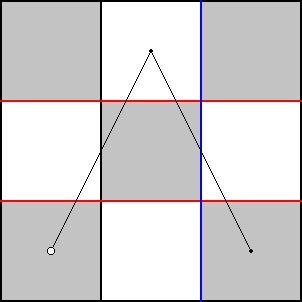}}
\caption{\,}
\end{subfigure}
\qquad 
\begin{subfigure}[b]{0.24\textwidth}
\fbox{\includegraphics[width = .85\textwidth]{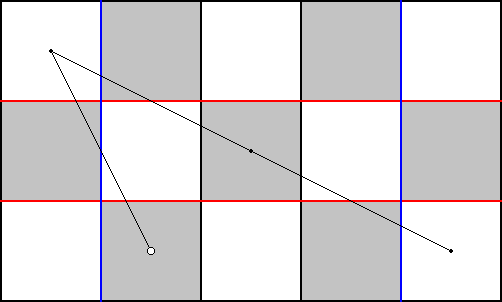}}
\caption{\,}
\end{subfigure}
\qquad
\begin{subfigure}[b]{0.255\textwidth}
\fbox{\includegraphics[width = .85\textwidth]{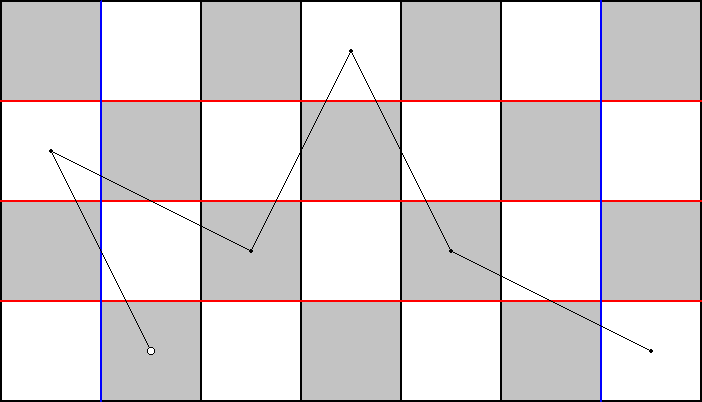}}
\caption{\;\;\;}
\end{subfigure}
\caption{Lifts of cylindrical tours on $\Ka{2}{1}$, $\Ka{3}{1}$, and $\Ka{5}{1}$. The path in frame (A) is used in the induction argument in the $m \times 1$ subsection above.}
\label{fig:KBCGenmx1}
\end{figure}

\begin{figure}[t]

\centering
\begin{subfigure}[b]{0.24 \textwidth}
\fbox{\includegraphics[width = .82\textwidth]{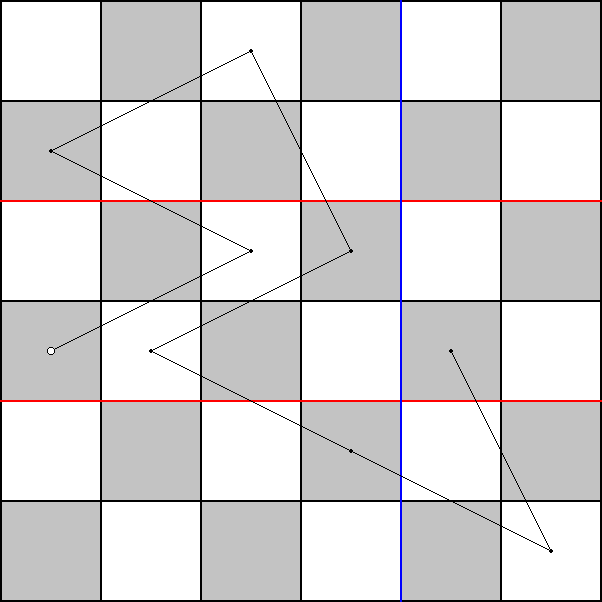}}
\caption{\,}
\end{subfigure}
\quad 
\begin{subfigure}[b]{0.31\textwidth}
\fbox{\includegraphics[width = .85\textwidth]{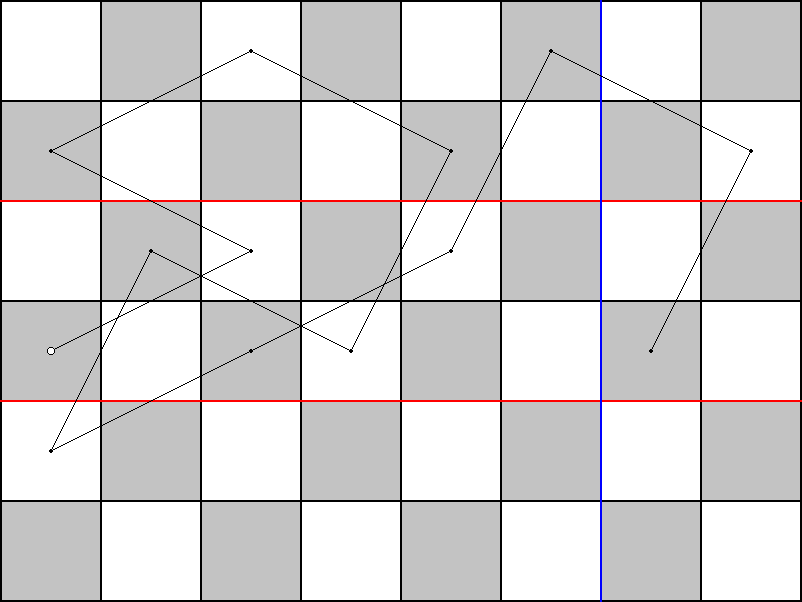}}
\caption{\,}
\end{subfigure}
\quad
\begin{subfigure}[b]{0.31 \textwidth}
\fbox{\includegraphics[width = .85\textwidth]{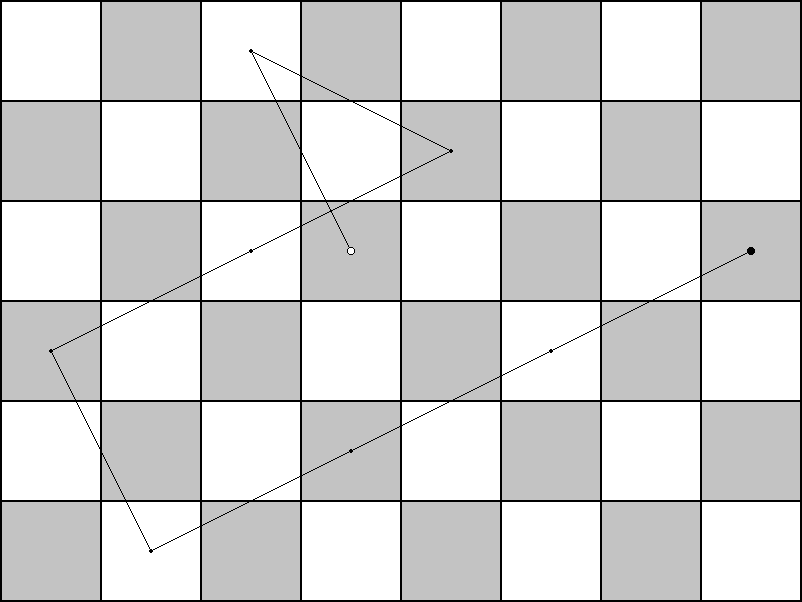}}
\caption{\;\:}
\end{subfigure}

\caption{Lifts of cylindrical tours on $\Ka{4}{2}$ and $\Ka{6}{2}$ are given in frames (A) and (B) respectively. The path in frame (C) is used in the induction argument in the $m \times 2$ subsection above.}
\label{fig:KBCGenmx2}
\end{figure}

\subsection*{{\bf $m \times 2$} when $m$ is even} First consider the case when $m=2$.  Note that $\phi_K(a,b) = \phi_K(a \pm 1, b \pm 2)$ for all vertices $(a,b) \in \Pc$.  Therefore, each edge in a knight's tour on $\Ka{2}{2}$ must lift to an edge in $\Pc$ from $(a,b)$ to $(a \pm 2, b \pm 1)$.  Suppose now that there is a cylindrical tour on $\Ma{2}{2}$ and lift this tour to $\Pc$ so that the initial vertex is $(0,0)$.  By Corollary \ref{cor:cover}, the terminal vertex is $(2,0)$.  Given that four edges are traversed and each edge adds or subtracts 2 to the first coordinate, the possible values of the first coordinate of the terminal vertex of this lift are $0, \pm 4,$ and $\pm 8$, which gives a contradiction. Thus, there is no cylindrical tour on $\Ka{2}{2}$.

Continuing, note that frames (A) and (B) of Figure \ref{fig:KBCGenmx2} show paths on $\Pc$ that map via $\phi_K$ to cylindrical tours on $\Ka{4}{2}$ and $\Ka{6}{2}$ respectively.  When $m > 6$ and $m$ is equivalent to 0 mod 4, we build a path in $\Pc$ that maps to a cylindrical tour on $\Ka{m}{2}$ by repeating the path shown in frame (A) of Figure \ref{fig:KBCGenmx2} $m/4$ times with one copy of this path beginning at $(k,0)$ for each $k$ with $0 \leq k < m$ where $k$ is a multiple of 4. When $m > 6$ and $m$ is equivalent to 2 mod 4, we build a path inductively by cutting the path on $\Pc$ that maps to $\Ka{m-4}{2}$ at $(4,1)$ and shifting the part of the path occurring after $(4,1)$ four squares to the right.  To connect these, we concatenate each part with the path shown in frame (C) of Figure \ref{fig:KBCGenmx2} where the white point is placed at $(4,1)$.

\begin{figure}[t]
\centering
\begin{subfigure}[b]{0.24 \textwidth}
\fbox{\includegraphics[width = .82\textwidth]{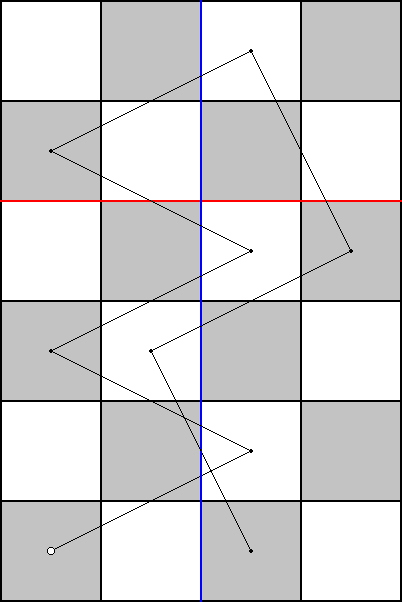}}
\caption{\,}
\end{subfigure}
\caption{Lift of a cylindrical tour on $\Ka{2}{4}$.}
\label{fig:KBCGenmx4}
\end{figure}

\subsection*{{\bf $m \times 4$} when $m$ is even}  Note that Figure \ref{fig:KBCGenmx4} shows a path on $\Pc$ that maps via $\phi_K$ to a cylindrical tour on $\Ka{2}{4}$.  When $m \geq 4$, we repeat this path $m/2$ times, with one copy of this path beginning at $(k,0)$ for each even integer $k$ with $0 \leq k < m$, to build a cylindrical tour on $\Ka{m}{4}$.

This completes the proof of Theorem \ref{thm:KBCyl}.  

\begin{theorem}
The pseudograph $\Ka{m}{n}$ with at least one of $m$ and $n$ greater than one has a M\"{o}bius tour if and only if at least one of $m$ and $n$ is odd. 
\label{thm:KBMob}
\end{theorem}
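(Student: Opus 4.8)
The statement is an ``if and only if,'' and the two directions separate cleanly. For the necessary direction, suppose both $m$ and $n$ are even. Then Proposition \ref{prop:oddeven}(2) already asserts that $\Ka{m}{n}$ admits no M\"{o}bius tour, which is precisely the contrapositive of the ``only if'' direction. This disposes of one direction in a single line, so all of the work lies in producing M\"{o}bius tours whenever at least one of $m$ and $n$ is odd.

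The engine for the sufficient direction is the observation that a generating tour on the M\"{o}bius strip is automatically a M\"{o}bius tour on the Klein bottle, in the same spirit as the earlier remark that a knight's tour on $\Ra{m}{n}$ is a nullhomotopic tour on $\Ma{m}{n}$ and $\Ka{m}{n}$. Concretely, $\Ma{m}{n}$ is a canonical subgraph of $\Ka{m}{n}$ on the same vertex set, and $\Sc{m}$ is a subgraph of $\Pc$ on which $\phi_K$ restricts to $\phi_M$. Hence if $f$ is a generating tour on $\Ma{m}{n}$, its lift $\tilde f$ to $\Sc{m}$, which ends at $(m-1,\pm n)$ by Corollary \ref{cor:cover}(2), is also the lift of $f$ (viewed as a tour on $\Ka{m}{n}$) to $\Pc$; by uniqueness of lifts the terminal vertex is unchanged, so Corollary \ref{cor:cover}(5) identifies $f$ as a M\"{o}bius tour. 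Feeding Theorem \ref{thm:MSGen} into this observation produces a M\"{o}bius tour on $\Ka{m}{n}$ for every board with an odd coordinate on which the M\"{o}bius strip already carries a generating tour. Intersecting the exception list of Theorem \ref{thm:MSGen} with the requirement that $m$ or $n$ be odd (and using the hypothesis that $m$ or $n$ exceed $1$ to drop $\Ka{1}{1}$) leaves exactly the boards $\Ka{1}{n}$ with $n\ge 2$, the boards $\Ka{2}{n}$ and $\Ka{4}{n}$ with $n$ odd, and the finitely many boards $\Ka{3}{1}$, $\Ka{3}{2}$, $\Ka{3}{4}$, and $\Ka{5}{1}$.

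It then remains to settle these leftover boards directly, exploiting the extra edges $\Ka{m}{n}$ possesses over $\Ma{m}{n}$ (the cylinder edges and the doubly-wrapping edges), which is what lets a M\"{o}bius tour exist even where the M\"{o}bius strip has no generating tour at all. The four exceptional boards $\Ka{3}{1}$, $\Ka{3}{2}$, $\Ka{3}{4}$, and $\Ka{5}{1}$ I would dispatch by exhibiting explicit lifts in $\Pc$ that terminate at $(m-1,\pm n)$. For the three infinite families I would argue by induction on $n$: choose a small set of base cases (for $\Ka{1}{n}$ one base case of each parity, for $\Ka{2}{n}$ and $\Ka{4}{n}$ a single smallest odd case), and give a splicing step that extends a M\"{o}bius tour on $\Ka{m}{n}$ to one on $\Ka{m}{n+2}$ by inserting two rows of knight moves, exactly as the vertical inductions for the $3\times n$ and $5\times n$ generating tours were carried out in Section 5.

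The conceptual content is entirely in the first two paragraphs; the main obstacle is the bookkeeping of the last one. Because the M\"{o}bius generator is the vertical loop, each inductive step must add a block of rows while keeping the cycle Hamiltonian \emph{and} increasing the net vertical displacement of the lift from $\pm n$ to $\pm(n+2)$, so that the spliced tour remains of M\"{o}bius type rather than degenerating into a nullhomotopic or multiply-wound class. The width-$1$, width-$2$, and width-$4$ strips are the tight cases here: their geometry is so constrained that $\Ma{m}{n}$ fails to admit even a generating tour, which leaves little slack, so the explicit base-case tours must be chosen with the splicing step already in mind.
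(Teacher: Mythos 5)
Your proposal matches the paper's proof essentially step for step: necessity from Proposition \ref{prop:oddeven}(2), sufficiency for most boards by feeding Theorem \ref{thm:MSGen} through the inclusion $\Ma{m}{n} \subset \Ka{m}{n}$, explicit lifted tours for the four sporadic boards $\Ka{3}{1}$, $\Ka{3}{2}$, $\Ka{3}{4}$, $\Ka{5}{1}$, and row-adding inductions (with the net vertical displacement of the lift tracked, as you note) for the $2\times n$ and $4\times n$ families with $n$ odd. The only divergence is $\Ka{1}{n}$, which the paper dispatches not by a fresh induction but by observing that $\Ka{1}{n}$ coincides with the torus pseudograph $\Ta{1}{n}$ and citing Theorem 6.1 of \cite{forrest-teehan}.
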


The majority of the cases in Theorem \ref{thm:KBMob} have been established by Proposition \ref{prop:oddeven} and Theorem \ref{thm:MSGen}. Since $\Ma{m}{n}$ is a subgraph of $\Ka{m}{n}$, we can apply Theorem \ref{thm:MSGen} in this context.  Further, note that $\Ka{1}{n}$ is canonically equal to the pseudograph $\Ta{1}{n}$ from \cite{forrest-teehan}, and, by Theorem 6.1 of \cite{forrest-teehan}, there is a M\"obius tour on $\Ka{1}{n}$ when $n > 1$.

\begin{figure}[t]

\centering
\begin{subfigure}[b]{0.196\textwidth}
\fbox{\includegraphics[width = .85\textwidth]{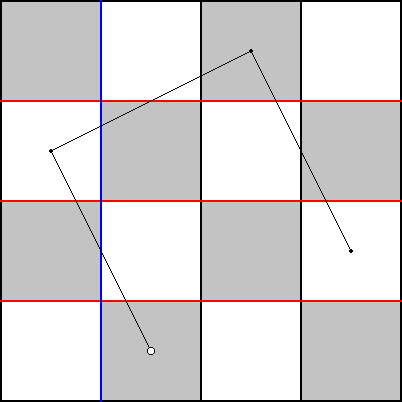}}
\caption{}
\end{subfigure}
\hfill
\begin{subfigure}[b]{0.245\textwidth}
\fbox{\includegraphics[width = .85\textwidth]{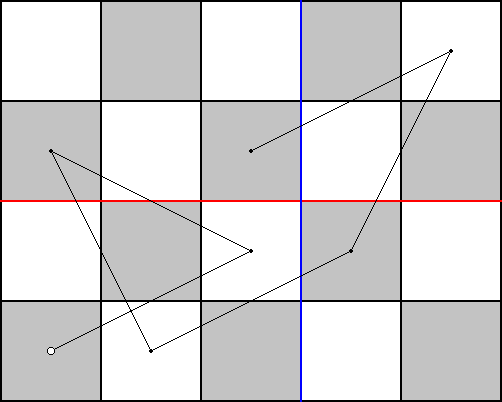}}
\caption{\,}
\end{subfigure}
\hfill
\begin{subfigure}[b]{0.195\textwidth}
\fbox{\includegraphics[width = .85\textwidth]{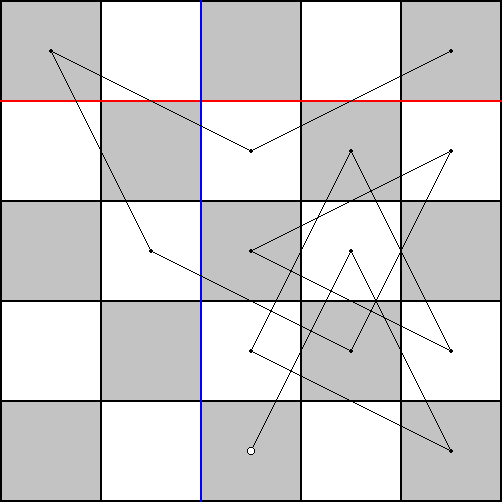}}
\caption{}
\end{subfigure}
\hfill
\begin{subfigure}[b]{0.29\textwidth}
\fbox{\includegraphics[width = .85\textwidth]{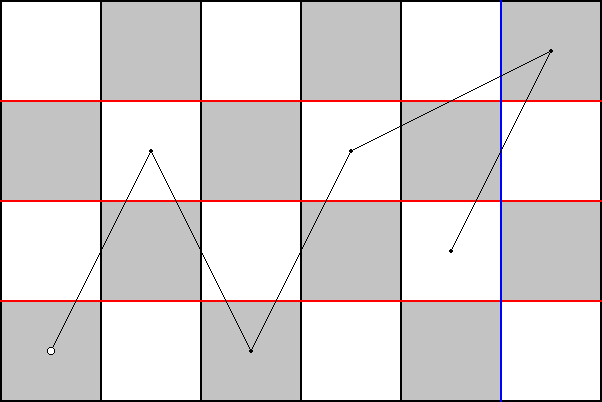}}
\caption{\;}
\end{subfigure}
\caption{Lifts of M\"obius tours on $\Ka{3}{1}$, $\Ka{3}{2}$, $\Ka{3}{4}$, and $\Ka{5}{1}$.}
\label{fig:KBMGenSporadic}
\end{figure}

All that remains in order to prove Theorem \ref{thm:KBMob} is to consider the cases when $m = 2$ or $4$ and $n$ is even along with the sporadic multigraphs $\Ka{3}{1}$, $\Ka{3}{2}$, $\Ka{3}{4}$, and $\Ka{5}{1}$.  M\"obius tours on the four sporadic Klein boards are shown in Figure \ref{fig:KBMGenSporadic}. We consider the other cases below.

\subsection*{{\bf $2 \times n$} when $n$ is odd}

Frames (A) and (B) of Figure \ref{fig:KBMGen2xn} show paths on $\Pc$ that map via $\phi_K$ to M\"obius tours on $\Ka{2}{1}$ and $\Ka{2}{3}$ respectively.  For $n > 3$, we build a tour inductively by concatenating the path on $\Pc$ that maps to a tour on $\Ka{2}{n-2}$ with the path shown in frame (C) of Figure \ref{fig:KBMGen2xn}, where the white vertex in frame (C) has coordinates $(1,n-2)$.  Hence, $\Ka{2}{n}$ admits a M\"obius tour for all odd $n$.

\begin{figure}[t]
\centering
\begin{subfigure}[b]{0.26\textwidth}
\fbox{\includegraphics[width = .85\textwidth]{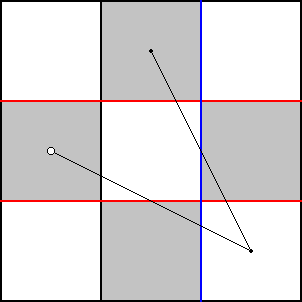}}
\caption{\,}
\end{subfigure}
\qquad
\begin{subfigure}[b]{0.21\textwidth}
\fbox{\includegraphics[width = .85\textwidth]{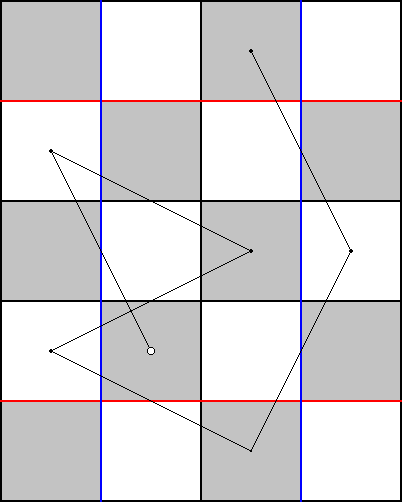}}
\caption{\,}
\end{subfigure}
\qquad
\begin{subfigure}[b]{0.26\textwidth}
\fbox{\includegraphics[width = .85\textwidth]{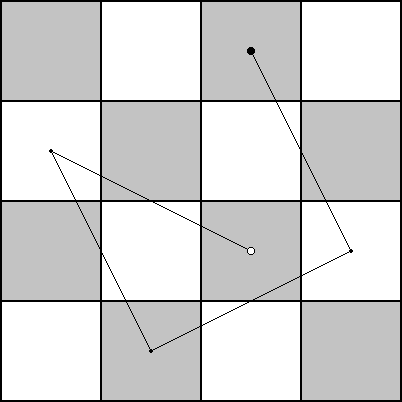}}
\caption{\,}
\end{subfigure}
\caption{Lifts of cylindrical tours on $\Ka{2}{1}$ and $\Ka{2}{3}$ are given in frames (A) and (B) respectively. The path in frame (C) is used in the induction argument in the $2 \times n$ when $n$ is odd subsection above.}
\label{fig:KBMGen2xn}
\end{figure}

\subsection*{{\bf $4 \times n$} when $n$ is odd}

Frames (A), (B), and (C) of Figure \ref{fig:KBMGen4xn} show paths on $\Pc$ that map via $\phi_K$ to M\"obius tours on $\Ka{4}{1}$, $\Ka{4}{3}$, and $\Ka{4}{5}$ respectively. For $n > 5$, we build a path inductively by cutting the path on $\Pc$ that maps to a tour on $\Ka{4}{n-2}$ at $(3,n-2)$ and shifting the part of the path occurring after $(3,n-2)$ two squares upward.  To connect these, we concatenate each part with the path shown in frame (D)  of Figure \ref{fig:KBMGen4xn}, where the white vertex in frame (D) is placed at $(3,n-2)$. Hence, $\Ka{4}{n}$ admits a M\"obius tour for all odd $n$. 

\begin{figure}[t]

\centering
\begin{subfigure}[b]{0.28\textwidth}
\fbox{\includegraphics[width = .85\textwidth]{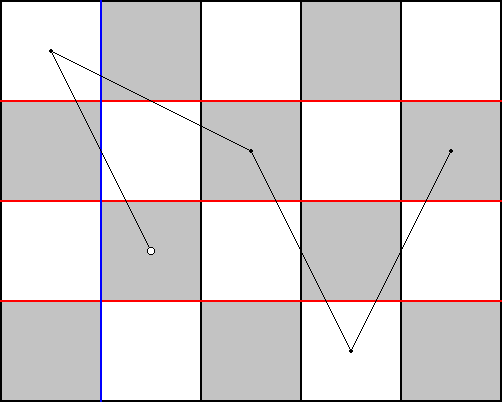}}
\caption{\,}
\end{subfigure}
\hfill
\begin{subfigure}[b]{0.187\textwidth}
\fbox{\includegraphics[width = .85\textwidth]{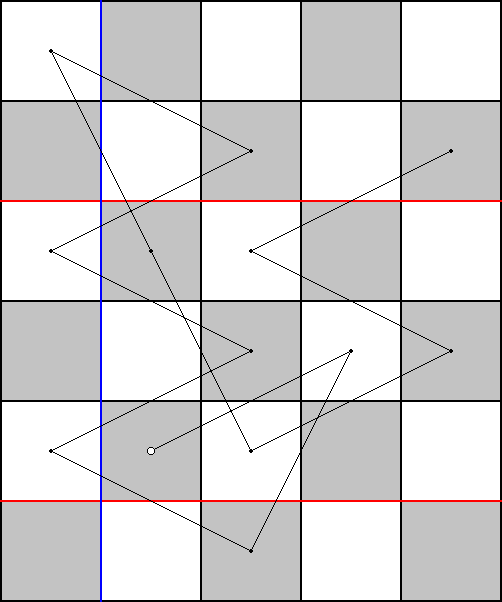}}
\caption{}
\end{subfigure}
\hfill
\begin{subfigure}[b]{0.169\textwidth}
\fbox{\includegraphics[width = .85\textwidth]{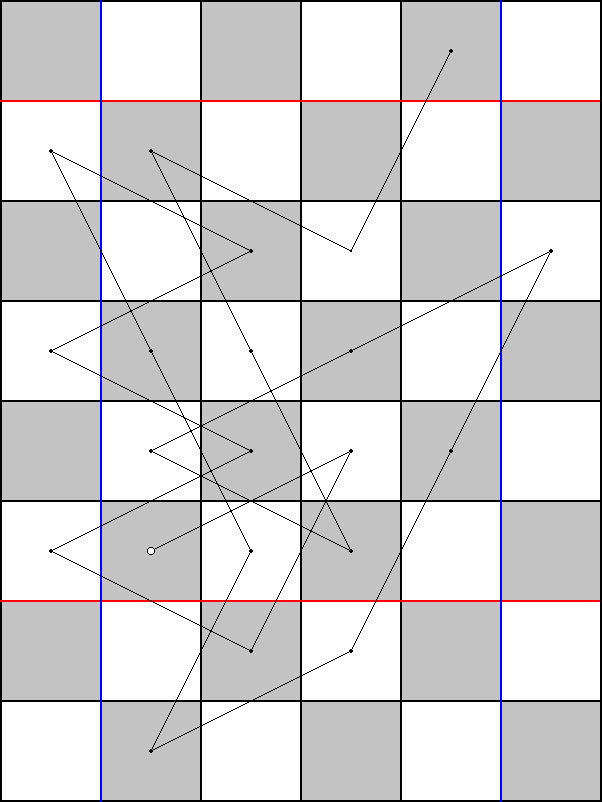}}
\caption{}
\end{subfigure}
\hfill
\begin{subfigure}[b]{0.225\textwidth}
\fbox{\includegraphics[width = .85\textwidth]{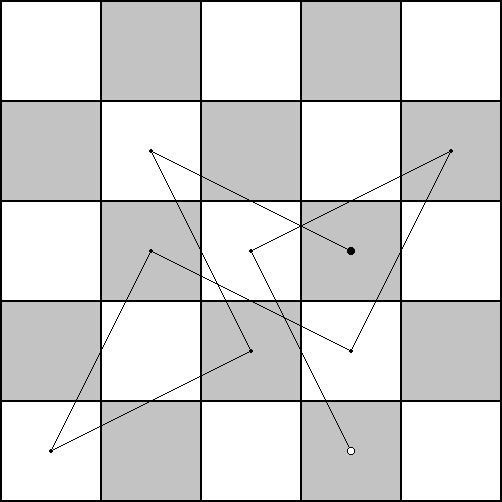}}
\caption{\,}
\end{subfigure}
\caption{Lifts of cylindrical tours on $\Ka{4}{1}$, $\Ka{4}{3}$ and $\Ka{4}{5}$ are given in frames (A), (B), and (C) respectively. The path in frame (D) is used in the induction argument in the $4 \times n$ when $n$ is odd subsection above.}
\label{fig:KBMGen4xn}
\end{figure}

This completes the proof of Theorem \ref{thm:KBMob}.

\newpage
\section{\bf Future Work}

The cylinder, M\"obius strip, torus, and Klein bottle are the compact surfaces that can be obtained as a quotient of an $m \times n$ rectangle that have a homogenous Euclidean metric.  Together with the results in \cite{forrest-teehan}, this work answers for each surface which dimensions of rectangle admit knight's tours that are nullhomotopic and which dimensions admit knight's tours that are homotopic to the standard loop in each direction. This is the initial step in determining, given a surface, which elements of the fundamental group of the surface can be realized by knight's tours.  For example, the fundamental group of both the cylinder and M\"{o}bius strip is isomorphic to $\mathbb{Z}$; what is the largest value in these fundamental groups that can be realized as a knight's tour?  Aside from the questions answered here and in \cite{forrest-teehan}, nothing is known regarding the interaction between the fundamental groups of surfaces and knight's tours.

\bigskip
\newcommand{\doi}[1]{\href{https://doi.org/#1}{Crossref}}
\newcommand{\arXiv}[1]{\href{https://arxiv.org/abs/#1}{arXiv}}
\newcommand{\biblink}[2]{\href{#1}{#2}}
\renewcommand{\MR}[1]{\href{https://mathscinet.ams.org/mathscinet-getitem?mr=#1}{\mbox{MathSciNet}}}

\bibliographystyle{plain}

\begin{thebibliography}{9} 

\smallskip

\bibitem{yang-zhu-jiang-huang}
S.~Bai, X.~Yang, G.~Zhu, D.~Jiang, and J.~Huang, Generalized knight's tour on 3D chessboards. \textit{Discrete Applied Mathematics} \textbf{158.16} (2010): 1727--1731. \doi{10.1016/j.dam.2010.07.009}


\smallskip

\bibitem{chia-ong}
G.~L.~Chia and S.~Ong, Generalized Knight's Tour on Rectangular Chessboards. \textit{J. Discrete Applied Mathematics} \textbf{150} (2005): 80--98.  \doi{10.1016/j.dam.2004.11.008}

\smallskip

\bibitem{demaio}
J.~DeMaio, Which Chessboards Have a Closed Knight's Tour on a Cube? \textit{Electronic J. Combinatorics} \textbf{14} (2007).  \doi{10.37236/950}

\smallskip

\bibitem{demaio-mathew}
J.~DeMaio and B.~Mathew, Which Chessboards have a Closed Knight's Tour within the Rectangular Prism? \textit{Electronic J. Combinatorics} \textbf{18.1} (2011).  \doi{10.37236/495}

\smallskip

\bibitem{erde-golenia-golenia}
J.~Erde, B.~Golenia, and S.~Golenia, The Closed Knight Tour Problem in Higher Dimensions. \textit{Electronic J. Combinatorics} \textbf{19.4} (2012). \arXiv{1202.5291} \doi{10.37236/2272}

\smallskip

\bibitem{forrest-teehan}
B.~Forrest and K.~Teehan, The Topology of Knight’s Tours on Surfaces. \textit{Minnesota Journal of Undergraduate Mathematics} \textbf{4.1} (2018). \arXiv{1507.02917} Retrieved from https://pubs.lib.umn.edu/index.php/mjum/article/view/4146 

\smallskip

\bibitem{ralston}
A.~Ralston, Knight's Tours on Odd Boards. \textit{J. Recreational Math} \textbf{28.3} (1996): 194--200. 

\smallskip

\bibitem{schwenk}
A.~Schwenk, Which Rectangular Chessboards have a Knight's Tour? \textit{Mathematics Magazine} \textbf{64.5} (1991): 325--332. \doi{10.2307/2690649}
    
\smallskip    
    
\bibitem{watkins}
J.~Watkins, \textit{Across the Board: The Mathematics of Chessboard Problems}. Princeton University Press, 2004. \doi{10.1515/9781400840922}

\smallskip    
    
\bibitem{watkins-hoenigman}
J.~Watkins and R.~Hoenigman, Knight's Tours on a Torus. \textit{Mathematics Magazine} \textbf{70.3} (1997): 175--184. \doi{10.2307/2691257}

\end{thebibliography}

\end{document}